\numberwithin{equation}{section} 
\theoremstyle{plain}
\newtheorem{thm}{Theorem}[section] 
\newtheorem{cor}[thm]{Corollary}
\newtheorem{prop}[thm]{Proposition}
\newtheorem{lem}[thm]{Lemma}
\newtheorem*{mainthm}{Theorem A}
\newtheorem*{mainthm2}{Theorem B}
\theoremstyle{definition} 
\newtheorem{defn}[thm]{Definition}
\newtheorem{lem-defn}[thm]{Lemma-Definition}
\newtheorem{setting}[thm]{Setting}
\newtheorem{eg}[thm]{Example} 
\newtheorem*{notation}{Notation}
\theoremstyle{remark}
\newtheorem{rem}[thm]{Remark}
\newtheorem*{claim}{Claim}
\newtheorem{cln}{Claim}
\newtheorem*{acknowledgement}{Acknowledgments}
\def\ge{\geqslant}
\def\le{\leqslant}
\def\phi{\varphi}
\def\epsilon{\varepsilon}
\def\tilde{\widetilde}
\def\mapsto{\longmapsto}
\def\mod{\operatorname{\,mod}}
\newcommand{\sO}{\mathcal{O}}
\newcommand{\Q}{\mathbb{Q}} 
\newcommand{\C}{\mathbb{C}} 
\newcommand{\R}{\mathbb{R}} 
\newcommand{\Z}{\mathbb{Z}}
\newcommand{\ba}{\mathfrak{a}}
\newcommand{\bb}{\mathfrak{b}}
\newcommand{\m}{\mathfrak{m}}
\newcommand{\q}{\mathfrak{q}}
\newcommand{\WSh}{\mathrm{WSh}}
\newcommand{\WDiv}{\mathrm{WDiv}}
\newcommand{\CDiv}{\mathrm{Div}}
\newcommand{\Diff}{\mathrm{Diff}}
\newcommand{\ACDiff}{\widetilde{\mathrm{Diff}}}
\newcommand{\adj}{\mathrm{adj}}
\newsavebox{\circlebox}
\savebox{\circlebox}{\fontencoding{OMS}\selectfont\Large\char13}
\newlength{\circleboxwdht}
\def\Hom{\operatorname{Hom}}
\def\Spec{\operatorname{Spec}}
\def\Supp{\operatorname{Supp}}
\def\Div{\operatorname{div}}
\def\Exc{\operatorname{Exc}}
\def\ord{\operatorname{ord}}
\title{Deformations of log terminal and semi log canonical singularities}
\author{Kenta Sato}
\address{Faculty of Mathematics, Kyushu University, 744 Motooka, Nishi-ku, Fukuoka 819-0395, Japan}
\email{ksato@math.kyushu-u.ac.jp}
\author{Shunsuke Takagi}
\address{Graduate School of Mathematical Sciences, University of Tokyo, 3-8-1 Komaba, Meguro-ku, Tokyo 153-8914, Japan}
\email{stakagi@ms.u-tokyo.ac.jp}
\keywords{log terminal singularity, semi log canonical singularity, deformation}
\subjclass[2020]{14B05, 14B07, 14F18, 14J17}
\dedicatory{Dedicated to Professor Yujiro Kawamata on the occasion of his seventieth birthday.}
\begin{document}

\begin{abstract}
In this paper, we prove that klt singularities are invariant under deformations if the generic fiber is $\Q$-Gorenstein. 
We also obtain a similar result for slc singularities. 
These are generalizations of results of Esnault-Viehweg and S.~Ishii. 
\end{abstract}

\maketitle
\markboth{K.~SATO and S.~TAKAGI}{DEFORMATIONS OF LOG TERMINAL AND SEMI LOG CANONICAL SINGULARITIES}

\tableofcontents

\section{Introduction}

For the purposes of this introduction, we work over the field $\C$ of complex numbers. 
Kawamata log terminal (klt, for short) and log canonical (lc, for short) singularities are important classes of singularities in the minimal model program. 
Esnault-Viehweg \cite{EV} (resp.~S.~Ishii \cite{ish}, \cite{ish2}) proved that two-dimensional klt (resp.~lc) singularities are invariant under small deformations. Unfortunately, an analogous statement fails in higher dimensions, because the general fibers are not necessarily $\Q$-Gorenstein even if the special fiber is klt or lc. In this paper, we give a generalization of their results, using the theory of non-$\Q$-Gorenstein singularities initiated by de Fernex-Hacon \cite{dFH}. 
Our results are not just a formal generalization to the non-$\Q$-Gorenstein setting, but provide a new interpretation of the results of Esnault-Viehweg and Ishii.  

Let $X$ be a normal variety that is not necessarily $\Q$-Gorenstein. 
de Fernex-Hacon \cite{dFH} defined the pullback $f^*D$ of a (non-$\Q$-Cartier) Weil divisor $D$ on $X$, which is a higher-dimensional analog of Mumford's numerical pullback. 
By using this pullback, two relative canonical divisors $K^+_{Y/X}=K_Y+f^*(-K_X)$ and $K^-_{Y/X}=K_Y-f^*K_X$ are defined for every proper birational morphism $f:Y \to X$ from a normal variety $Y$. 
They coincide if $X$ is $\Q$-Gorenstein, but are different in general. 
We say that $X$ has only valuatively klt singularities (resp.~klt singularities in the sense of de Fernex-Hacon) if every coefficient of the $\R$-Weil divisor $K^+_{Y/X}$ (resp.~$K^-_{Y/X}$) is greater than $-1$ for any $f:Y \to X$.\footnote{Valuatively klt singularities are called lt${}^+$ singularities in \cite{Chi} and \cite{CU}. Klt singularities in the sense of de Fernex-Hacon are called klt type singularities in \cite{BGLM}.} 
These singularities are a natural generalization of classical klt singularities to the non-$\Q$-Gorenstein setting, and being valuatively klt is a weaker condition than being klt in the sense of de Fernex-Hacon, because $K^+_{Y/X} \ge K^-_{Y/X}$. 
Klt singularities in the sense of de Fernex-Hacon are known not to be invariant under small deformations (cf.~\cite{Si}), and therefore, we focus on valuatively klt singularities in this paper. 
Our first main result is the inversion of adjunction for valuatively klt singularities, which states that if a Cartier divisor $D$ on $X$ is valuatively klt, then the pair $(X,D)$ is valuatively plt near $D$. 
Here, valuatively plt pairs are a generalization of plt pairs, defined in terms of $K^+_{Y/X}$ as in the case of valuatively klt singularities (see Definition \ref{+ discrepancy} for its precise definition). 
The proof is based on a characterization of valuatively klt singularities in terms of classical multiplier ideal sheaves.  
As its corollary, we obtain the following result on deformations of valuatively klt singularities. 

\begin{mainthm}[Corollary \ref{general klt fibers}]
Let $(\mathcal{X}, \mathcal{D}) \to T$ be a proper flat family of pairs over a complex variety $T$, where $\mathcal{D}$ is an effective $\Q$-Weil divisor on a normal variety $\mathcal{X}$. 
Suppose that some closed fiber $(\mathcal{X}_{t_0}, \mathcal{D}_{t_0})$ is valuatively klt. 
Then so is a general fiber $(\mathcal{X}_{t}, \mathcal{D}_{t})$. 
In particular, if $(\mathcal{X}_{t}, \mathcal{D}_{t})$ is log $\Q$-Gorenstein, then it is klt. 
\end{mainthm}
Theorem A says that klt singularities deform to klt singularities if the general fibers are $\Q$-Gorenstein. 
Note that the total space $\mathcal{X}$ is not (log) $\Q$-Gorenstein in general, and therefore, the classical inversion of adjunction for klt singularities cannot be applied directly even if $T$ is a smooth curve and the fibers are $\Q$-Gorenstein. 
We also remark that the result of Esnault-Viehweg immediately follows from Theorem A, because valuatively klt singularities and classical klt singularities coincide in dimension two (cf.~Lemma \ref{2-dim val lc}). 

Next we discuss deformations of lc singularities. 
Ishii \cite{ish} proved that isolated lc singularities are invariant under small deformations if the general fibers are $\Q$-Gorenstein. 
The condition of isolated singularities are essential in her proof, and in order to remove this condition, we use the notion of valuatively lc singularities, which are defined in a similar way to the valuatively klt case. 
The second main result of this paper proves that if a Cartier divisor $D$ on $X$ is lc, then the pair $(X, D)$ is valuatively lc. 
For the proof, we introduce new variants of Fujino's non-lc ideal sheaves and show that these ideal sheaves behave well under the restriction to a Cartier divisor. 
Then we employ a strategy similar to the klt case but use the variants of Fujino's non-lc ideal sheaves instead of multiplier ideal sheaves. 
We also prove an analogous result for slc singularities, a generalization of lc singularities to the non-normal setting, under a mild additional assumption (see Theorem \ref{slc cut} for details). 
In this case, we use the theory of AC-divisors to deal with divisors on non-normal varieties. 
Since we do not know a suitable reference, the details of the theory are given in the appendix. 
As a corollary of our second main result, we obtain the following generalization of the result of Ishii. 

\begin{mainthm2}[Corollaries \ref{slc proper} and \ref{general lc fibers}]
Let $(\mathcal{X}, \mathcal{D}) \to T$ be a proper flat family of pairs over a smooth complex curve $T$, where $\mathcal{D}$ is an effective $\Q$-Weil divisor on a normal variety $\mathcal{X}$. 
Suppose that some closed fiber $(\mathcal{X}_{t_0}, \mathcal{D}_{t_0})$ is slc. 
Then a general fiber $(\mathcal{X}_t, \mathcal{D}_t)$ is valuatively lc. 
In particular, if $(\mathcal{X}_t, \mathcal{D}_t)$ is log $\Q$-Gorenstein, then it is lc. 
\end{mainthm2}

When the general fibers are log $\Q$-Gorenstein, Theorem B was independently proved by Koll\'ar \cite[Theorem 5.33]{Kol2}, whose method can be traced back to his joint work \cite[Corollary 5.5]{KSB} with Shepherd-Barron, but the proof heavily depends on the existence of lc modifications. 
We believe that our proof, which uses only the vanishing theorem for lc singularities, is of independent interest. 

\begin{acknowledgement}
The first author was partially supported by JSPS KAKENHI Grant Number 20K14303 and the second author was partially supported by JSPS KAKENHI Grant Numbers 16H02141, 17H02831 and 20H00111. 
They are grateful to Shihoko Ishii and J\'anos Koll\'ar for their helpful comments on a preliminary version of this paper. 
The second author would like to thank Osamu Fujino, Yoshinori Gongyo, Yujiro Kawamata and Keiji Oguiso for valuable conversations. 
\end{acknowledgement}

\begin{notation}
Throughout this paper, all rings are assumed to be commutative and with unit element and all schemes are assumed to be Noetherian and separated. 
\end{notation}

\section{Preliminaries}

This section provides preliminary results needed for the rest of the paper. 

\subsection{Singularities in MMP}
In this subsection, we recall the definition and basic properties of singularities in minimal model program (or MMP for short). 

Throughout this subsection, unless otherwise stated, $X$ denotes an excellent normal integral $\Q$-scheme with a dualizing complex $\omega_X^\bullet$. 
The \textit{canonical sheaf} $\omega_X$ associated to $\omega_X^{\bullet}$ is the coherent $\sO_X$-module defined as the first nonzero cohomology of $\omega_X^\bullet$.
A \textit{canonical divisor} of $X$ associated to $\omega_X^\bullet$ is any Weil divisor $K_X$ on $X$ such that $\sO_X(K_X) \cong \omega_X$. 
We fix a canonical divisor $K_X$ of $X$ associated to $\omega_X^\bullet$, and given a proper birational morphism $\pi:Y \to X$ from a normal integral scheme $Y$, we always choose a canonical divisor $K_Y$ of $Y$ that is associated to $\pi^! \omega_X^{\bullet}$ and coincides with $K_X$ outside the exceptional locus $\mathrm{Exc}(f)$ of $f$. 

\begin{defn}
A proper birational morphism $f: Y \to X$ from a regular integral scheme $Y$ is said to be a \textit{resolution of singularities} of $X$. 
When $\Delta$ is a $\Q$-Weil divisor on $X$ and $\ba \subseteq \sO_X$ is a nonzero coherent ideal sheaf,  
a resolution $f:Y \to X$ is said to be a \textit{log resolution} of $(X, \Delta, \ba)$ if $\ba \sO_Y = \sO_Y(-F)$ is invertible 
and if the union of the exceptional locus $\mathrm{Exc}(f)$ of $f$, the support of $F$ and the strict transform $f^{-1}_*\Delta$ of $\Delta$ is a simple normal crossing divisor.  
Log resolutions exist for quasi-excellent $\Q$-schemes (see \cite{Tem}). 
\end{defn}

First, we recall the definition of singularities in MMP. 
\begin{defn}\label{mmp}
Suppose that 
$\Delta$ is an effective $\Q$-Weil divisor on $X$ such that $K_X+\Delta$ is $\Q$-Cartier, $\ba \subseteq \sO_X$ is a nonzero coherent ideal sheaf and $\lambda > 0$ is a real number. 
\begin{enumerate}[label=(\roman*)]
\item 
Given a proper birational morphism $f:Y \to X$ from a normal integral scheme $Y$, we define the $\Q$-Weil divisor $\Delta_Y$ on $Y$ as   
\[
\Delta_Y : = f^*(K_X+\Delta)-K_Y.
\]
When $\ba \sO_Y=\sO_Y(-F)$ is invertible, 
the \textit{discrepancy} $a_{E}(X, \Delta, \ba^\lambda)$ of the triple $(X, \Delta, \ba^\lambda)$ with respect to a prime divisor $E$ on $Y$ is defined as the coefficient of $E$ in $-(\Delta_Y+\lambda F)$. 
\item The triple $(X, \Delta, \ba^\lambda)$ is said to be \textit{log canonical} (or \textit{lc} for short) if $a_E(X, \Delta, \ba^{\lambda}) \ge -1$ for every proper birational morphism $f:Y \to X$ from a normal integral scheme $Y$ with $\ba \sO_Y$ invertible and for every prime divisor $E$ on $Y$. 
\end{enumerate}
\end{defn}

\begin{defn}\label{defn plt}
Suppose that $\Delta$ is an effective $\Q$-Weil divisor on $X$, $\ba \subseteq \sO_X$ is a nonzero coherent ideal sheaf and $\lambda > 0$ is a real number.
Let $D$ be a reduced Weil divisor on $X$ which has no common components with $\Delta$ and none of whose generic points lies in the zero locus of $\ba$. 
Assume in addition that $K_X+\Delta+D$ is $\Q$-Cartier.
\begin{enumerate}[label=(\roman*)]
\item The triple $(X,\Delta+D, \ba^\lambda)$ is said to be \textit{purely log terminal} (or \textit{plt} for short) along $D$ if $a_E(X, \Delta+D, \ba^{\lambda}) > -1$ for every proper birational morphism $f:Y \to X$ from a normal integral scheme $Y$ with $\ba \sO_Y$ invertible and for every prime divisor $E$ on $Y$ that is not an irreducible component of the strict transform $f^{-1}_*D$  of $D$. 
\item The \textit{adjoint ideal sheaf} $\adj_D(X, \Delta+D, \ba^{\lambda})$ associated to $(X,\Delta+D, \ba^\lambda)$ along $D$ is defined as
\[
\adj_D(X, \Delta+D, \ba^\lambda):= \bigcap_{f: Y \to X} f_* \sO_Y(-\lfloor (\Delta+D)_Y-f^{-1}_*D + \lambda F \rfloor),
\]
where $f: Y \to X$ runs through all proper birational morphisms from a normal integral scheme $Y$ with $\ba \sO_Y= \sO_Y(-F)$ invertible. 
\item Assume that $K_X+\Delta$ is $\Q$-Cartier. 
The triple $(X, \Delta, \ba^\lambda)$ is said to be \textit{Kawamata log terminal} (or \textit{klt} for short) if it is plt along the zero divisor. 
The adjoint ideal sheaf $\adj_0(X, \Delta, \ba^\lambda)$ is called the \textit{multiplier ideal sheaf} associated to $(X, \Delta, \ba^\lambda)$ and is denoted by $\mathcal{J}(X,\Delta,\ba^\lambda)$.
\end{enumerate}
\end{defn}

\begin{rem}[\textup{cf.~\cite[9.3.E]{Laz}}]
Let $(X,\Delta, \ba^\lambda)$ and $D$ be as in Definition \ref{defn plt}.
\begin{enumerate}[label=(\roman*)]
\item $(X, \Delta+D, \ba^\lambda)$ is plt along $D$ if and only if $\adj_D(X, \Delta+D, \ba^\lambda)=\sO_X$.
\item If $f: Y \to X$ is a log resolution of $(X, \Delta+D, \ba)$ separating the components of $D$, then 
\[
\adj_D(X, \Delta+D, \ba^\lambda) = f_* \sO_Y(- \lfloor (\Delta+D)_Y -f^{-1}_*D + \lambda F \rfloor). 
\]
\end{enumerate}
\end{rem}

Next, we introduce a generalization of the singularities in Definitions \ref{mmp} and \ref{defn plt} to the non-$\Q$-Gorenstein setting.  
\begin{defn}[\textup{\cite[Section 2]{dFH}}]\label{dFH valuation}
Suppose that 
$f : Y \to X$ is a proper birational morphism from a normal integral scheme $Y$ and $E$ is a prime divisor on $Y$. 
The discrete valuation associated to $E$ is denoted by $\ord_E$. 
\begin{enumerate}[label=(\roman*)]
\item  The \textit{natural valuation} $\ord_E^{\natural}(D)$ along $\ord_E$ of a Weil divisor $D$ on $X$ is defined as the integer 
\[
\ord_E^{\natural}(D) : = \ord_E( \sO_X(-D)).
\]
The \textit{natural pullback} of $D$ on $Y$ is the Weil divisor
\[
f^{\natural}(D) : = \sum_E \ord_E^{\natural}(D)E,
\]
where $E$ runs through all prime divisors on $Y$.

\item The \textit{valuation} $\ord_E(D)$ along $\ord_E$ of a $\Q$-Weil divisor $D$ on $X$ is defined as the real number 
\[
\ord_E(D) : = \lim_{m \to \infty} \frac{\ord_E^{\natural}(mD)}{m} = \inf_{m \ge 1} \frac{\ord_E^{\natural}(mD)}{m},
\]
where the limit is taken over all integers $m \ge 1$ such that $m D$ is an integral Weil divisor.
This limit always exists by \cite[Lemma 1.4]{Mus}.
The \textit{pullback} of $D$ on $Y$ is the $\R$-Weil divisor
\[
f^{*}(D) : = \sum_E \ord_E (D)E,
\]
where $E$ runs through all prime divisors on $Y$.
\end{enumerate}
\end{defn}

\begin{defn}\label{+ discrepancy}
Suppose that 
$\Delta$ is an effective $\Q$-Weil divisor on $X$, $\ba \subseteq \sO_X$ is a nonzero coherent ideal sheaf and $\lambda > 0$ is a real number. 
Let $m>0$ be an integer such that $m \Delta$ is an integral Weil divisor and $D$ be a reduced Weil divisor on $X$ which has no common components with $\Delta$ and none of whose generic points lies in the zero locus of $\ba$. 
\begin{enumerate}[label=(\roman*)]
\item Let $f: Y \to X$ be a proper birational morphism from a normal integral scheme $Y$ with $\ba \sO_Y=\sO_Y(-F)$ invertible and let $E$ be a prime divisor on $Y$. 
The \textit{$m$-th limiting discrepancy} of $(X,\Delta, \ba^\lambda)$ is defined as 
\begin{align*}
a_{m, E}^{+} (X, \Delta, \ba^\lambda) &= \ord_E(K_Y -\lambda F) + \frac{\ord_E^{\natural}(-m (K_X+\Delta))}{m}.
\end{align*}
The \textit{discrepancy} of $(X,\Delta, \ba^\lambda)$ is defined as 
\begin{align*}
a_{E}^{+} (X, \Delta, \ba^\lambda) &= \ord_E(K_Y -\lambda F) + \ord_E(-(K_X+ \Delta)) \\
&= \lim_{n \to \infty} a_{n, E}^{+}(X,\Delta, \ba^\lambda)\\
&= \inf_{n} a_{n, E}^{+}(X,\Delta, \ba^\lambda),
\end{align*}
where the limit and the infimum are taken over all integer $n \ge 1$ such that $n\Delta$ is an integral Weil divisor.
When $\ba=\sO_X$, we simply write $a_{m, E}^{+} (X, \Delta)$ (resp.~$a_{E}^{+} (X, \Delta)$) instead of $a_{m, E}^{+} (X, \Delta, \ba^\lambda)$ (resp.~$a_{E}^{+} (X, \Delta, \ba^\lambda)$). 

\item (\cite{Yo}) We say that $(X, \Delta, \ba^\lambda)$ is \textit{valuatively lc} (resp.~\textit{$m$-weakly valuatively lc}) if $a_{E}^{+}(X, \Delta, \ba^\lambda) \ge -1$ (resp.~$a_{m,E}^{+}(X, \Delta, \ba^\lambda) \ge -1$) for every proper birational morphism $f:Y \to X$ from a normal integral scheme $Y$ with $\ba \sO_Y$ invertible and for every prime divisor $E$ on $Y$. 

\item We say that $(X, \Delta+D, \ba^\lambda)$ is \textit{valuatively plt} (resp.~\textit{$m$-weakly valuatively plt}) along $D$ if $a_{E}^{+}(X, \Delta+D, \ba^\lambda) > -1$ (resp.~$a_{m,E}^{+}(X, \Delta+D, \ba^\lambda) > -1$) for every proper birational morphism $f:Y \to X$ from a normal integral scheme $Y$ with $\ba \sO_Y$ invertible and for every prime divisor $E$ on $Y$ that is not an irreducible component of $f^{-1}_*D$. 

\item We say that $(X, \Delta, \ba^\lambda)$ is \textit{valuatively klt}\footnote{Valuatively klt singularities are called lt$^+$ singularities in \cite{Chi} and \cite{CU}.} (resp.~\textit{$m$-weakly valuatively klt}) if it is valuatively plt (resp.$m$-weakly valuatively plt) along the zero divisor.
\end{enumerate}
\end{defn}

\begin{rem}\label{valuative implication}
Let $(X,\Delta+D, \ba^{\lambda})$ be as in Definition \ref{+ discrepancy}. 
If $(X, \Delta+D, \ba^{\lambda})$ is valuatively plt along $D$, then $(X, \Delta+D, \ba^{\lambda})$ is valuatively lc and $(X, \Delta, \ba^{\lambda})$ is valuatively klt. 
This follows from the fact that if $D_i$ is an irreducible component of $D$, then 
\[
a^+_{f^{-1}_*D_i}(X, \Delta+D)=a^+_{D_i}(X, \Delta+D)=-1, \quad a^+_{f^{-1}_*D_i}(X, \Delta)=a^+_{D_i}(X, \Delta)=0
\]
 for every proper birational morphism $f:Y \to X$ from a normal integral scheme $Y$.  
\end{rem}

\begin{rem}\label{remark on val lc}
Let $(X,\Delta, \ba^\lambda)$ be as in Definition \ref{+ discrepancy}.
If $K_X+\Delta$ is $\Q$-Cartier, then $(X, \Delta, \ba^{\lambda})$ is lc if and only if it is valuatively lc.
Similarly, if $K_X+\Delta+D$ is $\Q$-Cartier, then the following three conditions are equivalent to each other:
\begin{enumerate}[label=\textup{(\alph*)}]
\item $(X, \Delta+D, \ba^\lambda)$ is plt along $D$,
\item $(X, \Delta+D, \ba^\lambda)$ is valuatively plt along $D$, and 
\item $(X, \Delta+D, \ba^\lambda)$ is $m$-weakly valuatively plt along $D$ for every integer $m \ge 1$ such that $m\Delta$ an integral Weil divisor. 
\end{enumerate}
\end{rem}

\begin{lem}\label{single resolution}
Suppose that $(X,\Delta, \ba^\lambda)$, $m$ and $D$ are as in Definition \ref{+ discrepancy}.
Let $f: Y \to X$ be a log resolution of $(X, \Delta+D, \ba)$ separating the components of $D$. 
\begin{enumerate}[label=$(\arabic*)$]
\item The triple $(X, \Delta, \ba^\lambda)$ is $m$-weakly valuatively lc $($resp.~valuatively lc$)$ if and only if $a_{m,E}^+(X, \Delta, \ba^{\lambda}) \ge -1$ $($resp.~$a_E^+(X, \Delta , \ba^\lambda) \ge -1$$)$ for every prime divisor $E$ on $Y$.
\item $(X, \Delta+ D, \ba^\lambda)$ is $m$-weakly valuatively plt $($resp.~valuatively plt$)$ along $D$ if and only if $a_{m,E}^+(X, \Delta+D, \ba^{\lambda}) > -1$ $($resp.~$a_E^+(X, \Delta +D, \ba^\lambda)>-1$$)$ for every prime divisor $E$ on $Y$ that is not an irreducible component of $f^{-1}_*D$.
\end{enumerate}
\end{lem}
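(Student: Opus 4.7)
The $(\Rightarrow)$ direction in both (1) and (2) is immediate from the definitions by taking $f$ itself. So the task is to propagate the inequalities from prime divisors of $Y$ to those of an arbitrary proper birational model $g\colon Y'\to X$ with $\ba\sO_{Y'}$ invertible. Given any prime divisor $E'$ on such a $Y'$, I would choose a smooth common resolution $W$ dominating both $Y$ and $Y'$ via $p\colon W\to Y$ and $q\colon W\to Y'$ with $\ba\sO_W$ invertible; the strict transform of $E'$ on $W$ shares the valuation of $E'$, so it suffices to verify the inequality at every prime divisor $E$ of $W$. If $E$ is not $p$-exceptional, it is the strict transform of a prime divisor on $Y$ and the assumption applies, so the genuine case is $E$ being $p$-exceptional.

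In this case, the key auxiliary object is the $\Q$-Weil divisor
\[
G_m := K_Y - \lambda F + \tfrac{1}{m}\,f^\natural(-m(K_X+\Delta))
\]
on $Y$, whose coefficient at each prime divisor $Y_i$ of $Y$ is, by construction, $a^+_{m,Y_i}(X,\Delta,\ba^\lambda)$. Using $K_W = p^*K_Y + K_{W/Y}$, $F_W = p^*F$, and the inclusion of fractional ideals $\sO_X(m(K_X+\Delta))\cdot\sO_Y \subseteq \sO_Y(-f^\natural(-m(K_X+\Delta)))$ (an equality at codimension one, since $Y$ is smooth), I would derive
\[
a^+_{m,E}(X,\Delta,\ba^\lambda) = \ord_E(K_{W/Y}) + \ord_E(p^*G_m) + \epsilon_E,
\]
where $\epsilon_E := \tfrac{1}{m}\bigl[\ord_E^\natural(-m(K_X+\Delta)) - \ord_E(p^*f^\natural(-m(K_X+\Delta)))\bigr] \ge 0$. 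A direct computation shows that the non-exceptional $f^{-1}_*K_X$-contributions to $K_Y$ are cancelled by the non-exceptional part of $f^\natural(-m(K_X+\Delta))$, so $\Supp(G_m) \subseteq \Exc(f) \cup \Supp(F) \cup f^{-1}_*\Delta \cup f^{-1}_*D$, which is SNC by assumption. By hypothesis, the coefficients of $-G_m$ are $\le 1$ everywhere (resp.~equal to $1$ precisely on $f^{-1}_*D$ and $< 1$ elsewhere in the plt case), so $(Y,-G_m)$ is classically lc (resp.~plt along $f^{-1}_*D$); hence $\ord_E(K_{W/Y}) + \ord_E(p^*G_m) = a_E(Y,-G_m) \ge -1$ (resp.~$>-1$ when $E$ is not a component of $p^{-1}_*f^{-1}_*D$), and combining with $\epsilon_E \ge 0$ gives the required bound on $a^+_{m,E}$.

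The valuative assertions in (1) and (2) follow from the $m$-weak ones via the identity $a^+_E = \inf_m a^+_{m,E}$: the hypothesis $a^+_{E'}\ge -1$ on prime divisors of $Y$ forces $a^+_{m,E'}\ge -1$ for every $m$, whence the $m$-weak case propagates the inequality to all models and we re-take the infimum. For the strict inequality in the plt case, in order to avoid a margin depending on $m$, I would run the classical step with $G := \sum_{Y_i} a^+_{Y_i}(X,\Delta+D,\ba^\lambda)\,Y_i$ instead of $G_m$, using $G_m \ge G$ to get a uniform bound $a^+_{m,E}\ge a_E(Y,-G)>-1$. The main obstacle is the derivation of the displayed decomposition together with the verification of $\epsilon_E \ge 0$; both are rooted in the fact that $\sO_X(m(K_X+\Delta))$ is only a rank one reflexive sheaf and its image in $\sO_W$ may fail to be divisorial, which is the only place the non-$\Q$-Gorenstein nature of $X$ enters.
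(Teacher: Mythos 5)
Your argument is correct and follows the same route as the paper, which simply cites \cite[Lemma 2.7 and Remark 2.13]{dFH}: the decisive step in both is the nonnegativity of your correction term $\epsilon_E$, which you obtain from the inclusion of fractional ideals $\sO_X(m(K_X+\Delta))\cdot\sO_W \subseteq \sO_W(-p^*f^\natural(-m(K_X+\Delta)))$, valid because $\sO_Y(-f^\natural(\cdot))$ is invertible on the smooth intermediate model $Y$. You also correctly (if tacitly) exploit the hypothesis that $f$ separates the components of $D$ so that $(Y,-G_m)$ is plt and not merely lc along $f^{-1}_*D$, and the uniform-in-$m$ bound via the limit divisor $G$ cleanly closes the valuative case in (2).
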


\begin{proof}
The assertion follows from \cite[Lemma 2.7]{dFH} and \cite[Remark 2.13]{dFH}.
\end{proof}

\begin{prop}\label{singularity at a point}
Suppose that $(X,\Delta, \ba^\lambda)$, $m$ and $D$ are as in Definition \ref{+ discrepancy}.
Let $x \in X$ be a point and let $\Delta_x$ and $D_x$ denote the flat pullbacks of $\Delta$ and $D$ by the canonical morphism $\Spec \sO_{X,x} \to X$, respectively.
\begin{enumerate}[label=$(\arabic*)$]
\item $(X, \Delta, \ba)$ is \textit{valuatively lc at $x$}, that is, $(\Spec \sO_{X,x}, \Delta_x, \ba \sO_{X, x})$ is valuatively lc if and only if there exists an open neighborhood $U \subseteq X$ of $x$ such that $(U, \Delta|_U, \ba|_U)$ is valuatively lc. 
\item $(X, \Delta+D, \ba)$ is \textit{valuatively plt along $D$ at $x$}, that is, $(\Spec \sO_{X,x}, \Delta_x+D_x, \ba \sO_{X, x})$ is valuatively plt along $D_x$ if and only if there exists an open neighborhood $U \subseteq X$ of $x$ such that $(U, \Delta|_U +D|_U, \ba|_U)$ is valuatively plt along $D|_U$.
\end{enumerate}
\end{prop}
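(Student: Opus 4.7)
The strategy is to reduce both the valuatively lc and valuatively plt conditions to a finite check on a single log resolution via Lemma \ref{single resolution}. I will discuss part (1) in detail; part (2) is analogous, with strict transforms of components of $D$ excluded from the discrepancy condition.

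First I would handle the implication ``open neighborhood condition $\Rightarrow$ pointwise condition'' — this is essentially formal. Given any proper birational morphism $g : Z \to \Spec \sO_{X,x}$ with $\ba \sO_Z$ invertible, a standard spreading-out argument produces a proper birational morphism $g_V : Z_V \to V$ over an open neighborhood $V \subseteq U$ of $x$ of which $g$ is the base change; prime divisors on $Z$ correspond to prime divisors on $Z_V$ whose image contains $x$, and the valuations $\ord_E$, $\ord_E^{\natural}$, and hence the discrepancies $a_E^+$, are preserved by flat base change.

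For the converse, I would fix a log resolution $f : Y \to X$ of $(X, \Delta + D, \ba)$ separating the components of $D$. By Lemma \ref{single resolution}, valuatively lc-ness of $(X, \Delta, \ba^\lambda)$ is equivalent to $a_E^+(X, \Delta, \ba^\lambda) \ge -1$ for every prime divisor $E$ on $Y$. The key observation is a finiteness statement: for a non-$f$-exceptional prime divisor $E$ whose image $Z := f(E)$ is not contained in $\Supp(\Delta) \cup \Supp(D) \cup V(\ba)$, a direct local computation — using the description of $\ord_E^{\natural}$ via the reflexive sheaf $\sO_X(m(K_X + \Delta))$ at $\eta_Z$ and the convention $K_Y = K_X$ off $\Exc(f)$ — yields $a_E^+(X, \Delta, \ba^\lambda) = 0$, so such $E$ automatically satisfies the discrepancy inequality. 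Consequently, the set $\mathcal{F}$ of potentially obstructing prime divisors on $Y$ (the $f$-exceptional divisors, the components of $F$ where $\ba \sO_Y = \sO_Y(-F)$, and the strict transforms of components of $\Supp \Delta \cup \Supp D$) is finite.

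To conclude, the base change $Y_x := Y \times_X \Spec \sO_{X,x} \to \Spec \sO_{X,x}$ remains a log resolution of the localized triple, with prime divisors in bijection with those $E \in Y^{(1)}$ satisfying $x \in f(E)$. The hypothesis at $x$, combined with Lemma \ref{single resolution} applied to $Y_x$, forces every $E \in \mathcal{F}$ that violates the discrepancy inequality to satisfy $x \notin f(E)$. The union $\Sigma$ of $f(E)$ over these bad $E \in \mathcal{F}$ is then a closed subset of $X$ (by properness of $f$ and finiteness of $\mathcal{F}$) avoiding $x$; setting $U := X \setminus \Sigma$, a final application of Lemma \ref{single resolution} to $f^{-1}(U) \to U$ delivers the valuatively lc-ness (resp.~plt-ness in (2)) on $U$. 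The main obstacle I anticipate is the discrepancy computation $a_E^+ = 0$ for non-exceptional strict transforms outside the three support loci, which underpins finiteness of $\mathcal{F}$; once this is in hand, the rest is standard spreading-out and closed-set manipulation.
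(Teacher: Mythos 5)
Your proof is correct and uses the same key lemma (Lemma~\ref{single resolution}) as the paper, which disposes of the proposition with the one-line remark that it is an ``immediate application'' of that lemma; you are simply spelling out the details. The essential observation you correctly isolate is that, on a fixed log resolution $f\colon Y\to X$, only the finitely many prime divisors lying in $\Exc(f)\cup\Supp F\cup\Supp f^{-1}_*(\Delta+D)$ can have $a_E^+ < -1$ (all others have $a_E^+=0$, using that codimension-one points of $X$ are regular and $K_Y=K_X$ off the exceptional locus), so the failure locus is a finite union of closed sets $f(E)$ avoiding $x$. One small remark: for the ``open neighborhood $\Rightarrow$ pointwise'' direction you invoke a spreading-out argument for an arbitrary $g\colon Z\to\Spec\sO_{X,x}$, but this is unnecessary once you have Lemma~\ref{single resolution} in hand --- it suffices to base-change the same log resolution $f$ to $\Spec\sO_{X,x}$ and observe that the prime divisors on the base change are exactly those $E$ on $Y$ with $x\in f(E)$, and that discrepancies are preserved; this keeps both directions of the proof on the single fixed resolution.
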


\begin{proof}
This is an immediate application of Lemma \ref{single resolution}.
\end{proof}

\begin{prop}\label{weak sense vs strong sense}
Let $(X,\Delta, \ba^\lambda)$ and $D$ be as in Definition \ref{+ discrepancy}.
Suppose that $I \subseteq \sO_X$ is an coherent ideal sheaf whose zero locus does not contain any generic points of $D$ but contains the locus where $K_X+\Delta+D$ is not $\Q$-Cartier.
Then $(X,\Delta+D, \ba^{\lambda})$ is valuatively plt along $D$ if and only if there exists a real number $\epsilon>0$ such that $(X,\Delta+D, \ba^{\lambda} I^\epsilon)$ is $m$-weakly valuatively plt along $D$ for every integer $m \ge 1$ with $m\Delta$ an integral Weil divisor. 
\end{prop}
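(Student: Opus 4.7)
The plan is to handle both implications by reducing each condition to a finite check on a single log resolution via Lemma \ref{single resolution}, and then separating the prime divisors over $X$ according to whether their center meets the zero locus of $I$. Both directions rely on the elementary identity
\[
a^+_{m,E}(X,\Delta+D,\ba^\lambda I^\epsilon) = a^+_{m,E}(X,\Delta+D,\ba^\lambda) - \epsilon\,\ord_E(I),
\]
which follows directly from Definition \ref{+ discrepancy} on any model where $\ba\cdot I$ is invertible, together with the infimum characterization $a^+_E = \inf_m a^+_{m,E}$.

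For the ``only if'' direction, I would fix a log resolution $f: Y \to X$ of $(X,\Delta+D,\ba I)$ separating the components of $D$. By Lemma \ref{single resolution}(2), the $m$-weakly valuatively plt condition for $(X,\Delta+D,\ba^\lambda I^\epsilon)$ along $D$ only needs to be verified on the finitely many prime divisors $E_1,\dots,E_N$ on $Y$ that are not components of $f^{-1}_*D$. Setting $\delta_i := a^+_{E_i}(X,\Delta+D,\ba^\lambda)+1 > 0$, the displayed identity together with $a^+_{m,E_i} \ge a^+_{E_i}$ gives
\[
a^+_{m,E_i}(X,\Delta+D,\ba^\lambda I^\epsilon) \ge a^+_{E_i}(X,\Delta+D,\ba^\lambda) - \epsilon\,\ord_{E_i}(I),
\]
so any positive $\epsilon < \min\{\delta_i/\ord_{E_i}(I) : \ord_{E_i}(I) > 0\}$ makes the right-hand side strictly greater than $-1$ uniformly in $m$ and $i$.

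For the ``if'' direction, fix $\epsilon$ as in the hypothesis and let $E$ be any prime divisor over $X$ whose center $c := c_X(E)$ is not contained in any component of the strict transform of $D$. Taking the infimum over $m$ in the hypothesis and rearranging yields $a^+_E(X,\Delta+D,\ba^\lambda) \ge -1 + \epsilon\,\ord_E(I)$. If $c$ lies in the zero locus of $I$, then $\ord_E(I) \ge 1$ and $a^+_E(X,\Delta+D,\ba^\lambda) \ge -1+\epsilon > -1$. Otherwise, by the assumption on $I$, $K_X+\Delta+D$ is $\Q$-Cartier on an open neighborhood $V$ of $c$; picking $m_0 \ge 1$ so that $m_0\Delta$ is integral on $X$ and $m_0(K_X+\Delta+D)$ is Cartier on $V$ (possible by multiplying a local Cartier index by the global denominator of $\Delta$), the natural and limiting pullbacks agree over $V$, so $a^+_{m_0,E}(X,\Delta+D,\ba^\lambda) = a^+_E(X,\Delta+D,\ba^\lambda)$; since $\ord_E(I)=0$, the hypothesis for this $m_0$ then produces the required strict inequality.

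I expect the main subtlety to be this last sub-case: an infimum of numbers strictly greater than $-1$ need not itself be strictly greater than $-1$, and the strictness has to be recovered from $\Q$-Cartierness of $K_X+\Delta+D$ near $c$, which is precisely what the containment of the non-$\Q$-Cartier locus in the zero locus of $I$ supplies. Everywhere else the argument is a bookkeeping exercise with the identity above and Lemma \ref{single resolution}.
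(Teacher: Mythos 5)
Your proof is correct and follows essentially the same strategy as the paper: both directions reduce to a single log resolution of $(X,\Delta+D,\ba I)$ via Lemma~\ref{single resolution}, the ``only if'' part chooses $\epsilon$ uniformly small against the finitely many relevant discrepancies, and the ``if'' part splits on whether the center of $E$ lies in the zero locus of $I$, using $\ord_E(I)>0$ in the first case and the local $\Q$-Cartierness of $K_X+\Delta+D$ (together with the stabilization $a^+_{m_0,E}=a^+_E$) in the second. You spell out the second case of the ``if'' direction a bit more explicitly than the paper, which just cites Remark~\ref{remark on val lc}, but the content is the same.
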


\begin{proof}
Take a log resolution $f: Y \to X$ of $(X, \Delta+D, \ba I)$ separating the components of $D$, and let $F$ and $G$ be Cartier divisors on $Y$ such that $\sO_Y(-F)= \ba \sO_Y$ and $\sO_Y(-G)=I \sO_Y$. 
For all integer $m \ge 1$ such that $m\Delta $ is an integral Weil divisor, we define the $\R$-Weil divisors $(\Delta+D)_{Y}^+$ and $(\Delta+D)_{m,Y}^+$ on $Y$ as 
\begin{align*}
(\Delta+D)_{Y}^+  &: = -f^{*}(-(K_X+\Delta+D)) - K_Y=-\sum_E a_E^+(X, \Delta+D)E,\\
(\Delta+D)_{m,Y}^+ &: = -\frac{f^{\natural}(-m(K_X+\Delta+D))}{m} - K_Y=-\sum_E a_{m,E}^+(X, \Delta+D)E,
\end{align*}
where $E$ runs through all prime divisors on $Y$. 

To prove the ``only if" part, it suffices to show by Lemma \ref{single resolution} that there exists a real number $\epsilon>0$ such that 
\[
\ord_E((\Delta+D)_{m,Y}^{+} -f^{-1}_*D + \lambda F + \epsilon G ) <1
\] 
for every integer $m \ge 1$ with $m\Delta$ an integral Weil divisor and for every prime divisor $E$ on $Y$.
Since $(X, \Delta+D, \ba^\lambda)$ is valuatively plt along $D$, 
\[
\ord_E((\Delta+D)_Y^{+} -f^{-1}_*D+ \lambda F )<1
\] 
for every prime divisor $E$ on $Y$.
Therefore, there exists $\epsilon > 0$ such that 
\[
\ord_E((\Delta+D)_Y^{+} -f^{-1}_*D + \lambda F + \epsilon G)<1
\]
for all prime divisors $E$ on $Y$.
Then we have 
\[
\ord_E((\Delta+D)_{m,Y}^{+} -f^{-1}_*D + \lambda F + \epsilon G ) \le \ord_E((\Delta+D)_Y^{+} -f^{-1}_*D+ \lambda F + \epsilon G)<1. 
\]

For the ``if" part, we fix a prime divisor $E$ on $Y$.
It is enough to show by Lemma \ref{single resolution} that 
\[
\ord_E((\Delta+D)_{Y}^{+} -f^{-1}_*D  + \lambda F ) <1.
\] 
If $K_X+\Delta+D$ is $\Q$-Cartier at the center of $E$, then this inequality follows from Remark \ref{remark on val lc}.
Therefore, we may assume that $K_X+\Delta+D$ is not $\Q$-Cartier at the center of $E$.
Then by the definition of $I$, the center of $E$ is contained in the zero locus of $I$, which implies that $\ord_E(G) >0$. 
Since $(X, \Delta+D, \ba^\lambda I^\epsilon)$ is $m$-weakly valuatively plt along $D$ for all $m \ge 1$ such that $m\Delta$ is an integral Weil divisor, 
\[
\ord_E((\Delta+D)_{m,Y}^{+} -f^{-1}_*D + \lambda F ) <1 - \epsilon \ord_E(G). 
\] 
Taking the supremum over all such $m$, we have 
\[
\ord_E((\Delta+D)_{Y}^{+}  -f^{-1}_*D + \lambda F )  \le 1 - \epsilon \ord_E(G) < 1. 
\]
\end{proof}

\begin{lem}[\textup{cf.~\cite[Proposition 4.11 (2)]{KM}}]\label{2-dim val lc}
Let $(X,\Delta, \ba^\lambda)$ be as in Definition \ref{+ discrepancy}. 
If $(X, \Delta, \ba^\lambda)$ is valuatively lc at a point $x \in X$ with $\dim \sO_{X,x} \le 2$, then $K_X+\Delta$ is $\Q$-Cartier at $x$, and therefore, $(X,\Delta, \ba^\lambda)$ is lc at $x$.
\end{lem}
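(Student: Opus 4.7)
The plan is to show that the hypothesis forces $K_X+\Delta$ to be $\Q$-Cartier at $x$, after which the second assertion follows from Remark~\ref{remark on val lc}. Since $\ord_E(F)\ge 0$ on any log resolution, we have $a^+_E(X,\Delta)\ge a^+_E(X,\Delta,\ba^\lambda)\ge -1$, so $(X,\Delta)$ is already valuatively lc at $x$ and we may forget $\ba^\lambda$. The cases $\dim\sO_{X,x}\le 1$ are trivial because $X$ is then regular at $x$, so I would localize and assume $X=\Spec\sO_{X,x}$ is a two-dimensional normal local scheme, and fix a log resolution $f\colon Y\to X$ of $(X,\Delta)$ with exceptional prime divisors $E_1,\dots,E_n$.

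The central step is to identify $f^{*}D$ of Definition~\ref{dFH valuation} with Mumford's numerical pullback $f^{*}_M D$, defined as the unique $\Q$-divisor $f^{-1}_{*}D+\sum_i r_i E_i$ satisfying $(f^{*}_M D)\cdot E_j=0$ for all $j$ (existence and uniqueness being guaranteed by the negative definiteness of the intersection form on $\sum E_i$). The inequality $\ord_{E_i}(D)\ge r_i$ is obtained by the negativity lemma: for any $m\ge 1$ with $mD$ integral and any $g\in\sO_X(-mD)$, the principal divisor $\Div_Y(g)$ satisfies $\Div_Y(g)\cdot E_j=0$, and separating off the non-exceptional contribution (where $\ord_{f^{-1}_{*}P}(g)$ is bounded below by $m$ times the coefficient of $P$ in $D$) and using that the inverse of the exceptional intersection matrix has non-positive entries forces $\ord_{E_i}(g)\ge m r_i$. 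The reverse inequality is obtained by pushing down, for $m$ large and divisible, sections of $\sO_Y(-\lceil mf^{*}_M D\rceil)$, which supply elements of $\sO_X(-mD)$ whose $E_i$-order asymptotically saturates the bound.

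Writing $f^{*}(K_X+\Delta)=K_Y+f^{-1}_{*}\Delta+M$ with $M$ an exceptional $\Q$-divisor, the valuatively lc condition then rewrites as the numerical bound that the coefficient of $M$ along each $E_i$ is at most $1$ (together with the automatic statement that the coefficients of $\Delta$ lie in $[0,1]$). This is precisely the numerical log canonical condition for surface pairs, and \cite[Proposition~4.11(2)]{KM} then forces $K_X+\Delta$ to be $\Q$-Cartier at $x$, as desired.

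The main obstacle I anticipate is the identification $f^{*}D=f^{*}_M D$ in dimension two. The ``$\ge$'' inequality is the elementary negativity-lemma computation sketched above, but the reverse inequality is subtler: it requires producing, for each $i$ and each $\epsilon>0$, a divisible $m$ and a section $g\in\sO_X(-mD)$ with $\ord_{E_i}(g)/m$ within $\epsilon$ of $r_i$, and this genuinely uses the two-dimensional structure. Once the identification is in hand, invoking the numerical criterion \cite[Proposition~4.11(2)]{KM} is formal.
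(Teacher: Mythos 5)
Your proposal takes essentially the same route as the paper: the paper's proof is a two-line argument that the de~Fernex--Hacon pullback agrees with Mumford's numerical pullback on a surface, so valuative log canonicity becomes numerical log canonicity, and one then invokes \cite[Proposition 4.11(2)]{KM}. You simply unpack the pullback identification (the $\ge$ direction via the sign of the inverse intersection matrix, the $\le$ direction via asymptotic sections of $\sO_Y(-\lceil m f^*_M D\rceil)$), which the paper takes as a known fact, and add the routine reductions of dropping $\ba$ and disposing of dimension $\le 1$.
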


\begin{proof}
Since the pullback of a $\Q$-Weil divisor on a surface, defined in Definition \ref{dFH valuation}, coincides with Mumford's numerical pullback, the pair $(X,\Delta)$ is  numerically lc at $x$ (see \cite[\S 4.1]{KM} for the definition of numerically lc pairs). 
The assertion then follows from \cite[Proposition 4.11 (2)]{KM}.  
\end{proof}

The log canonicity can be generalized for non-connected schemes in a natural way. 
\begin{defn}
Let $X$ be an excellent normal (not necessarily connected) $\Q$-scheme with a dualizing complex $\omega_X^{\bullet}$, $\Delta$ be an effective $\Q$-Weil divisor on $X$, $\lambda>0$ be a real number and $\ba \subseteq \sO_X$ be a coherent ideal that is nonzero at any generic points of $X$.
Let $X : = \coprod_i X_i$ be the decomposition of $X$ into connected components. 
We say that $(X, \Delta, \ba^{\lambda})$ is \textit{lc} (resp.~\textit{valuatively lc}) if so is $(X_i, \Delta|_{X_i}, \ba|_{X_i}^{\lambda})$ for all $i$.
\end{defn}

\subsection{Semi log canonical singularities}
Throughout this subsection, we assume that $X$ is an excellent reduced scheme satisfying Serre's condition $(S_2)$.
Let $\mathcal{K}_X$ denote the sheaf of total quotients of $X$.

We define the abelian groups $\mathrm{WDiv}^*(X)$ and $\WDiv_\Q^*(X)$ as 
\begin{align*}
\mathrm{WDiv}^*(X) &: = \bigoplus_{E} \Z E, \\
\mathrm{WDiv}_\Q^*(X) & : = \WDiv^*(X) \otimes_\Z \Q = \bigoplus_{E} \Q E ,
\end{align*}
where $E$ runs through all prime divisors on $X$ whose generic points are regular points of $X$. 
Similarly, let $\mathrm{Div}^*(X)$ be the subgroup of $\CDiv(X) = \Gamma(X, \mathcal{K}_X^*/\sO_X^*)$ defined as
\begin{multline*}
\mathrm{Div}^*(X) : = \{ C \in \mathrm{Div}(X) \mid C_x = 1 \mod \sO_{X,x}^*\  \textup{for every codimension one}  \\ \textup{singular point } x \in X \}
\end{multline*}
It follows from \cite[Theorem 11.5 (ii)]{Mat} 
that the canonical map 
\[
\mathrm{Div}^*(X) \to \mathrm{WDiv}^*(X)
\]
is injective.\footnote{The $(R_1)$ condition is assumed in loc.~cit., but this assumption is unnecessary for the injectivity.}

Let $D$ be a Weil divisor contained in $\WDiv^*(X)$.
Since the support of $D$ contains no codimension one singular points of $X$, there exists an open subset $U \subseteq X$ containing all codimension one points of $X$ such that the restriction $D|_U \in \WDiv^*(U)$ of $D$ is Cartier, that is, there exists a (unique) Cartier divisor $E_U$ on $U$ contained in $\CDiv^*(U)$ such that the Weil divisor defined by $E_U$ coincides with $D|_U$.
Then we define the subsheaf $\sO_X(D)$ of $\mathcal{K}_X$ as the pushforward $i_* \sO_U(E_U)$ of the invertible subsheaf $\sO_U(E_U) \subseteq \mathcal{K}_U$ by the open immersion $i : U \hookrightarrow X$. 

\begin{lem}
The quasi-coherent $\sO_X$-module $\sO_X(D)$ is coherent, reflexive and independent of the choice of $U$.
\end{lem}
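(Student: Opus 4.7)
The plan is to derive all three assertions—coherence, reflexivity, and independence of $U$—from a single Hartogs-type extension principle for $(S_2)$ schemes, which serves as the main technical input: given a Noetherian $(S_2)$ scheme $W$ and an open immersion $j: W' \hookrightarrow W$ with complement of codimension $\ge 2$, pushforward along $j$ is an equivalence between coherent reflexive $\sO_{W'}$-modules and coherent reflexive $\sO_W$-modules, with quasi-inverse given by restriction. In particular, any invertible $\sO_{W'}$-module pushes forward to a coherent reflexive $\sO_W$-module.

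Granting this, the coherence and reflexivity of $\sO_X(D) = i_*\sO_U(E_U)$ are immediate, by taking $W = X$, $W' = U$, and noting that $\sO_U(E_U)$ is invertible and the complement $X\setminus U$ has codimension $\ge 2$ by construction.

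For the independence of $U$, suppose $U_1$ and $U_2$ are two valid choices. Their intersection $U_0 := U_1 \cap U_2$ is also valid, since its complement in $X$ is contained in $(X \setminus U_1) \cup (X \setminus U_2)$ and therefore has codimension $\ge 2$, and $D|_{U_0}$ remains Cartier. It thus suffices to compare the sheaves obtained from two valid choices $U' \subseteq U$. The restriction $E_U|_{U'}$ belongs to $\CDiv^*(U')$ (since every codim one singular point of $U'$ is also a codim one singular point of $U$) and represents the Weil divisor $D|_{U'}\in\WDiv^*(U')$, so by the injectivity of $\CDiv^*(U') \hookrightarrow \WDiv^*(U')$ cited just above, $E_U|_{U'} = E_{U'}$, whence $\sO_U(E_U)|_{U'} = \sO_{U'}(E_{U'})$. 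Factoring the open immersion as $i^X_{U'} = i^X_U \circ i^U_{U'}$ and applying the extension principle to $i^U_{U'}$, whose complement has codim $\ge 2$ in $U$ because both $U$ and $U'$ contain every codim one point of $X$, we conclude $(i^X_{U'})_* \sO_{U'}(E_{U'}) = (i^X_U)_*(i^U_{U'})_*\sO_{U'}(E_{U'}) = (i^X_U)_*\sO_U(E_U)$, as required.

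The main obstacle is establishing the extension principle in the present generality (excellent reduced $(S_2)$, not necessarily normal). Locally on $\Spec A$ with $A$ Noetherian and $(S_2)$, and for an invertible sheaf $\mathcal{L}$ on an open $V \subseteq \Spec A$ whose complement has codimension $\ge 2$, one must verify that $M := \Gamma(V, \mathcal{L})$ is a finitely generated reflexive $A$-module and that $M|_V \cong \mathcal{L}$. Finite generation follows by covering $V$ by finitely many principal affines that trivialize $\mathcal{L}$ and a \v{C}ech computation, while reflexivity (equivalently, $M$ satisfying $(S_2)$ as an $A$-module) follows from the depth-cohomology criterion applied to the $(S_2)$ ring $A$ together with the codimension hypothesis on $\Spec A \setminus V$. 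This is standard but deserves careful citation, as the usual formulations assume normality.
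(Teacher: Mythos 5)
Your overall strategy is genuinely different from the paper's. You argue directly that $i_*\sO_U(E_U)$ is coherent and reflexive via a general Hartogs-type extension principle for $(S_2)$ schemes, whereas the paper first \emph{constructs} an explicitly coherent reflexive sheaf $\mathcal{G}$ on all of $X$ (as $\mathcal{H}om_X\bigl( \bigotimes_i \mathcal{I}_{E_i}^{\otimes a_i}, (\bigotimes_j \mathcal{I}_{E_j}^{\otimes b_j})^{**}\bigr)$, citing Schwede for its reflexivity), then identifies it with $\sO_X(D)$ by comparing on a codimension-$\ge 2$ open and applying Lemma~\ref{S2}(3). That identification simultaneously gives coherence, reflexivity, and independence of $U$. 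The paper's manoeuvre is designed precisely so that the coherence of $i_*\sO_U(E_U)$ never has to be established directly.

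This matters because the step you flag as the ``main obstacle'' is where your sketch has a real gap. Lemma~\ref{S2} cannot be used here: it \emph{assumes} $\mathcal{F}$ is coherent and then characterizes when it is reflexive, so its clause~(4) does not yield coherence of a pushforward. And the \v{C}ech argument you propose for finite generation does not work as stated: the kernel of $\prod_i A_{f_i} \to \prod_{i<j} A_{f_i f_j}$ is a submodule of a module that is \emph{not} finitely generated over $A$, so Noetherianity gives nothing. A correct direct argument must use either (a) Grothendieck's finiteness theorem for local cohomology with its depth hypotheses, or (b) the fact that $\sO_U(E_U)$ lives inside $\mathcal{K}_U$ and can be bounded between two globally defined coherent fractional ideals built from the $\mathcal{I}_{E_i}$ --- which is essentially the paper's construction in disguise. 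So while your plan is morally sound and your handling of the independence of $U$ is fine, you should replace the \v{C}ech sketch with one of these two genuine arguments, or simply follow the paper and produce the coherent extension explicitly rather than asking the pushforward to supply it.
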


\begin{proof}
We write
\[
D= \sum_{i=1}^n a_i E_i - \sum_j^m b_j E_j,
\]
where $E_i$ and $E_j$ are prime divisors on $X$ whose generic points are regular points of $X$ and $a_i$ and $b_j$ are positive integers.
Let $\mathcal{G}$ be the coherent sheaf 
\[
\mathcal{H}om_{X}\left( \bigotimes_i  \mathcal{I}_{E_i}^{\otimes a_i}, \Big(\bigotimes_j  \mathcal{I}_{E_j}^{\otimes b_j}\Big)^{**}\right),
\]
where $\mathcal{I}_{E_i}$ (resp.~$\mathcal{I}_{E_j}$) is the ideal sheaf of $E_i$ (resp.~$E_j$) and $(-)^{**}$ denotes the reflexive hull.
Note by \cite[Corollary 2.9]{Sch10} that $\mathcal{G}$ is reflexive.\footnote{$X$ is assumed to be irreducible in loc.~cit., but this assumption is unnecessary.}

Let $j: V \hookrightarrow U$ be the open immersion from an open subset $V \subseteq U$ containing all codimension one points of $X$ such that $E_i|_V$ (resp.~$E_j|_V$) is Cartier for all $i$ (resp.~$j$).
Since $\mathcal{G}|_V \cong \sO_V(E_U |_V)$, it follows from Lemma \ref{S2} that 
\[
\mathcal{G} \cong (i \circ j)_* (\mathcal{G}|_V)  \cong i_* j_* \sO_V(E_U|_V)  \cong i_* \sO_U(E_U) =  \sO_X(D). 
\]
\end{proof}

\begin{lem}\label{S2}
Let $X$ be a Noetherian reduced $(S_2)$ scheme and $\mathcal{F}$ be a coherent sheaf.
Then the following conditions are equivalent to each other. 
\begin{enumerate}[label=$(\arabic*)$]
\item $\mathcal{F}$ is reflexive.
\item $\mathcal{F}$ satisfies $(S_2)$ and $\mathcal{F}$ is reflexive in codimension one, that is, $\mathcal{F}_x$ is a reflexive $\sO_{X,x}$-module for each codimension one point $x \in X$. 
\item $\mathcal{F}$ is reflexive in codimension one and the natural map $\mathcal{F} \to i_*i^* \mathcal{F}$ is an isomorphism for every open subscheme $i : U \hookrightarrow X$ with $\mathrm{Codim}(X \setminus U, X) \ge 2$.  
\item There exists a reflexive sheaf $\mathcal{G}$ on an open subscheme $i: U \hookrightarrow X$ with $\mathrm{Codim}(X \setminus U, X) \ge 2$ such that $\mathcal{F} \cong i_* \mathcal{G}$.
\end{enumerate}
\end{lem}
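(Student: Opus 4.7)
The plan is to establish the cyclic implications $(1) \Rightarrow (2) \Rightarrow (3) \Rightarrow (4) \Rightarrow (1)$. Two local facts will do most of the work and I will use them repeatedly. First, for any coherent $\sO_X$-module $\mathcal{E}$, the sheaf $\mathcal{H}om_{\sO_X}(\mathcal{E}, \sO_X)$ is $(S_2)$ on $X$, since $\sO_X$ is, by the standard depth estimate $\depth_x \mathcal{H}om_{\sO_X}(\mathcal{E}, \sO_X) \ge \min(2, \depth_x \sO_X)$; in particular any double dual $\mathcal{E}^{**}$ is $(S_2)$. Second, if $\mathcal{E}$ is $(S_2)$ and $i : U \hookrightarrow X$ is an open immersion with $\mathrm{Codim}(X \setminus U, X) \ge 2$, then the natural map $\mathcal{E} \to i_* i^* \mathcal{E}$ is an isomorphism, because the local cohomology sheaves of $\mathcal{E}$ along $Z := X \setminus U$ in degrees $0$ and $1$ vanish.

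With these in hand, $(1) \Rightarrow (2)$ is immediate: reflexivity localizes to give reflexivity in codimension one, and writing $\mathcal{F} \cong \mathcal{H}om_{\sO_X}(\mathcal{F}^*, \sO_X)$ shows that $\mathcal{F}$ is $(S_2)$. The implication $(2) \Rightarrow (3)$ is a direct application of the second fact to $\mathcal{F}$. For $(3) \Rightarrow (4)$, the locus where $\mathcal{F}$ fails to be reflexive is closed and avoids every point of codimension at most one (reflexivity at generic points is automatic, while at codimension-one points it holds by hypothesis), hence has codimension $\ge 2$; letting $U$ be its complement and $\mathcal{G} := \mathcal{F}|_U$ gives $\mathcal{F} \cong i_* i^* \mathcal{F} = i_* \mathcal{G}$. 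Finally, for $(4) \Rightarrow (1)$, the natural map $\mathcal{F} \to \mathcal{F}^{**}$ restricts on $U$ to $\mathcal{G} \to \mathcal{G}^{**}$, an isomorphism by hypothesis. Both $\mathcal{F} = i_*\mathcal{G}$ and $\mathcal{F}^{**}$ are $(S_2)$ on $X$ (the former as the pushforward of an $(S_2)$ sheaf from an open with codimension $\ge 2$ complement, the latter by the first fact), so applying the second fact to $\mathcal{F}^{**}$ yields $\mathcal{F}^{**} \cong i_*(\mathcal{F}^{**}|_U) \cong i_* \mathcal{G}^{**} \cong i_* \mathcal{G} \cong \mathcal{F}$, as required.

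The main obstacle is handling the reduced but possibly non-normal (indeed non-irreducible) $(S_2)$ setting that the paper needs for its application to slc singularities. Standard treatments of reflexive sheaves, such as Hartshorne's, typically assume normality and rely on the Krull-domain structure of the stalks of $\sO_X$, so one must re-derive the depth estimate for $\mathcal{H}om(-, \sO_X)$ and the codimension-two extension property directly from the $(S_2)$ hypothesis. Both statements are known but somewhat scattered in the literature; the purpose of the lemma is precisely to package them in the generality needed below.
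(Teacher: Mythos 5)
Your proof is correct and takes essentially the same approach as the reference [Sch10, Section 2] that the paper cites in place of a proof. The cyclic chain of implications $(1)\Rightarrow(2)\Rightarrow(3)\Rightarrow(4)\Rightarrow(1)$, driven by the depth estimate $\depth \mathcal{H}om_{\sO_X}(\mathcal{E},\sO_X)_x \ge \min(2,\depth \sO_{X,x})$ and the local-cohomological criterion that an $(S_2)$ coherent sheaf extends isomorphically across a closed subset of codimension at least two, is exactly the mechanism used there.
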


\begin{proof}
The proof is very similar to the argument in \cite[Section 2]{Sch10}.
\end{proof}

Let $\omega_X^{\bullet}$ be a dualizing complex of $X$ and $\omega_X$ be the canonical sheaf associated to $\omega_X^{\bullet}$, that is, the coherent $\sO_X$-module defined as the first nonzero cohomology of $\omega_X^{\bullet}$. 
A \emph{canonical divisor} on $X$ associated to $\omega_X^{\bullet}$ is a Weil divisor $K_X$ contained in $\WDiv^*(X)$ such that $\sO_X(K_X) \cong \omega_X$ as $\sO_X$-modules.
The following proposition gives sufficient conditions for $X$ to admit a canonical divisor.

\begin{prop}\label{canonical divisor exists}
Let $(\Lambda, \m, k)$ be a Noetherian local ring with $k$ infinite and $A$ be an excellent $\Lambda$-algebra.
Suppose that $X$ is a reduced, $(S_2)$, $(G_1)$ and quasi-projective $A$-scheme with a dualizing complex $\omega_X^{\bullet}$.
Then $X$ admits a canonical divisor associated to $\omega_X^{\bullet}$ if one of the following conditions hold.
\begin{enumerate}[label=\textup{(\roman*)}]
\item There exists a finite morphism $f: X \to Y$ to an excellent reduced $(S_2)$ and $(G_1)$ scheme $Y$ with the following conditions:
\begin{enumerate}[label=\textup{(\alph*)}]
\item $Y$ admits a dualizing complex $\omega_Y^{\bullet}$ such that $f^! \omega_Y^{\bullet} \cong \omega_X^{\bullet}$,
\item $Y$ admits a canonical divisor associated to $\omega_Y^{\bullet}$, and
\item the codimension of $f(\eta) \in Y$ is constant for all generic points $\eta$ of $X$.
\end{enumerate}
\item $X$ is irreducible.
\item $X$ is connected and biequidimensional (see Definition \ref{biequidim} for the definition of biequidimensional schemes). 
\end{enumerate}
\end{prop}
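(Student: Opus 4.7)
The plan is to construct $K_X$ as the divisor of a judiciously chosen rational section of $\omega_X$. A preliminary observation is that $\omega_X$—the first nonzero cohomology of $\omega_X^\bullet$—satisfies $(S_2)$ by the $(S_2)$ hypothesis on $X$, and is invertible at every codimension-one point by the $(G_1)$ hypothesis. Lemma \ref{S2} then implies that $\omega_X$ is reflexive and equals $i_*(\omega_X|_U)$, where $i\colon U \hookrightarrow X$ is the Gorenstein locus (its complement has codimension $\ge 2$ and contains no codimension-one point). Hence it suffices to produce a Cartier divisor $E_U \in \CDiv^*(U)$ with $\sO_U(E_U) \cong \omega_X|_U$; the associated element of $\WDiv^*(X)$ will then be the desired $K_X$.

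The first substantive step is to check, in each of the three cases, that $\omega_X$ has rank one at every generic point $\eta$ of $X$. Localizing $\omega_X^\bullet$ at $\eta$ yields $k(\eta)[-d(\eta)]$ for some shift $d(\eta)\in\Z$, and $(\omega_X)_\eta \neq 0$ precisely when $d(\eta)$ equals $d_0 := \min_{\eta'} d(\eta')$; I therefore need $d(\cdot)$ to be constant on the finite set of generic points. Case (ii) is trivial. In case (iii), biequidimensionality identifies $-d(\eta)$ with the common dimension of the irreducible components of $X$. In case (i), the isomorphism $f^!\omega_Y^\bullet \cong \omega_X^\bullet$ together with Grothendieck duality for the finite morphism $f$ (which presents $f^!\omega_Y^\bullet$ as $R\mathcal{H}om_{\sO_Y}(f_*\sO_X,\omega_Y^\bullet)$) expresses $d(\eta)$ in terms of the shift of $\omega_Y^\bullet$ at $f(\eta)$ plus the codimension of $f(\eta)$ in $Y$; hypothesis (c), combined with the existence of $K_Y$ on $Y$, yields the required constancy on $X$.

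With rank one in hand, I choose $s \in H^0(X, \omega_X(n))$ and $t \in H^0(X, \sO_X(n))$ for $n \gg 0$, where $\sO_X(1)$ is a very ample invertible sheaf supplied by quasi-projectivity. I demand that both $s$ and $t$ be nonvanishing at every generic point of $X$ and generate the respective stalk at every codimension-one singular point of $X$; the latter is a finite set, being the collection of generic points of the codimension-one components of the singular locus. Each condition cuts out a proper $k$-linear subspace of the corresponding section space, and the hypothesis that the residue field $k$ of $\Lambda$ is infinite permits the usual prime-avoidance argument to produce $s$ and $t$ satisfying all of the conditions simultaneously. Then $\Div(s),\Div(t) \in \CDiv^*(U)$ represent $\omega_X(n)|_U$ and $\sO_X(n)|_U$, so $E_U := \Div(s) - \Div(t) \in \CDiv^*(U)$ represents $\omega_X|_U$ and supplies $K_X$.

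The main obstacle is the rank-one verification in case (i): one must carefully track the shift of the dualizing complex under $f^!$ for a finite morphism of excellent schemes with possibly non-standard normalizations of $\omega_Y^\bullet$ and $\omega_X^\bullet$, and argue that hypothesis (c) is exactly what ensures that the shift on generic points of $X$ is constant given its constancy on the corresponding points of $Y$. The remaining cases and the prime-avoidance construction of $E_U$ are relatively routine once this is in place.
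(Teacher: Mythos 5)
Your proposal is correct and follows essentially the same route as the paper: the constancy of the dimension function on generic points (your rank-one check) is exactly the content of Lemmas \ref{canonical AC divisor1} and \ref{canonical AC divisor2}, and the prime-avoidance construction using quasi-projectivity and infinite residue field is the content of Lemma \ref{Moving}. One small ordering caveat: your ``preliminary observation'' that $\omega_X$ is invertible at every codimension-one point already presupposes that $\Supp\omega_X = X$, which is established only in your subsequent rank-one verification, so that step should logically come first.
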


\begin{proof}
It follows from Lemmas \ref{canonical AC divisor2} and \ref{Moving}.
\end{proof}

Let $\nu : X^n \to X$ be the normalization of $X$ and $C \in \mathrm{WDiv}(X^n)$ be the conductor divisor of $\nu$ on $X^n$, that is, an effective Weil divisor on $X^n$ satisfying that 
\[
\sO_{X^n}(-C) = \nu^{-1} (\mathcal{H}om_X(\nu_* \sO_{X^n}, \sO_X)) \subseteq \sO_{X^n}.
\]
If $X$ admits a canonical divisor $K_X$ associated to $\omega_X^{\bullet}$, then it follows from \cite[Subsection 5.1]{Kol} that the Weil divisor $\nu^* K_X-C$
on $X^n$ is a canonical divisor associated to the dualizing complex $\nu^{!} \omega_X^{\bullet}$.

\begin{defn}\label{defn slc}
Let $X$ be an excellent reduced $(S_2)$ and $(G_1)$ $\Q$-scheme admitting a canonical divisor $K_X \in \WDiv^*(X)$ associated to a dualizing complex $\omega_X^{\bullet}$. 
Suppose that $\Delta \in \WDiv^*_{\Q}(X)$ is an effective $\Q$-Weil divisor, $\ba \subseteq \sO_X$ is a coherent ideal sheaf that is nonzero at any generic points of $X$ and $\lambda >0$ is a real number.
\begin{enumerate}
\item The triple $(X, \Delta, \ba^\lambda)$ is said to be \textit{semi log canonical} (or \textit{slc} for short) if $K_X+\Delta$ is $\Q$-Cartier and $(X^n, \nu^* \Delta + C, (\ba \sO_{X^n})^{\lambda})$ is lc.
\item The triple $(X, \Delta, \ba^\lambda)$ is said to be \textit{valuatively slc} if $(X^n, \nu^* \Delta + C, (\ba \sO_{X^n})^{\lambda})$ is valuatively lc.

\end{enumerate}
\end{defn}

\begin{rem}\label{remark on slc} 
(1) There exists an example of a $2$-dimensional non-$\Q$-Gorenstein valuatively slc scheme (see \cite[Example 5.16]{Kol}).

(2) Let $(X,\Delta, \ba^\lambda)$ be as in Definition \ref{defn slc} and assume in addition that $X$ is a $\Q$-scheme and $x \in X$ is a point.  
It then follows from Proposition \ref{singularity at a point} that $(X, \Delta, \ba^\lambda)$ is \textit{valuatively slc at $x$}, that is, the induced triple $(\Spec \sO_{X,x}, \Delta_x, (\ba \sO_{X,x})^\lambda)$ is valuatively slc if and only if $(U, \Delta|_U, \ba|_U^\lambda)$ is valuatively slc for an open neighborhood $U \subseteq X$ of $x$.
\end{rem}

\subsection{Different}\label{Subsection Diff}

In this subsection, we recall the definition and basic properties of the different of a $\Q$-Weil divisor. 
The detailed proofs are given in Appendix \ref{appendix} (see also \cite[Subsection 4.1]{Kol}). 

Throughout this subsection, we fix an excellent scheme $S$ admitting a dualizing complex $\omega_S^{\bullet}$, 
every scheme is assumed to be separated and of finite type over $S$ and every morphism is assumed to be an $S$-morphism.
Moreover, given a scheme $X$, we always choose 
$\omega_X^{\bullet} : = \pi_X^{!} \omega_S^{\bullet}$ as a dualizing complex of $X$, 
where $\pi_X: X \to S$ is the structure morphism, and $\omega_X$ always denotes the canonical sheaf associated to $\omega_X^{\bullet}$.

\begin{setting}\label{setup for Diff}
Let $(Y, W, W', i, \mu, f, \Delta)$ be a tuple satisfying the following conditions. 
\begin{enumerate}
\item $Y$ is an excellent reduced $(S_2)$ and $(G_1)$ scheme over $S$ admitting a canonical divisor $K_Y \in \WDiv^*(Y)$ associated to $\omega_Y^{\bullet}:= \pi_Y^! \omega_S^{\bullet}$, where $\pi_Y:Y \to S$ is the structure morphism. 
\item $i: W \hookrightarrow Y$ is the closed immersion from a reduced closed subscheme $W$ whose generic points are codimension one regular points of $Y$. In particular, $W \in \WDiv^*(Y)$. 
\item $\mu: W' \to W$ is a finite birational morphism from a reduced $(S_2)$ and $(G_1)$ scheme $W'$ and $f: = i \circ \mu : W' \to Y$ is the composite of $i$ and $\mu$. 
\[
\xymatrix{
Y & \\
W \ar^-{i}[u] & W' \ar^-{\mu}[l] \ar_-{f}[ul]
}\]
\item $\Delta \in \WDiv^*_{\Q}(Y)$ is a $\Q$-Weil divisor on $Y$ such that the support of $\Delta$ has no common components with $W$ and $K_Y+ \Delta+W \in \WDiv_\Q^*(Y)$ is $\Q$-Cartier at every codimension one point $w$ of $W$.
\item For each codimension one singular point $w'$ of $W'$, there exists an open neighborhood $U \subseteq Y$ of $f(w') \in Y$ such that $\Delta|_U=0$ and $W|_U$ is Cartier, that is, $W|_U$ is contained in the image of the natural injection $\CDiv^*(U) \to \WDiv^*(U)$.
\end{enumerate}
\end{setting}

Let $(Y, W, W', i, \mu, f, \Delta)$ be as in Setting \ref{setup for Diff}.
Then the $\Q$-Weil divisor $\Diff_{W'}(\Delta) \in \WDiv^*_\Q(W')$ is defined as in \cite[Subsection 4.1]{Kol} and is called the \emph{different} of $\Delta$ on $W'$.
The reader is referred to Lemma-Definition \ref{Definition of Diff}  for details. 

\begin{rem}
The condition (5) in Setting \ref{setup for Diff} is not essential.
In Subsection \ref{Subsection AC-Diff}, we remove this condition by formulating the different $\Diff_{W'}(\Delta)$ in terms of AC-divisors. 
\end{rem}

\begin{lem}\label{Diff vs Cond1}
Let $(Y, W, W', i, \mu, f, \Delta)$ be as in Setting \ref{setup for Diff} and $\pi: W^n = (W')^n \to W'$ be the normalization of $W'$.
Then
\[
\Diff_{W^n}(\Delta) = \pi^* \Diff_{W'}(\Delta) + C_{W'},
\]
where $C_{W'}$ denotes the conductor divisor of $\pi$ on $W^n=(W')^n$.
\end{lem}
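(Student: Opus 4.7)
The strategy is to reduce the identity to a compatibility between two adjunction-type formulas: the one defining the different and the standard conductor formula for normalization. Since all three terms in the claimed identity are $\Q$-Weil divisors on $W^n$, and Weil divisors are determined by their coefficients at codimension one points, it suffices to verify the equality componentwise, prime divisor by prime divisor.

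The plan is as follows. First, I would recall from Lemma-Definition \ref{Definition of Diff} that $\Diff_{W'}(\Delta)$ is characterized (locally at each codimension one point of $W'$) by the adjunction identity
\[
K_{W'} + \Diff_{W'}(\Delta) \sim_{\Q} f^{*}(K_Y + \Delta + W),
\]
and similarly, applying the same construction to the composite morphism $f \circ \pi : W^n \to Y$,
\[
K_{W^n} + \Diff_{W^n}(\Delta) \sim_{\Q} (f \circ \pi)^{*}(K_Y + \Delta + W) = \pi^{*} f^{*}(K_Y + \Delta + W).
\]
Pulling back the first identity along $\pi$ and substituting into the second, one gets
\[
K_{W^n} + \Diff_{W^n}(\Delta) \sim_{\Q} \pi^{*} K_{W'} + \pi^{*} \Diff_{W'}(\Delta).
\]
Combining this with the classical conductor formula $\pi^{*} K_{W'} = K_{W^n} + C_{W'}$ (the standard adjunction for the normalization of a reduced $(S_2)$ $(G_1)$ scheme, see \cite[Proposition 5.7]{Kol}) and cancelling $K_{W^n}$ yields the desired equality.

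Concretely, to make this a clean argument I would split into two cases at each prime divisor $E \subset W^n$. If the image $\pi(E)$ is a regular codimension one point of $W'$, then $\pi$ is an isomorphism along $E$, the coefficient of $C_{W'}$ at $E$ is zero, and the two different constructions agree tautologically by naturality of the defining adjunction. If instead $\pi(E)$ is a codimension one singular point of $W'$, then by condition (5) of Setting~\ref{setup for Diff} we may assume $\Delta = 0$ locally and $W$ is Cartier locally near $f(\pi(E))$; in this situation $\Diff_{W'}(\Delta) = 0$ near $\pi(E)$, so the formula reduces to the assertion that the coefficient of $\Diff_{W^n}(0)$ along $E$ equals that of $C_{W'}$, which is precisely the classical conductor/adjunction calculation used to define the different in the first place.

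The main obstacle is bookkeeping rather than conceptual: one has to verify that the local characterization of the different from the appendix behaves naturally with respect to the finite birational morphism $\pi$, and in particular that $\pi^{*}\Diff_{W'}(\Delta)$ makes sense and commutes with the various pullbacks despite $W'$ being non-normal in codimension one. Once the definition is unpacked carefully at singular codimension one points (where both sides involve contributions from the conductor), the argument reduces to the elementary manipulation of $\Q$-linear equivalences above.
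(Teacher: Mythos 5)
The high-level reduction you propose — compare the two adjunction identities, pull one back along $\pi$, and cancel — is the right shape, and the paper's actual proof (delegated to Lemma~\ref{AC-Diff vs Cond1} in the appendix) follows exactly this outline. But your sketch has two genuine gaps that the paper has to work to close, and neither is mere bookkeeping.

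First, the cancellation step is not valid at the level of $\Q$-linear equivalence. From $K_{W^n}+\Diff_{W^n}(\Delta) \sim_{\Q} K_{W^n}+C_{W'}+\pi^*\Diff_{W'}(\Delta)$ one cannot conclude equality of the two $\Q$-divisors; linear equivalence is much weaker. The different is a \emph{specific} AC divisor, defined (up to the ambiguities built into $\WSh_\Q$) by the reflexive image of the morphism $\beta_{\mathcal{A}}$, and it is only independent of the choice of canonical divisor representatives because of Lemma~\ref{basic for AC-Diff}(1). To make the argument produce an actual equality, one must choose the inclusions $\alpha_{W'} \colon \omega_{W'}\hookrightarrow\mathcal{K}_{W'}$ and $\alpha_{W^n}\colon\omega_{W^n}\hookrightarrow\mathcal{K}_{W^n}$ \emph{compatibly} through the trace map $\mathrm{Tr}_\pi$, so that $K_{W^n}=\pi^*K_{W'}-C_{W'}$ holds on the nose; one must then check that the two $\beta$-morphisms agree, which is exactly the compatibility of trace maps with composition (diagram~\eqref{Trace composition diagram}). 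This is the technical heart of Lemma~\ref{AC-Diff vs Cond1}, and it is nowhere in your sketch.

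Second, the componentwise Case~2 argument breaks down. You assert that over a codimension-one singular point $\pi(E)$ of $W'$, with $\Delta=0$ and $W$ Cartier locally, one has $\Diff_{W'}(\Delta)=0$ near $\pi(E)$ and the identity reduces to $\ord_E \Diff_{W^n}(0)=\ord_E C_{W'}$, ``which is precisely the classical conductor/adjunction calculation.'' This is circular — it is essentially the statement of the lemma you are trying to prove at $E$ — and, more seriously, the asserted vanishing of $\Diff_{W'}$ at singular codimension-one points is delicate in the generality of Setting~\ref{setup for Diff}. When $W'\to W$ is not an isomorphism over the singular locus of $W'$, the different $\Diff_{W'}(0)$ can pick up a nontrivial contribution at the singular points, and what the conductor/adjunction computation actually gives is that $\ord_E \Diff_{W^n}(0)$ equals the coefficient of the conductor of $W^n\to W$ at $E$, not that of $W^n\to W'$; the two differ whenever $W\subsetneq W'\subsetneq W^n$. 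Closing this gap is precisely what the trace-map-compatibility argument in the appendix accomplishes, so the ``bookkeeping'' you defer is the entire content of the proof.
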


\begin{proof}
This is a special case of Lemma \ref{AC-Diff vs Cond1}.
\end{proof}

\begin{lem}\label{Diff vs Cond2}
Let $(Y, W, W', i, \mu, f, \Delta)$ be as in Setting \ref{setup for Diff} such that $f: W' \to Y$ factors through the normalization $\nu: Y^n \to Y$ of $Y$.
We further assume that $Y$ is normal at $f(w') \in Y$ for every codimension one singular point $w' \in W'$.
Then 
\[
\Diff_{W'}(\Delta) = \Diff_{W'}(\nu^* \Delta +C_Y),
\]
where $C_Y$ denotes the conductor divisor of $\nu$ on $Y^n$.
\end{lem}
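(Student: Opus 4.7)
My approach is to transfer the data on $Y$ to the normalization $\nu: Y^n \to Y$ via the given factorization $f = \nu \circ g$ with $g: W' \to Y^n$, assemble a companion tuple that fits into Setting \ref{setup for Diff}, and then compare the two differents coefficient by coefficient at codimension-one regular points of $W'$.

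First I would identify the appropriate tuple on $Y^n$. Let $W^\dagger$ be the reduced scheme-theoretic image of $g$. Since the generic points of $W$ are regular points of $Y$, the map $\nu$ is an isomorphism over them, so $W^\dagger$ coincides with the pullback $\nu^* W$ at codimension one and hence lies in $\WDiv^*(Y^n)$. I would then verify that $(Y^n, W^\dagger, W', i^\dagger, \mu^\dagger, g, \nu^*\Delta + C_Y)$ satisfies all of the conditions of Setting \ref{setup for Diff}. The crucial identity
\[
\nu^*(K_Y + W + \Delta) \;=\; K_{Y^n} + W^\dagger + (\nu^*\Delta + C_Y),
\]
which follows from $K_{Y^n} = \nu^* K_Y - C_Y$ together with $\nu^* W = W^\dagger$ at codimension one points lying over $W$, immediately yields condition (4), because the pullback of a $\Q$-Cartier divisor is $\Q$-Cartier. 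Condition (5) is where the extra hypothesis enters: at any codimension-one singular point $w'$ of $W'$, $Y$ is normal at $f(w')$ by assumption, so $\nu$ is an isomorphism near $f(w')$, $C_Y$ has no component through $g(w')$, and $\nu^*\Delta$ agrees with $\Delta$ locally; hence condition (5) for the new tuple follows from that of the original.

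With both sides well defined, it suffices to compare the coefficients at each codimension-one regular point $w'$ of $W'$, since both $\Diff_{W'}(\Delta)$ and $\Diff_{W'}(\nu^*\Delta + C_Y)$ lie in $\WDiv^*_\Q(W')$. The definition of the different (Lemma-Definition \ref{Definition of Diff}) extracts the coefficient at $w'$ from the local $\Q$-Cartier pullback of $K_Y + W + \Delta$ along $f$, corrected by $K_{W'}$; the parallel construction on $Y^n$ uses the pullback of $K_{Y^n} + W^\dagger + (\nu^*\Delta + C_Y)$ along $g$. Since $f^* = g^* \circ \nu^*$ and the displayed identity identifies the two pullbacks, the coefficients match, giving the desired equality.

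The main obstacle I expect is verifying the naturality of the different under the factorization $f = \nu \circ g$: one needs that the local formula for $\Diff_{W'}$ depends only on $f^*(K_Y+W+\Delta)$ and $K_{W'}$, and is insensitive to which intermediate ambient scheme (namely $Y$ or $Y^n$) one reads the data from. This amounts to an explicit compatibility encoded by the identification $\omega_{Y^n} \cong \omega_Y \otimes_{\sO_Y} \sO_{Y^n}(-C_Y)$ at codimension one. Lemma \ref{Diff vs Cond1}, which handles the analogous comparison with the normalization of $W'$, would serve as a template for carrying out this compatibility argument rigorously.
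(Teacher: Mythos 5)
Your setup matches the paper's exactly: form the companion tuple $\mathcal{A}' = (Y^n, W'', W', j, \pi, g, \nu^*\Delta + C_Y)$ on the normalization, verify it lies in Setting \ref{setup for Diff} using $\nu^*W = W''$ (over the regular generic points of $W$) for condition (4) and the normality hypothesis at $f(w')$ for condition (5). That part is correct.

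Where the proposal falls short is the final comparison. You write that the different ``extracts the coefficient at $w'$ from the local $\Q$-Cartier pullback of $K_Y+W+\Delta$ along $f$, corrected by $K_{W'}$,'' and conclude that since $f^* = g^*\circ\nu^*$ the coefficients match. This is not how $\Diff_{W'}$ is defined. Per Lemma-Definition \ref{Definition of Diff} and the construction in Subsection \ref{Subsection AC-Diff}, the different is obtained from the image of a specific morphism $\beta_{\mathcal{A}}\colon f^*\mathcal{F}\to\mathcal{K}_{W'}$ built out of the Poincar\'e residue map and the chosen inclusions $\alpha_Y\colon\omega_Y\hookrightarrow\mathcal{K}_Y$ and $\alpha_{W'}\colon\omega_{W'}\hookrightarrow\mathcal{K}_{W'}$; it is not simply ``pullback of a $\Q$-Cartier divisor plus a correction.'' Establishing $\Diff_{W'}(\Delta)=\Diff_{W'}(\nu^*\Delta+C_Y)$ amounts to showing that the two morphisms $\beta_{\mathcal{A}}$ and $\beta_{\mathcal{A}'}$ become identified under the canonical isomorphism $f^*\mathcal{F}\cong g^*\mathcal{G}$, and this requires chasing the commutative diagrams relating trace maps, residue maps, and restriction to open subsets — precisely the content of Lemma \ref{AC-Diff vs Cond2}, to which the paper reduces the present lemma. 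You correctly flag this compatibility as ``the main obstacle,'' but pointing to Lemma \ref{Diff vs Cond1} as a template does not close it, since that lemma is itself a special case of the analogous appendix statement (Lemma \ref{AC-Diff vs Cond1}). To complete the proof, you either need to invoke Lemma \ref{AC-Diff vs Cond2} directly (as the paper does), or else carry out the diagram chase it encodes — in particular, the compatibility diagram \eqref{Trace and Residue} and the behaviour of the trace map under composition \eqref{Trace composition diagram}.
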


\begin{proof}
We write $\mathcal{A}' : =(Y^n, W'', W', j, \pi, g , \nu^*\Delta+C_Y)$, where $g: W' \to Y^n$ is the morphism induced by $f$, $W'' \subseteq Y^n$ is the reduced image of $g$, 
and $j$, $\pi$ and $\rho$ are natural morphisms such that the following diagram commutes: 
\[
\xymatrix{
 Y  & Y^n \ar^-{\nu}[l]& \\
W \ar@{^{(}->}^-{i}[u]  & W'' \ar@{^{(}->}^-{j}[u] \ar^-{\rho}[l] & W'. \ar^-{\pi}[l] \ar@/^18pt/[ll]^{\mu} \ar^-{g}[ul] \ar@/_30pt/[ull]_{f}
}\]
It is clear that $\mathcal{A}'$ satisfies the conditions (1)--(3) in Setting \ref{setup for Diff}. 
The tuple $\mathcal{A}'$ also satisfies (4), because $\nu$ is an isomorphism over the generic points of $W$, and therefore, $\nu^*W=W''$. 
By the assumption that $Y$ is normal at the image of every codimension one singular point $w'$ of $W'$, the conductor divisor $C_Y$ is trivial near $g(w')$, which implies that $\mathcal{A}'$ satisfies the condition (5) too. 
Then the assertion is a special case of Lemma \ref{AC-Diff vs Cond2}.
\end{proof}

\begin{rem}
We can relax the assumption that $Y$ is normal at the image of any codimension one singular points of $W'$ by using the terminology of AC-divisors.
See Lemma \ref{AC-Diff vs Cond2} for details.
\end{rem}

\begin{lem}\label{restriction of divisor}
Suppose that $Y$ is a scheme satisfying the condition $(1)$ in Setting \ref{setup for Diff} and $i: W \hookrightarrow Y$ be a closed immersion satisfying the condition $(2)$. 
We further assume that $W$ is a Cartier divisor (that is, $W \in \CDiv^*(Y)$) satisfying $(S_2)$ and $(G_1)$. 
Let $\Delta = \sum_i a_i E_i \in \WDiv^*_{\Q}(Y)$ be a $\Q$-Weil divisor on $Y$ whose support contains neither any generic points of $W$ nor any singular codimension one points of $W$. 
\begin{enumerate}[label=$(\arabic*)$]
\item The tuple $(Y, W, W, i, \mathrm{id}_{W},i, \Delta)$ satisfies all the conditions in Setting \ref{setup for Diff}.
\item $($\cite[Proposition 4.5 (4)]{Kol}$)$ Let $\Delta|_W \in \WDiv_\Q^*(W)$ be the restriction 
\[
\Delta|_W : = \sum_i a_i E_i|_W \in \WDiv^*_{\Q}(W)
\] of $\Delta$ to $W$, where $E_i|_W$ denotes the Weil divisor on $W$ corresponding to the scheme theoretic intersection $E_i \cap W$. 
Then 
\[
\Delta|_W = \Diff_W(\Delta).
\]
\end{enumerate}
\end{lem}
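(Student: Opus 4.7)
The proof splits into a verification of the five conditions of Setting \ref{setup for Diff} followed by a direct computation of the different via adjunction for a Cartier divisor.

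For (1), I would check each condition in turn. Conditions (1) and (2) are restatements of the hypotheses on $Y$ and on $i : W \hookrightarrow Y$, and condition (3) is trivial: $\mathrm{id}_W : W \to W$ is finite and birational, and $W$ is $(S_2)$ and $(G_1)$ by assumption. For condition (4), the ``no common components'' clause is built into the hypothesis that the support of $\Delta$ contains no generic point of $W$. To verify $\Q$-Cartierness of $K_Y + \Delta + W$ at a codimension one point $w$ of $W$, I would split into two cases. If $W$ is regular at $w$, then because $W$ is Cartier in $Y$, the local ring $\sO_{Y,w}$ is regular of dimension two and every Weil divisor is Cartier at $w$. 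If $W$ is singular at $w$, then the hypothesis forces $\Delta = 0$ in a neighborhood of $w$, and by Cohen-Macaulay adjunction (using $(S_2)$ on $Y$ so that $\sO_{Y,w}$ is CM of dimension two) we have $\omega_W \cong \omega_Y/t\omega_Y$ locally, where $t$ is a local equation of the Cartier divisor $W$. The $(G_1)$ hypothesis on $W$ makes $\omega_{W,w}$ invertible, so by Nakayama $\omega_{Y,w}$ is a cyclic maximal Cohen-Macaulay module, hence free of rank one, i.e.\ $Y$ is Gorenstein at $w$ and $K_Y + W$ is Cartier in a neighborhood. Condition (5) is then immediate: on that same neighborhood $\Delta = 0$, and $W$ is Cartier globally.

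For (2), I would use the defining characterization of the different from Lemma-Definition \ref{Definition of Diff}: at codimension one points of $W$, $\Diff_W(\Delta)$ is the unique $\Q$-Weil divisor on $W$ satisfying the adjunction identity $K_W + \Diff_W(\Delta) = i^*(K_Y + W + \Delta)$. Since $W$ is a Cartier divisor in $Y$, the standard adjunction formula yields $i^*(K_Y + W) = K_W$. For each component $E_i$ of $\Delta$, the support hypothesis ensures that $E_i$ meets $W$ only at regular codimension one points of $W$, where $Y$ is regular by the argument above; hence $E_i$ is locally Cartier near each such intersection, and $i^*E_i$ coincides with the Weil divisor associated to the scheme-theoretic intersection $E_i \cap W$. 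Summing over $i$ gives $i^*(K_Y + W + \Delta) = K_W + \Delta|_W$, and the defining property of the different then forces $\Diff_W(\Delta) = \Delta|_W$.

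The most delicate step is condition (4) at singular codimension one points of $W$, where one must bootstrap the Gorenstein-in-codimension-one hypothesis on $W$ to $\Q$-Cartierness of $K_Y$ on $Y$ via Cartier-divisor adjunction. Once this is in hand, the identification in (2) is essentially forced by the defining relation of the different, since the support hypothesis on $\Delta$ ensures the Cartier/Weil distinction disappears at every codimension one point of $W$ where $\Delta$ is supported.
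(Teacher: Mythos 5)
Your proof is correct and follows essentially the same route as the paper's argument, which is delegated to Lemma \ref{restriction of AC-divisor} in the appendix: you split at codimension-one points of $W$ into regular and singular cases, use $(G_1)$ of $W$ plus adjunction for the Cartier divisor $W$ (via Nakayama on the maximal Cohen--Macaulay module $\omega_Y(W)$) to get $K_Y+W$ Cartier at singular points, and at regular points note that $Y$ is regular so $\Delta$ is locally $\Q$-Cartier. Your use of the adjunction identity $K_W + \Diff_W(\Delta) = i^*(K_Y+W+\Delta)$ and the residue isomorphism $i^*(K_Y+W)=K_W$ is the high-level reformulation of what the paper verifies concretely through the maps $\beta_{\mathcal{A}}$ and Lemma \ref{residue isom}.
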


\begin{proof}
This is just a reformulation of Lemma \ref{restriction of AC-divisor}.
\end{proof}

\subsection{Deformations}

In this subsection, we recall some basic terminology from the theory of deformations.

\begin{defn}
Let $X$ be an algebraic scheme over a field $k$.  Suppose that $T$ is a $k$-scheme and $t \in T$ is a $k$-rational point. 
\begin{enumerate}
\item
A \textit{deformation} of $X$ over $T$ with reference point $t$ is a pair $(\mathcal{X}, i)$ of a scheme $\mathcal{X}$ that is flat and of finite type over $T$ and an isomorphism $i:X  \xrightarrow{\ \sim\ }  \mathcal{X} \times_T \Spec \kappa(t)$ of $k$-schemes. 
\item  Let $Z$ be a closed subscheme of $X$. 
A \textit{deformation} of the pair $(X, Z)$ over $T$ with reference point $t$ is a quadruple $(\mathcal{X}, i,\mathcal{Z}, j)$ where $(\mathcal{X}, i)$ is a deformation of $X$ over $T$ with reference point $t$, $\mathcal{Z}$ is a closed subscheme of $\mathcal{X}$ that is flat over $T$ and $j$ is an isomorphism $j:Z \xrightarrow{\ \sim\ } \mathcal{Z} \times_{\mathcal{X}} X$ of $k$-schemes. 
\end{enumerate}
\end{defn}

In the later sections, we will use the following setup to consider some problems on deformations of singularities. 

\begin{setting}\label{local setting}
Suppose that $k$ is an algebraically closed field of characteristic zero, $X$ is a reduced $(S_2)$ and $(G_1)$ scheme of finite type over $k$,  $T$ is an irreducible scheme over $k$ with generic point $\eta$ and $t \in T$ is a closed point. 
Let $(\mathcal{X},i)$ be a deformation of $X$ over $T$ with reference point $t$ such that $\mathcal{X}$ is a reduced $(S_2)$ and $(G_1)$ scheme. 
Let $\mathcal{D} \in \WDiv^*_\Q(\mathcal{X})$ be an effective $\Q$-Weil divisor on $\mathcal{X}$ whose support does not contain any generic points of the closed fiber $X$ nor any singular codimension one points of $X$. 
Let $\ba \subseteq \sO_{\mathcal{X}}$ be a coherent ideal sheaf such that $\ba \sO_X$ is nonzero and $\lambda > 0$ be a real number.
\end{setting}

\section{Deformations of valuatively klt singularities}
In this section, we prove the inversion of adjunction for valuatively klt singularities. 
As a corollary, we show that valuatively klt singularities are invariant under a deformation over a smooth base, which is a generalization of a result of Esnault-Viehweg \cite{EV} on deformations of klt singularities. 

Throughout this section, we say that $(R, \Delta, \ba^{\lambda})$ is a \textit{triple} of equal characteristic zero if $(R,\m)$ is an excellent normal local ring of equal characteristic zero with a  dualizing complex $\omega_R^{\bullet}$, $\Delta$ is an effective $\Q$-Weil divisor on $\Spec R$, $\ba$ is a nonzero ideal of $R$ and $\lambda > 0$ is a real number.

\begin{prop}\label{adjoint ideal twists}
Suppose that $(R, \Delta, \ba^{\lambda})$ is a triple of equal characteristic zero and $D$ is a reduced Weil divisor on $X: = \Spec R$ 
such that $\ba$ is trivial at any generic points of $D$.  
Let $A$ be an effective Weil divisor on $X$ linearly equivalent to $- K_X-D$ such that  $B:=A-\Delta$ is also effective and $A$ has no common components with $D$.  
Fix an integer $m \ge 1$ such that $m \Delta$ is an integral Weil divisor.
\begin{enumerate}[label=$(\arabic*)$]
\item $\adj_D(X, A+D, \ba^\lambda \sO_X(-mB)^{1-1/m})$ is contained in $\sO_X(-m B)$. 
\item The following conditions are equivalent to each other.
\begin{enumerate}[label=\textup{(\alph*)}]
\item $(X,\Delta+D, \ba^{\lambda})$ is $m$-weakly valuatively plt along $D$.
\item For every nonzero coherent ideal $\bb \subseteq \sO_X$ contained in $\sO_X(-mB)$ that is trivial at any generic points of $D$, we have 
\[
\bb \subseteq \adj_D(X, A+D, \ba^\lambda \bb^{1-1/m}) .
\]
\item For every nonzero principal ideal $(r) \subseteq \sO_X$ contained in $\sO_X(-mB)$ that is trivial at any generic points of $D$, we have 
\[
r \in \adj_D(X, A+D, \ba^\lambda (r)^{1-1/m}) .
\] 
\item For every anti-effective $\Q$-Weil divisor $\Gamma$ on $X$ such that $m(K_X+ \Delta + \Gamma +D)$ is Cartier and $\Gamma$ has no common components with $D$, the triple $(X, \Delta + \Gamma +D , \ba^{\lambda})$ is subplt along $D$, that is, $\sO_X \subseteq \adj_D(X, \Delta+\Gamma+D, \ba^{\lambda})$.
\item $\adj_D(X, A+D, \ba^\lambda \sO_X(-m B)^{1-1/m}) =\sO_X(-mB)$.
\end{enumerate}
\end{enumerate}
\end{prop}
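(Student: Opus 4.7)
The plan is to fix a log resolution $f:Y\to X$ of $(X,\Delta+A+D,\ba\cdot\sO_X(-mB))$ separating the components of $D$, and to convert both (1) and (2) into numerical inequalities for coefficients of $\R$-divisors on $Y$. Write $\ba\sO_Y=\sO_Y(-F)$ and $\sO_X(-mB)\sO_Y=\sO_Y(-G)$, so that $G=f^{\natural}(mB)$. Since $A\sim -K_X-D$, choose a rational function $\phi\in K(X)^{\times}$ with $K_X+A+D=\Div_X(\phi)$; then $(A+D)_Y=\Div_Y(\phi)-K_Y$ and the adjoint ideal appearing in (1) equals $f_*\sO_Y(-\lfloor\Theta\rfloor)$ where
\[
\Theta=\Div_Y(\phi)-K_Y-f^{-1}_*D+\lambda F+\bigl(1-\tfrac{1}{m}\bigr)G.
\]
A parallel computation, using $m(K_X+\Delta+D)=\Div_X(\phi^m)-mB$ together with additivity of $\ord_E^{\natural}$ under the Cartier shift by $\Div_X(\phi^m)$, gives $\ord_E^{\natural}(-m(K_X+\Delta+D))=\ord_E(G)-m\ord_E(\Div_Y(\phi))$.

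For (1), since $f_*\sO_Y(-G)=\sO_X(-mB)$ and $\sO_X(-mB)$ is reflexive of rank one, it suffices to verify $\ord_{E_0}(r)\ge\ord_{E_0}(mB)$ for each section $r$ of the adjoint ideal and each codimension-one prime $E_0$ of $X$. Setting $E=f^{-1}_*E_0$ and using $\ord_E(G)=m\ord_{E_0}(B)$ together with $\ord_E((A+D)_Y)=\ord_{E_0}(A+D)$, I split into cases according to whether $E_0$ is a component of $D$, of $A$, or of neither; in each case the bound reduces, via $B=A-\Delta$ and the effectivity of $\lambda F$, $A$, $D$, to arithmetic that holds because $A$ and $D$ have no common components.

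For (2), I prove the chain (a)$\Rightarrow$(e)$\Rightarrow$(b)$\Rightarrow$(c)$\Leftrightarrow$(d)$\Rightarrow$(a). The step (a)$\Rightarrow$(e) uses the formula in the first paragraph: condition (a) becomes $\lfloor\ord_E(\Theta)\rfloor\le\ord_E(G)$ for every prime divisor $E$ on $Y$ not in $f^{-1}_*D$, with the divisors in $f^{-1}_*D$ handled by the hypothesis that $\ba$ is trivial at each generic point of $D$ (forcing $\ord_E(F)=0$ for $E=f^{-1}_*D_j$); combined with (1), this pins the adjoint ideal down to $\sO_X(-mB)$. The implication (e)$\Rightarrow$(b) is monotonicity of $\adj_D$ in its ideal argument ($\bb\subseteq\sO_X(-mB)$ gives $\adj_D(\cdots\bb^{1-1/m})\supseteq\adj_D(\cdots\sO_X(-mB)^{1-1/m})=\sO_X(-mB)\supseteq\bb$), and (b)$\Rightarrow$(c) is trivial. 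For (c)$\Leftrightarrow$(d) I set up a bijection $r\leftrightarrow\Gamma=B-\tfrac{1}{m}\Div_X(r)$ between principal ideals as in (c) and anti-effective $\Q$-Weil divisors as in (d), and establish the identity
\[
r\cdot\adj_D(X,\Delta+\Gamma+D,\ba^\lambda)=\adj_D(X,A+D,\ba^\lambda(r)^{1-1/m})
\]
on the log resolution by substituting $(\Delta+\Gamma+D)_Y=\Div_Y(\phi)-\tfrac{1}{m}\Div_Y(r)-K_Y$ and absorbing the integral divisor $\Div_Y(r)$ through the floor; under this identity, (c) ($r\in\adj_D(\cdots(r)^{1-1/m})$) and (d) ($1\in\adj_D(\cdots\Gamma\cdots)$) are the same statement. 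Finally, (d)$\Rightarrow$(a) is carried out one divisor at a time: given a prime $E$ on $Y$ not in $f^{-1}_*D$, I use that $R$ has infinite residue field (characteristic zero) to take a generic linear combination of local generators of $\sO_X(-mB)$ and produce $r\in\sO_X$ with $(r)\subseteq\sO_X(-mB)$, with $(r)$ a unit at each generic point of $D$, and with $\ord_E(\Div_Y r)=\ord_E(G)$; plugging this $r$ into (c) and rearranging yields $a_{m,E}^{+}(X,\Delta+D,\ba^\lambda)>-1$.

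The technically delicate point is the identity underlying (c)$\Leftrightarrow$(d), whose verification requires careful tracking of the integral divisor $\Div_Y(r)$ through the floor, together with the construction of $r$ in (d)$\Rightarrow$(a), which must simultaneously achieve the sharp valuation $\ord_E(G)$ at the chosen $E$, lie in $\sO_X(-mB)$ globally, and remain a unit at every generic point of $D$; the compatibility of the last two constraints comes from $\sO_X(-mB)$ being locally trivial at each generic point of $D$ because $B$ has no common component with $D$.
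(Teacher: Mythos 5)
Your overall setup (fixing a log resolution of $(X,\Delta+A+D,\ba\,\sO_X(-mB))$, using the identity $(A+D)_Y=(\Delta+D)^+_{m,Y}+\tfrac1m G$ with $G=f^{\natural}(mB)$, and converting everything to coefficient inequalities) is the same as the paper's, and your arguments for (1), for (a)$\Rightarrow$(e), for the correspondence (c)$\Leftrightarrow$(d), and for (d)$\Rightarrow$(a) are all sound. Your (d)$\Rightarrow$(a) via a generic $r$ with $\ord_E(\Div_Y r)=\ord_E(G)$ is a legitimate alternative to the paper's route, which instead goes (c)$\Rightarrow$(e) by taking a generating set of $\sO_X(-mB)$ and then (e)$\Rightarrow$(a) via relative global generation of $\sO_Y(-f^{\natural}(mB))$.

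However, the step (e)$\Rightarrow$(b) is wrong: the monotonicity of $\adj_D$ in its ideal argument goes in the \emph{opposite} direction from what you use. If $\bb\subseteq\bb'$, then $\bb\,\sO_Y=\sO_Y(-G)$ with $G\ge G'$ where $\bb'\sO_Y=\sO_Y(-G')$, so the floor $\lfloor(A+D)_Y-f^{-1}_*D+\lambda F+(1-\tfrac1m)G\rfloor$ only gets larger and hence $\adj_D(X,A+D,\ba^{\lambda}\bb^{1-1/m})\subseteq\adj_D(X,A+D,\ba^{\lambda}\bb'^{1-1/m})$. Thus $\bb\subseteq\sO_X(-mB)$ gives $\adj_D(\cdots\bb^{1-1/m})\subseteq\sO_X(-mB)$, which is an upper bound, not the lower bound $\bb\subseteq\adj_D(\cdots\bb^{1-1/m})$ you need. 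With that link removed, your chain (a)$\Rightarrow$(e) and (b)$\Rightarrow$(c)$\Leftrightarrow$(d)$\Rightarrow$(a) leaves (b) unreachable, so the equivalence is not established.

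To close the loop you should prove (a)$\Rightarrow$(b) directly (as the paper does): for $\bb\subseteq\sO_X(-mB)$ with $\bb\sO_Y=\sO_Y(-G)$ and $\sO_X(-mB)\sO_Y=\sO_Y(-H)$, write
\[
\adj_D(X,A+D,\ba^{\lambda}\bb^{1-1/m})=f_*\sO_Y\Bigl(-G-\bigl\lfloor(\Delta+D)^+_{m,Y}-f^{-1}_*D+\lambda F-\tfrac1m(G-H)\bigr\rfloor\Bigr),
\]
and use $H\le G$ together with the consequence of (a) that $\lfloor(\Delta+D)^+_{m,Y}-f^{-1}_*D+\lambda F\rfloor\le0$ to conclude the inner floor is $\le0$, so the right side contains $f_*\sO_Y(-G)\supseteq\bb$. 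This is the genuinely nontrivial implication; (e)$\Rightarrow$(b) is not a formal monotonicity consequence.
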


\begin{proof}
(1) Let $U \subseteq X$ denote the locus where $mB$ is Cartier.
Since $\sO_X(-mB)$ is reflexive and $U$ is an open subset of $X$ whose complement has codimension at least two, it suffices to show that 
\[
\adj_D(X, A+D, \ba^{\lambda} \sO_X(-mB)^{1-1/m})|_U \subseteq \sO_U(-m B|_U).
\]
However, it follows from the fact that $\sO_U(-m B|_U)$ is invertible that 
\begin{align*}
\adj_D(X, A+D, \ba^{\lambda} \sO_X(-mB)^{1-1/m})|_U 
&\subseteq \adj_{D|_U}(U, A|_U +D|_U, \ba|_U^\lambda \sO_U(-mB|_U)^{1-1/m})\\
&= \adj_{D|_U} (U, A|_U +\frac{m-1}{m} (mB|_U) +D|_U, \ba|_U^\lambda)\\
&= \adj_{D|_U} (U, \Delta|_U + m B|_U +D|_U, \ba|_U^\lambda)\\
&= \adj_{D|_U} (U, \Delta|_U +D|_U, \ba|_U ^\lambda) \otimes_{\sO_U} \sO_U(-m B|_U)\\
& \subseteq \sO_U(-m B|_U). 
\end{align*}

(2) First we prove the implication $($a$) \Rightarrow ($b$)$.
Take a log resolution $f: Y \to X$ of $(X, \Delta+A+D,   \ba \bb \sO_X(-mB))$ separating the components of $D$, and write 
\[
\ba \sO_Y= \sO_Y(-F), \ \bb \sO_Y=\sO_Y(-G) \textup{ and } \sO_X(-mB) \sO_Y = \sO_Y(-H).
\]
We also set
\begin{align*}
(A+D)_Y &: = f^*(K_X+A+D)-K_Y, \\
(\Delta+D)_{m,Y}^+ &:=  - \frac{f^{\natural}(-m(K_X+\Delta+D))}{m} - K_Y.
\end{align*}
Since 
\begin{align*}
H-m(K_Y+(A+D)_Y) &= f^{\natural}(mB) - f^*(m(K_X+A+D)) \\
&= f^{\natural}(m(B - (K_X+A+D)))\\ 
&= f^{\natural}(-m(K_X+\Delta+D)),
\end{align*}
one has 
\[
(\Delta+D)_{m,Y}^+ = -\frac{1}{m}H + (A+D)_Y.
\]
Therefore, we obtain 
\begin{align*}
&\adj_D(X,A+D, \ba^{\lambda} \bb^{1-1/m})\\
=& f_* \sO_Y\left(-\left\lfloor (A+D)_Y - f^{-1}_*D + \lambda F + \left(1-\frac{1}{m}\right) G \right\rfloor\right) \\
=& f_* \sO_Y\left(- \left\lfloor  (\Delta+D)_{m,Y}^{+} + \frac{1}{m}H - f^{-1}_*D + \lambda F + \left(1-\frac{1}{m}\right) G \right\rfloor\right) \\
=& f_* \sO_Y\left(-G - \left\lfloor (\Delta+D)^+_{m,Y} - f^{-1}_*D + \lambda F -\frac{1}{m}(G-H) \right\rfloor \right).
\end{align*}
Combining this with the inequalities $H \le G$ and $\lfloor (\Delta+D)^+_{m,Y} - f^{-1}_*D + \lambda F \rfloor \le 0$ yields the inclusion 
\[
\adj_D(X, A+D, \ba^\lambda \bb^{1-1/m}) \supseteq f_* \sO_Y(-G) \supseteq \bb. 
\]

Next we prove the implication (e) $\Rightarrow$ (a). 
An argument similar to the above and (e) show that 
\begin{align*}
f_* \sO_Y(-H - \lfloor (\Delta+D)^+_{m,Y} - f^{-1}_*D + \lambda F \rfloor )
&=\adj_D(X,A +D, \ba^{\lambda} \sO_Y(-mB)^{1-1/m})\\
&=\sO_X(-mB)\\
&=f_*\sO_Y(-H). 
\end{align*}
Since $\sO_Y(-H)$ is globally generated with respect to $f$, we conclude that 
\[
\lfloor (\Delta+D)^+_{m,Y} - f^{-1}_*D  + \lambda F \rfloor \le 0,
\]
which proves (a) by Lemma \ref{single resolution}.

The implication (b) $\Rightarrow$ (c) is obvious.
For (c) $\Rightarrow$ (e), take a system of generators $r_1, \dots, r_n$ of the ideal $\sO_X(- m B) \subseteq R$.
Since $B$ has no common components with $D$, replacing the generators by their linear combinations, we may assume that the principal ideal $(r_i)$ is trivial at any generic points of $D$ for all $i$.
Then (e) follows from (1) and an application of (c) with $r=r_i$. 

For (c) $\Rightarrow$ (d), take an anti-effective $\Q$-Weil divisor $\Gamma$ such that $m(K_X+\Delta+\Gamma +D)$ is Cartier and $\Gamma$ has no common components with $D$.  
Since $R$ is local and $m(K_X+\Delta+D)$ is linearly equivalent to $-mB$, we may write
\[
-m B +m \Gamma  + \Div_X(r) = 0,
\]
where $r$ is an element of $\mathrm{Frac}(R)$.
Since $\Gamma$ is anti-effective and $B-\Gamma$ has no common components with $D$, the principal ideal $(r)$ is contained in $\sO_X(-mB)$ and is trivial at any generic points of $D$. 
Therefore, applying (c) to this principal ideal, we obtain 
\begin{align*}
r \in \adj_D(X, A+D, \ba^{\lambda} (r)^{1-1/m}) & = \adj_D(X, \Delta+ \Gamma + \Div_X(r) +D, \ba^{\lambda}) \\
&= r \cdot \adj_D(X, \Delta+ \Gamma +D, \ba^{\lambda}) ,
\end{align*}
which implies that 
\[
1 \in \adj_D(X, \Delta+ \Gamma +D, \ba^{\lambda}).
\]

For the converse implication (d) $\Rightarrow$ (c), just reverses the above argument. 
\end{proof}

The following theorem is the main result of this section, which shows the inversion of adjunction for valuatively klt singularities. 
\begin{thm}\label{vklt cut}
Suppose that $(R,\Delta, \ba^\lambda)$ is a triple of equal characteristic zero and $h$ is a nonzero element in $R$ such that $S:= R/(h)$ is normal. 
We assume in addition that $Z:=\Spec S$ is not contained in the support of $\Delta$ and $\ba$ is not contained in the ideal $(h)$.
\begin{enumerate}[label=$(\arabic*)$]
\item Let $m \ge 1$ be an integer such that $m \Delta$ is an integral Weil divisor.
If the triple $(Z, \Delta|_Z, (\ba S)^\lambda)$ is $m$-weakly valuatively klt, then $(X, \Delta +Z, \ba^{\lambda})$ is $m$-weakly valuatively plt along $Z$.
\item If $(Z, \Delta|_Z, (\ba S)^\lambda)$ is valuatively klt, then $(X, \Delta +Z, \ba^{\lambda})$ is valuatively plt along $Z$, and in particular, $(X, \Delta, \ba^{\lambda})$ is valuatively klt.  
\end{enumerate}
\end{thm}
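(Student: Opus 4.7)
The plan is to reduce part (2) to part (1) via Proposition \ref{weak sense vs strong sense}, and to prove part (1) by verifying the characterization \textup{(a)} $\Leftrightarrow$ \textup{(d)} of Proposition \ref{adjoint ideal twists}. The bridge from $Z$ to $X$ will be the restriction exact sequence for adjoint ideals, combined with Nakayama's lemma.

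For the reduction of (2) to (1), I would pick a coherent ideal $I \subseteq R$ whose zero locus equals the non-$\Q$-Cartier locus of $K_X+\Delta+Z$ in $X$. Since $X$ is normal (hence regular in codimension one), this locus has codimension $\ge 2$ and contains no generic point of $Z$, so in particular $IS \ne 0$. By adjunction, the zero locus of $IS$ in $Z$ contains the non-$\Q$-Cartier locus of $K_Z+\Delta|_Z = (K_X+\Delta+Z)|_Z$. Applying Proposition \ref{weak sense vs strong sense} to the valuatively klt triple $(Z,\Delta|_Z,(\ba S)^{\lambda})$, I obtain $\epsilon>0$ such that $(Z,\Delta|_Z,(\ba S)^{\lambda}(IS)^{\epsilon})$ is $m$-weakly valuatively klt for every admissible $m$. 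Feeding this into part (1) gives that $(X,\Delta+Z,\ba^{\lambda}I^{\epsilon})$ is $m$-weakly valuatively plt along $Z$ for each such $m$, and a second application of Proposition \ref{weak sense vs strong sense} on $X$ delivers the valuative pltness claimed in (2).

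For part (1), I fix $m$ with $m\Delta$ integral and take an arbitrary anti-effective $\Q$-Weil divisor $\Gamma$ on $X$ with $m(K_X+\Delta+\Gamma+Z)$ Cartier and $\Gamma$ sharing no component with $Z$. By the equivalence \textup{(a)} $\Leftrightarrow$ \textup{(d)} of Proposition \ref{adjoint ideal twists}(2), it suffices to prove that $(X,\Delta+\Gamma+Z,\ba^{\lambda})$ is subplt along $Z$, i.e.\ $\adj_Z(X,\Delta+\Gamma+Z,\ba^{\lambda})=\sO_X$. Since $Z$ is Cartier on the normal scheme $X$, Lemma \ref{restriction of divisor} identifies $\Diff_Z(\Delta+\Gamma)$ with the naive restriction $\Delta|_Z+\Gamma|_Z$; moreover $m(K_Z+\Delta|_Z+\Gamma|_Z)$ is Cartier and $\Gamma|_Z$ is anti-effective. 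Applying the implication \textup{(a)} $\Rightarrow$ \textup{(d)} of Proposition \ref{adjoint ideal twists}(2) to $(Z,\Delta|_Z,(\ba S)^{\lambda})$ (which is $m$-weakly valuatively klt by hypothesis) with correction $\Gamma|_Z$ yields $\mathcal{J}(Z,\Delta|_Z+\Gamma|_Z,(\ba S)^{\lambda})=\sO_Z$.

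To transfer this equality from $Z$ to $X$, I invoke the restriction exact sequence for adjoint ideals
\[
0 \to \mathcal{J}(X,\Delta+\Gamma+Z,\ba^{\lambda}) \to \adj_Z(X,\Delta+\Gamma+Z,\ba^{\lambda}) \to \mathcal{J}(Z,\Delta|_Z+\Gamma|_Z,(\ba S)^{\lambda}) \to 0,
\]
which is available because $K_X+\Delta+\Gamma+Z$ is $\Q$-Cartier. Surjectivity onto $\sO_Z$ then gives $\adj_Z(X,\Delta+\Gamma+Z,\ba^{\lambda})+(h)=R$, and since $h$ lies in the maximal ideal of $R$ (as $S=R/(h)$ is nonzero), Nakayama's lemma applied to the finitely generated $R$-module $R/\adj_Z(X,\Delta+\Gamma+Z,\ba^{\lambda})$ forces this quotient to vanish, yielding $\adj_Z(X,\Delta+\Gamma+Z,\ba^{\lambda})=\sO_X$. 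The hard part will be justifying the restriction exact sequence in this subboundary setting, as $\Delta+\Gamma$ need not be effective; this requires extending the standard adjoint-ideal exact sequence (e.g.\ from \cite{Laz}) to allow anti-effective corrections to the boundary, which is the delicate technical step on which the whole argument hinges.
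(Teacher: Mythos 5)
Your reduction of (2) to (1) via Proposition \ref{weak sense vs strong sense} is essentially the paper's argument (the paper uses an ideal cutting out the singular loci of $X$ and $Z$, which contain the non-$\Q$-Cartier loci you work with, but the mechanism is the same). For part (1), however, you take a genuinely different route through the characterization, and that route has a real gap.

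Concretely: you run the argument via (a)$\Leftrightarrow$(d) of Proposition \ref{adjoint ideal twists}, which forces you to restrict the triple $(X,\Delta+\Gamma+Z,\ba^\lambda)$ with a \emph{non-effective} boundary $\Delta+\Gamma$ to $Z$, and you need surjectivity of $\adj_Z(X,\Delta+\Gamma+Z,\ba^\lambda)\to\mathcal{J}(Z,\Delta|_Z+\Gamma|_Z,(\ba S)^\lambda)$. You flag this yourself as ``the delicate technical step on which the whole argument hinges,'' and indeed it is: the restriction theorem of \cite{Ta} (and the Lazarsfeld exact sequence) is stated for effective boundaries, and when $\Gamma|_Z$ has negative coefficients the right-hand term is a fractional ideal strictly containing $\sO_Z$, so even the formulation of the sequence and the subsequent Nakayama step need care. (A side remark: the conclusions you want are $\sO_Z\subseteq\mathcal{J}(Z,\cdots)$ and $\sO_X\subseteq\adj_Z(X,\cdots)$, not equalities --- with anti-effective $\Gamma$ both sides can be larger than the structure sheaf.) The paper's proof of (1) is specifically engineered to sidestep exactly this issue: it routes through (a)$\Leftrightarrow$(e) instead of (d), keeps the boundary divisor $A+Z$ effective, and encodes the ``anti-effective correction'' as the ideal exponent $\bb^{1-1/m}$ with $\bb=\sO_X(-mB)$. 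Then the restriction theorem of \cite{Ta} applies verbatim to give $J=IS$, and Nakayama is applied inside the honest ideal $\bb\subseteq R$, not inside a fractional ideal. The two approaches are equivalent in spirit --- indeed Proposition \ref{adjoint ideal twists}(2)(c)$\Leftrightarrow$(d) is precisely the ``twist'' that translates between the subboundary picture (your (d)) and the effective-plus-ideal picture (the paper's (b)/(c)/(e)) --- but as written your version relies on an unproved extension of the restriction theorem, whereas the paper's does not. If you want to make your variant airtight, you should either prove the restriction theorem for $\Q$-Cartier subboundary adjoint ideals (it should follow from the same local vanishing, since $f^*(K_X+\Delta+\Gamma+Z)$ is still $f$-numerically trivial), or first convert $\Gamma$ into a principal-ideal twist exactly as in the proof of (c)$\Rightarrow$(d) and then apply \cite{Ta}; the latter essentially collapses your proof into the paper's.
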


\begin{proof}
(1) Since $X$ is affine and Gorenstein at the generic point of $Z$, we can take an effective Weil divisor $A$ on $X$ linearly equivalent to $-K_X$ such that $B:= A -\Delta$ is effective and $\Supp A$ does not contain $Z$.
Take an integer $m \ge 1$ such that $m (K_Z + \Delta|_Z)$ is Cartier.
Set
\begin{align*}
I &: = \adj_Z(X, A+Z, \ba^\lambda \bb^{1-1/m}) \subseteq R, \\ 
J &:= \mathcal{J}(Z, A|_Z, (\ba S)^\lambda (\bb S)^{1-1/m}) \subseteq S,
\end{align*}
where $\bb : = \sO_X(-mB) \subseteq R$.

Since $A|_Z$ is linearly equivalent to $-K_Z$, $B|_Z=A|_Z - \Delta|_Z$ and $\bb S \subseteq \sO_Z(-m B|_Z)$, 
we apply Proposition \ref{adjoint ideal twists} (2) (a)$\Rightarrow$(b) with $X=Z$ and $D=\emptyset$ to deduce that 
\[
\bb S \subseteq J = IS, 
\]
where the last equality is a consequence of the restriction theorem \cite[Theorem 1.5]{Ta}.\footnote{\cite[Theorem 1.5]{Ta} is formulated for varieties, but the same statement for excellent $\Q$-schemes is obtained by using \cite[Theorem A]{Mur} instead of the local vanishing theorem.}
It follows from a combination of the inclusion $\bb S \subseteq IS$ with Proposition \ref{adjoint ideal twists} (1) that 
\[
I \subseteq \bb \subseteq I + \bb \cap (h).
\]
By assumption, $\Div_X(h)=Z$ is a prime divisor on $X$, which is not an irreducible component of $B$. Thus, $\bb \cap (h)=h(\bb:_R(h))=h \bb \subseteq \m \bb$, so that $\bb=I+\m \bb$. 
By Nakayama's lemma, we have $I=\bb$, which completes the proof by using Proposition \ref{adjoint ideal twists} (2) (e)$\Rightarrow$(a). 

(2) Take an ideal $I \subseteq \sO_X$ such that $I \sO_Z$ is nonzero and the closed subset $V(I) \subseteq X$ contains the singular locus of $X$ and that of $Z$.
The assertion then follows from (1), Proposition \ref{weak sense vs strong sense} and Remark \ref{valuative implication}. 
\end{proof}

\begin{rem}
The no boundary case, that is, the case where $\Delta=0$ and $\ba=R$, of Theorem \ref{vklt cut} (2) was originally claimed in \cite[Theorem 3.8]{Chi}, but there is an error in the proof.
Our proof is completely different from the one given there. 
  \end{rem}

\begin{cor}\label{vklt cut2}
Suppose that $(R,\Delta, \ba^\lambda)$ is a triple of equal characteristic zero and $h_1, h_2,$ $\dots, h_r$ forms a regular sequence of $R$ such that $S:= R/(h_1, \dots, h_r)$ is normal. 
We assume in addition that $Z:=\Spec S$ is not contained in the support of $\Delta$ and $\ba$ is not contained in the ideal $(h_1, \dots, h_r)$.
If $(Z, \Delta|_Z, (\ba S)^\lambda)$ is valuatively klt, then so is $(X, \Delta, \ba^{\lambda})$.
\end{cor}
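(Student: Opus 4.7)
The plan is to proceed by induction on $r$. The base case $r = 1$ is precisely Theorem \ref{vklt cut}(2). For $r \ge 2$, I would assume the corollary for regular sequences of length $r - 1$ and seek a single element $g \in (h_1, \dots, h_r) \setminus \m (h_1, \dots, h_r)$ with the following three properties:
\begin{enumerate}[label=\textup{(\roman*)}]
\item $R/(g)$ is a normal local ring;
\item after relabeling, $(g, h_2, \dots, h_r) = (h_1, \dots, h_r)$, so that the images $\bar h_2, \dots, \bar h_r$ in $R' := R/(g)$ form a regular sequence of length $r-1$ with $R'/(\bar h_2, \dots, \bar h_r) \cong S$;
\item $\Supp \Delta$ does not contain $V(g)$, and $\ba \not\subseteq (g)$.
\end{enumerate}

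Given such $g$, the triple $(R', \Delta|_{R'}, (\ba R')^{\lambda})$ together with the regular sequence $\bar h_2, \dots, \bar h_r$ would satisfy the hypotheses of the corollary for length $r - 1$: $\Delta|_{R'}$ further restricted to $Z$ equals $\Delta|_Z$, and $\ba R'$ pushes forward to $\ba S$, so the hypothesis of valuative klt-ness on $(Z, \Delta|_Z, (\ba S)^{\lambda})$ carries over. The inductive hypothesis would then yield that $(R', \Delta|_{R'}, (\ba R')^{\lambda})$ is valuatively klt, and a final application of Theorem \ref{vklt cut}(2) with the single element $g$ would conclude that $(R, \Delta, \ba^{\lambda})$ is valuatively klt.

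The only substantive point is the existence of such $g$. I would take $g = \sum_{i=1}^{r} \alpha_i h_i$ to be a generic linear combination. Writing $k = R/\m$ (which is infinite, as $R$ has equal characteristic zero), the coefficient vector $(\alpha_1 \bmod \m, \dots, \alpha_r \bmod \m) \in k^r$ can be chosen in a suitable Zariski open subset: condition (ii) requires only that some $\alpha_i$ be a unit, and condition (iii) excludes finitely many codimension-one conditions (the components of $\Supp \Delta$ passing through $Z$, and the minimal primes of $\ba$). The key content is condition (i): since $R$ is excellent normal of equal characteristic zero and $S = R/(h_1,\dots,h_r)$ is already normal, one expects a generic member of the linear system $(h_1, \dots, h_r)$ to cut out a normal hypersurface through $Z$. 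In the essentially-of-finite-type setting this is a direct consequence of Flenner's Bertini theorem for normality; the general excellent case can be reduced to it by passing to the completion via Cohen structure theory.

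The main obstacle will be this Bertini step. Since $R$ need not be Cohen--Macaulay, a single element of a regular sequence need not cut out an $(S_2)$---let alone normal---quotient in general, so one cannot simply take $g = h_1$. What saves the argument is that the hypothesis that $S = R/(h_1, \dots, h_r)$ is itself normal places a strong constraint on the ideal $(h_1, \dots, h_r)$, and it is precisely this constraint that should force a generic hyperplane section $V(g)$ through $Z$ to inherit normality.
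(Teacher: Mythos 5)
Your proposal takes essentially the same route as the paper, which simply says ``repeated applications of Theorem \ref{vklt cut}(2)'' and leaves the choice of intermediate slices entirely implicit. You are right to flag that this is a genuine subtlety: since $R$ need not be Cohen--Macaulay, one cannot simply take $g = h_1$, and the repeated application only makes sense after replacing the $h_i$ by a generating set of $(h_1,\dots,h_r)$ for which all intermediate quotients are normal. Your inductive scheme with a generic linear combination $g = \sum \alpha_i h_i$ is the natural way to make this precise, and conditions (ii) and (iii) are indeed open and nonempty for the reasons you state.

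The point that deserves more care is exactly the one you call the main obstacle. The constraint from $S$ being normal does yield a depth bound at the base locus: for any prime $\p \supseteq (h_1,\dots,h_r)$ one has $\depth R_\p = \depth S_{\bar\p} + r \geq \min(2+r, \dim R_\p)$, which gives $R$ enough depth along $Z$ to make $R/(g)$ satisfy $(S_2)$ at such $\p$, and (when $r=2$) forces $R_\eta$ to be regular at the generic point $\eta$ of $Z$, so $(R/g)_\eta$ is a DVR for generic $g$. But this depth estimate says nothing at primes not containing $(h_1,\dots,h_r)$; there you genuinely need a local Bertini theorem for normality (Flenner) applied away from the base locus $V(h_1,\dots,h_r)$. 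The $(S_2)$ part of that input is not a formal consequence of $R$ being $(S_2)$ --- cutting an $(S_2)$ ring by a nonzerodivisor only gives $(S_1)$ in general --- so one must actually invoke the Bertini statement for the property ``normal'' (or for $(S_2)$ together with $(R_1)$), not merely for regularity. In other words, the hypothesis on $S$ alone does not ``force'' normality of the generic section; it supplies the missing information at $Z$, while Flenner's theorem supplies it away from $Z$. With that division of labor made explicit, the argument closes up and is, as far as I can tell, what the authors intended by ``repeated applications''.
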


\begin{proof}
It follows from repeated applications of Theorem \ref{vklt cut} (2).
\end{proof}

\begin{cor}\label{vklt local}
With notation as in Setting \ref{local setting}, we assume that $X$ and $\mathcal{X}$ are normal integral schemes.
let $x \in X$ be a closed point and $\mathcal{Z} \subseteq X$ be an irreducible closed subscheme such that $(\mathcal{X}, i, \mathcal{Z}, j)$ is a deformation of the pair $(X, \{x\}_{\mathrm{red}})$ over $T$ with reference point $t$. 
Let $y$ be the generic point of $\mathcal{Z}$, which lies in the generic fiber $\mathcal{X}_\eta$. If $(X, \mathcal{D}|_X, (\ba \sO_X)^\lambda)$ is valuatively klt at $x$, then so is $(\mathcal{X}_\eta, \mathcal{D}_\eta, (\ba \sO_{\mathcal{X}_\eta})^\lambda)$ at $y$.
\end{cor}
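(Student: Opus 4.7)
The plan is to reduce to Corollary \ref{vklt cut2} by cutting $\mathcal{X}$ down to $X$ with a regular sequence pulled back from a regular system of parameters on $T$ at $t$, and then to localize at $y$.

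First I would record the setup. Because $\mathcal{Z}$ is flat over $T$ with fiber the reduced point $\{x\}$ over $t$ and $T$ is irreducible, the generic point $y$ of $\mathcal{Z}$ maps to the generic point $\eta$ of $T$; hence $y \in \mathcal{X}_\eta$ and
\[
\sO_{\mathcal{X}_\eta,y} \;=\; \sO_{\mathcal{X},y} \otimes_{\sO_{T,\eta}} \kappa(\eta) \;=\; \sO_{\mathcal{X},y} \;=\; R_{\p_y},
\]
where $R := \sO_{\mathcal{X},x}$ and $\p_y \subseteq R$ is the prime of $R$ corresponding to $y$. Next, let $r := \dim \sO_{T,t}$ and pick a regular system of parameters $h_1,\dots,h_r$ of $\sO_{T,t}$. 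By flatness of $\mathcal{X} \to T$, the pullbacks (still denoted $h_1,\dots,h_r$) form a regular sequence in $R$ with quotient $S := R/(h_1,\dots,h_r) \cong \sO_{X,x}$, and $S$ is normal because $X$ is.

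Next, I would apply Corollary \ref{vklt cut2} to the triple $(R, \mathcal{D}_x, (\ba R)^{\lambda})$, where $\mathcal{D}_x$ denotes the flat pullback of $\mathcal{D}$ along $\Spec R \to \mathcal{X}$. The missing hypotheses are immediate from Setting \ref{local setting}: the support of $\mathcal{D}$ contains no generic point of $X$, so $\Spec S \not\subseteq \Supp \mathcal{D}_x$; and the nonvanishing of $\ba \sO_X$ gives $\ba R \not\subseteq (h_1,\dots,h_r)$. Flat base change also identifies $\mathcal{D}_x|_{\Spec S}$ with $(\mathcal{D}|_X)_x$ and $\ba S$ with $(\ba \sO_X)_x$, so the hypothesis that $(X, \mathcal{D}|_X, (\ba \sO_X)^{\lambda})$ is valuatively klt at $x$ provides exactly the input to Corollary \ref{vklt cut2}. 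Its conclusion is that $(\Spec R, \mathcal{D}_x, (\ba R)^{\lambda})$ is valuatively klt.

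To finish, I would invoke Proposition \ref{singularity at a point} (2) with $D = 0$: since $(\Spec R, \mathcal{D}_x, (\ba R)^{\lambda})$ is valuatively klt and $\p_y$ is a point of $\Spec R$, the localized triple $(\Spec R_{\p_y}, (\mathcal{D}_x)_{\p_y}, (\ba R_{\p_y})^{\lambda})$ is valuatively klt. Under the identification $\sO_{\mathcal{X}_\eta,y} = R_{\p_y}$, this is precisely the valuative klt-ness of $(\mathcal{X}_\eta, \mathcal{D}_\eta, (\ba \sO_{\mathcal{X}_\eta})^{\lambda})$ at $y$. The argument is essentially formal given Corollary \ref{vklt cut2}; the only delicate point is keeping track of the identifications of the divisorial and ideal data under flat pullback, quotient, and localization.
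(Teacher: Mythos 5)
Your proposal is correct and follows the same strategy as the paper's own proof: cut the total space $\mathcal{X}$ down to the closed fiber $X$ at $x$ by the regular sequence coming from a regular system of parameters of $\sO_{T,t}$, invoke Corollary \ref{vklt cut2} to conclude $(\mathcal{X},\mathcal{D},\ba^\lambda)$ is valuatively klt at $x$, and then localize at the generalization $y$ using Proposition \ref{singularity at a point}. The paper's version is more terse (it simply notes that $X$ is locally a complete intersection in $\mathcal{X}$ and that $y$ generalizes $x$), while you spell out the flat base-change identifications of $S\cong\sO_{X,x}$, of $\mathcal{D}_x|_{\Spec S}$ with $(\mathcal{D}|_X)_x$, and of $\sO_{\mathcal{X}_\eta,y}$ with $\sO_{\mathcal{X},y}$, but there is no substantive difference in the argument.
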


\begin{proof}
Let $f : \widetilde{T} \to T_{\mathrm{red}}$ be a resolution of singularities of the reduced closed subscheme $T_{\mathrm{red}}$ of $T$.
Take a closed point $\widetilde{t} \in \widetilde{T}$ that maps to the point $t \in T$.
Since the closed fiber of $\mathcal{X} \times_T \widetilde{T}$ over $\widetilde{t}$ is isomorphic to $\mathcal{X}_t =X$ and the generic fiber of $\mathcal{X} \times_T \widetilde{T}$ is isomorphic to $\mathcal{X}_{\eta}$, after replacing $T$ by $S$, we may assume that $T$ is a regular integral scheme.
Then the closed fiber $X$ is locally a complete intersection in $\mathcal{X}$, and we see from Corollary \ref{vklt cut2} that $(\mathcal{X}, \mathcal{D}, \ba^\lambda)$ is valuatively klt at $x$. 
Since $y$ is a generalization of $x$, the triple $(\mathcal{X}, \mathcal{D}, \ba^\lambda)$ is valuatively klt at $y$ by Lemma \ref{singularity at a point}, which completes the proof. 
\end{proof}

\begin{cor}\label{vklt proper}
With notation as in Setting \ref{local setting}, we assume that $X$ and $\mathcal{X}$ are normal integral schemes.
We further assume that $\mathcal{X}$ is proper over $T$.
If $(X, \mathcal{D}|_X, (\ba \sO_X)^\lambda)$ is valuatively klt, then so is $(\mathcal{X}_\eta, \mathcal{D}_\eta, (\ba \sO_{\mathcal{X}_\eta})^\lambda)$.
\end{cor}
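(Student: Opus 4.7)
The plan is to reduce to Corollary \ref{vklt cut2} pointwise on the closed fiber and then use properness of $\mathcal{X}/T$ to propagate valuative klt-ness to the generic fiber.

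First I would fix a closed point $x \in X$, set $R = \sO_{\mathcal{X}, x}$, and note that, since $\mathcal{X} \to T$ is flat and $T$ is regular of dimension $r$ at $t$, any regular system of parameters of $\sO_{T, t}$ pulls back to a regular sequence $h_1, \ldots, h_r$ in $R$ with $R/(h_1, \ldots, h_r) \cong \sO_{X, x}$. The hypothesis that $(X, \mathcal{D}|_X, (\ba \sO_X)^\lambda)$ is valuatively klt gives valuative klt-ness of the induced local triple on $\Spec \sO_{X, x}$, so Corollary \ref{vklt cut2} applied to the localization at $x$ along $h_1, \ldots, h_r$ yields that $(\mathcal{X}, \mathcal{D}, \ba^\lambda)$ is valuatively klt at $x$.

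Next, by Proposition \ref{singularity at a point}, each such $x$ admits an open neighborhood $U_x \subseteq \mathcal{X}$ on which $(\mathcal{X}, \mathcal{D}, \ba^\lambda)$ is valuatively klt. The union $U = \bigcup_x U_x$, ranging over all closed points of $X$, is open and contains every closed point of $X$. Since $X$ is of finite type over the algebraically closed field $k$, and hence has no nonempty closed subset without closed points, this forces $X \subseteq U$.

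Finally I would show $\mathcal{X}_\eta \subseteq U$. For any $y \in \mathcal{X}_\eta$, the reduced closure $\overline{\{y\}}$ in $\mathcal{X}$ is proper over $T$ and dominates $T$, hence surjects onto $T$; in particular $\overline{\{y\}} \cap X$ is a nonempty closed subset of $X$, which contains some closed point $x$. Since $U$ is open and $y$ specializes to $x \in U$, we get $y \in U$. As the natural map $\sO_{\mathcal{X}, y} \to \sO_{\mathcal{X}_\eta, y}$ is an isomorphism for $y \in \mathcal{X}_\eta$, and $\mathcal{D}$, $\ba$ restrict compatibly, valuative klt-ness of $(\mathcal{X}, \mathcal{D}, \ba^\lambda)$ at each $y \in \mathcal{X}_\eta$ is the same as that of $(\mathcal{X}_\eta, \mathcal{D}_\eta, (\ba \sO_{\mathcal{X}_\eta})^\lambda)$ at $y$, completing the proof. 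I do not anticipate a serious obstacle; the main delicate point is Step 1's invocation of Corollary \ref{vklt cut2}, which depends on the regularity of the embedding $X \hookrightarrow \mathcal{X}$ at closed points, and this is automatic from flatness over a regular base.
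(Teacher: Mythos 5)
Your proposal is correct and follows essentially the same route as the paper: apply Corollary~\ref{vklt cut2} at each closed point of the special fiber (using that the fiber is cut out by a regular sequence thanks to flatness over the regular base $T$), then use properness of $\mathcal{X}\to T$ together with Proposition~\ref{singularity at a point} to propagate valuative klt-ness to $\mathcal{X}_\eta$. The paper phrases the propagation step as ``the structure map is closed, so the closed non-klt locus cannot meet $\mathcal{X}_\eta$,'' while you argue pointwise via the closure of $\{y\}$; these are the same use of properness, and the rest of your reasoning (regular sequence from a regular system of parameters, openness of the klt locus, stability of open sets under generalization, and the identification $\sO_{\mathcal{X},y}\cong\sO_{\mathcal{X}_\eta,y}$) is accurate.
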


\begin{proof}
Since the structure map $\mathcal{X} \to T$ is a closed map, it follows from an argument similar to the proof of Corollary \ref{vklt local} that $(\mathcal{X}, \mathcal{D}, \ba^\lambda)$ is valuatively klt near $\mathcal{X}_{\eta}$.   
\end{proof}

Finally, we show that valuatively klt singularities are invariant under deformations.  
Corollary \ref{general klt fibers} (2) gives an alternative proof of a result of Esnault-Viehweg \cite{EV}. 

\begin{cor}\label{general klt fibers}
Let $T$ be an irreducible algebraic scheme over an algebraically closed field $k$ of characteristic zero and $(\mathcal{X}, \mathcal{D}, \ba^{\lambda}) \to T$ be a proper flat family of triples over $T$, where $\mathcal{D}$ is an effective $\Q$-Weil divisor on a normal variety $\mathcal{X}$ over $k$, $\ba \subseteq \sO_X$ is a nonzero coherent ideal sheaf and $\lambda > 0$ is a real number. 
\begin{enumerate}[label=\textup{(\arabic*)}]
\item
If some closed fiber $(\mathcal{X}_{t_0}, \mathcal{D}_{t_0}, (\ba \sO_{\mathcal{X}_{t_0}})^{\lambda})$ is valuatively klt, then so is a general closed fiber $(\mathcal{X}_{t}, \mathcal{D}_{t}, (\ba \sO_{\mathcal{X}_{t}})^{\lambda})$.
\item 
If some closed fiber $(\mathcal{X}_{t_0}, \mathcal{D}_{t_0}, (\ba \sO_{\mathcal{X}_{t_0}})^{\lambda})$ is two-dimensional klt, then so is a general closed fiber $(\mathcal{X}_{t}, \mathcal{D}_{t}, (\ba \sO_{\mathcal{X}_{t}})^{\lambda})$. 
\end{enumerate}
\end{cor}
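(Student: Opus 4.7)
The plan is to reduce both assertions to an application of Corollary \ref{vklt proper}, followed by a standard spreading argument from the generic fiber to a general closed fiber. Since both assertions concern only a nonempty open subset of $T$, we may freely shrink $T$ to an affine open neighborhood of $t_0$.

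For (1), the hypothesis that $(\mathcal{X}_{t_0}, \mathcal{D}_{t_0}, (\ba\sO_{\mathcal{X}_{t_0}})^{\lambda})$ is valuatively klt forces $\mathcal{X}_{t_0}$ to be normal integral. Together with the normality of $\mathcal{X}$ and the properness of $\mathcal{X} \to T$, this puts us in the setting of Corollary \ref{vklt proper}, whose conclusion is that the generic fiber $(\mathcal{X}_{\eta}, \mathcal{D}_{\eta}, (\ba\sO_{\mathcal{X}_{\eta}})^{\lambda})$ over $\kappa(\eta)$ is valuatively klt. To conclude we must spread this property to a general closed fiber. Using Proposition \ref{weak sense vs strong sense}, choose an ideal $J_{\eta} \subseteq \sO_{\mathcal{X}_{\eta}}$ whose zero locus contains the non-$\Q$-Gorenstein locus of $\mathcal{X}_{\eta}$ and a real number $\epsilon>0$ such that $(\mathcal{X}_{\eta}, \mathcal{D}_{\eta}, (\ba\sO_{\mathcal{X}_{\eta}})^{\lambda} J_{\eta}^{\epsilon})$ is $m$-weakly valuatively klt for every $m\ge 1$ with $m\mathcal{D}_{\eta}$ an integral Weil divisor, and fix a log resolution $f_{\eta}: Y_{\eta}\to \mathcal{X}_{\eta}$ certifying these inequalities in the sense of Lemma \ref{single resolution}. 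All of this data is defined over $\kappa(\eta) = k(T)$ and hence spreads to some nonempty open $V\subseteq T$: the ideals $J$, $\ba$ and the reflexive powers $\sO_{\mathcal{X}}(-m(K_{\mathcal{X}}+\mathcal{D}))$ are coherent, and the resolution extends to a proper birational $f:Y\to \mathcal{X}_V$. After shrinking $V$ further, the restriction of these data to the closed fiber over any $t\in V$ yields a log resolution with the same numerical invariants, and a final application of Proposition \ref{weak sense vs strong sense} on $\mathcal{X}_t$ gives that $(\mathcal{X}_t, \mathcal{D}_t, (\ba\sO_{\mathcal{X}_t})^{\lambda})$ is valuatively klt.

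For (2), a classical klt triple is in particular valuatively klt by Remark \ref{remark on val lc}, so part (1) shows that a general closed fiber $(\mathcal{X}_t, \mathcal{D}_t, (\ba\sO_{\mathcal{X}_t})^{\lambda})$ is valuatively klt. Since $\mathcal{X}\to T$ is flat and $\mathcal{X}_{t_0}$ is $2$-dimensional, every closed fiber has dimension $2$. Valuatively klt implies valuatively lc (Remark \ref{valuative implication}), so Lemma \ref{2-dim val lc} gives that $K_{\mathcal{X}_t}+\mathcal{D}_t$ is $\Q$-Cartier; and in the $\Q$-Cartier regime valuatively klt coincides with klt by Remark \ref{remark on val lc}. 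Hence $(\mathcal{X}_t, \mathcal{D}_t, (\ba\sO_{\mathcal{X}_t})^{\lambda})$ is klt.

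The main obstacle I expect is the spreading step in (1). Although Corollary \ref{vklt proper} hands us the valuative klt condition on $\mathcal{X}_{\eta}$, transferring the finite set of numerical inequalities certifying this condition to a general closed fiber requires that the natural pullback numbers $\ord^{\natural}_{E_t}(-m(K_{\mathcal{X}_t}+\mathcal{D}_t))/m$ agree with their generic-fiber analogues for every $m$. This ultimately depends on the reflexive sheaves $\sO_{\mathcal{X}}(-m(K_{\mathcal{X}}+\mathcal{D}))$ restricting correctly to the fibers after shrinking $T$, and on the restriction of a single log resolution remaining a log resolution fiberwise, which is where flatness of the family and smoothness of the base are essential.
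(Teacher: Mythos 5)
Your approach is essentially the same as the paper's: both proofs first invoke Corollary \ref{vklt proper} to conclude that the generic fiber $(\mathcal{X}_\eta, \mathcal{D}_\eta, (\ba\sO_{\mathcal{X}_\eta})^\lambda)$ is valuatively klt, and then spread out to a general closed fiber. The paper's version of the spreading step is a one-liner that cites Proposition \ref{singularity at a point}, which on its own only gives the point-to-neighborhood equivalence for the total space; the passage from generic fiber to general closed fiber does require the kind of constructibility/spreading-out argument you sketch. You correctly flag that this is the real work, and your more explicit description via Proposition \ref{weak sense vs strong sense} and a spread-out log resolution is a useful elaboration, though you leave the ``infinitely many $m$'' concern open. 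That concern is in fact resolvable: since $a^+_{m,E}$ decreases to $a^+_E$, once one knows $a^+_{E}$ satisfies the required strict bound on the generic fiber, the $m$-weak discrepancies $a^+_{m,E} \ge a^+_E$ satisfy it for every $m$ simultaneously, and the semicontinuity of $\ord^{\natural}_E$ across fibers together with a single spread-out resolution handles a general closed fiber without needing a separate shrinking for each $m$. For part (2), you reverse the order of operations relative to the paper: the paper applies Lemma \ref{2-dim val lc} (and Remark \ref{remark on val lc}) on the generic fiber to obtain genuine klt-ness there and then spreads out klt-ness, which is cleaner because in the $\Q$-Gorenstein case the spreading is a standard single-resolution argument with no ``all $m$'' subtlety; you instead spread out valuative klt-ness first and then convert on the closed fiber, which is also valid but inherits the harder spreading step from (1). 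Both routes yield the stated conclusion.
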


\begin{proof}
(1) 
It follows from Corollary \ref{vklt proper} that the generic fiber $(\mathcal{X}_{\eta}, \mathcal{D}_{\eta}, (\ba \sO_{X_\eta})^{\lambda})$ is valuatively klt.
Then by Lemma \ref{single resolution},
a general closed fiber $(\mathcal{X}_{t}, \mathcal{D}_{t}, (\ba \sO_{\mathcal{X}_{t}})^{\lambda})$ is also valuatively klt. 

(2) 
Corollary \ref{vklt proper} and Lemma \ref{2-dim val lc} tell us that the generic fiber $(\mathcal{X}_{\eta}, \mathcal{D}_{\eta}, (\ba \sO_{X_\eta})^{\lambda})$ is klt. 
Then a general closed fiber $(\mathcal{X}_{t}, \mathcal{D}_{t}, (\ba \sO_{\mathcal{X}_{t}})^{\lambda})$ is also klt. 
\end{proof}

\section{Deformations of  slc singularities}

In this section, we study small deformations of slc singularities. 

\subsection{Variants of non-lc ideal sheaves}
Fujino's non-lc ideal sheaves are a generalization of multiplier ideal sheaves that defines non-lc locus (see \cite{Fuj} and \cite{FST}). 
We introduce two new variants of these ideal sheaves to generalize the inversion of adjunction for slc singularities. 
This subsection is devoted to their definitions and basic properties. 

Throughout this subsection, we assume that $\Gamma $ is an $\R$-Weil divisor, $W$ is a reduced Weil divisor and $D$ is a Weil divisor on a normal integral scheme $X$.
\begin{defn}
\ 
\begin{enumerate}[label=(\roman*)]
\item The $\Q$-Weil divisor $\Theta^W(\Gamma)$  on $X$ is defined as
\[
\Theta^W(\Gamma) : =  \Gamma -\sum_E E,
\]
where $E$ runs through all irreducible components of $W$ such that $\ord_E(\Gamma)$ is an integer.
\item The $\Q$-Weil divisor $\Theta^W_D(\Gamma)$ on $X$ is defined as
\[
\Theta^W_D(\Gamma) : =  \Gamma -\sum_E E,
\]
where $E$ runs through all irreducible components of $W$ such that $\ord_E(\Gamma) = \ord_E(D) +1$.
\end{enumerate}
\end{defn}

We collect some basic properties of $\Theta^W(\Gamma)$ and $\Theta^W_D(\Gamma)$ in the following lemma.  
\begin{lem}\label{Theta basic}
\ 
\begin{enumerate}[label=$(\arabic*)$]
\item $\Theta^W(\Gamma) \le \Theta^W_D(\Gamma)$.
\item For a Weil divisor $A$ on $X$, we have 
\[
\Theta^W(\Gamma+A) = \Theta^W(\Gamma) +A \; \; \textup{and} \; \; \Theta^W_{D+A}(\Gamma+A) = \Theta^W_D(\Gamma)+A.
\]
\item For a reduced Weil divisor $W'$ on $X$ such that $W \le W'$, we have 
\[
\Theta^W(\Gamma) \ge \Theta^{W'}(\Gamma)  \; \; \textup{and}  \; \; \Theta^W_D(\Gamma) \ge \Theta^{W'}_D(\Gamma).
\]
\item For an $\R$-Weil divisor $\Gamma'$ on $X$ such that $\Gamma \le \Gamma'$, we have 
\[
\lfloor \Theta^W(\Gamma) \rfloor \le \lfloor \Theta^W(\Gamma') \rfloor \; \; \textup{and} \; \; \lfloor \Theta^W_D(\Gamma) \rfloor \le \lfloor \Theta^W_D(\Gamma') \rfloor.
\]
\item For an open subscheme $U \subseteq X$, we have
\[
\Theta^W(\Gamma)|_U = \Theta^{W|_U}(\Gamma|_U) \; \; \textup{and} \; \; \Theta^W_D(\Gamma)|_U = \Theta^{W|_U}_{D|_U}(\Gamma|_U).
\]
\item For a reduced Weil divisor $W''$ having no common components with $W$, we have
\[
\Theta^{W + W''}(\Gamma) = \Theta^W(\Theta^{W''}(\Gamma)) \; \; \textup{and} \; \; \Theta^{W + W''}_D(\Gamma) = \Theta^W_D(\Theta^{W''}_D(\Gamma)).
\]
\end{enumerate}
\end{lem}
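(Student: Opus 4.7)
Every assertion in the lemma reduces to a coefficient-by-coefficient comparison at each prime divisor $F$ of $X$, with a split according to whether $F$ is an irreducible component of the reduced divisor appearing in the superscript. Since $\Theta^W(\Gamma)$ and $\Theta^W_D(\Gamma)$ differ only in the rule specifying which components of $W$ are subtracted, each identity amounts to comparing two selection rules on the components of $W$.

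\textbf{Plan for (1), (2), (3), (5), (6).} For (1), I would use that $\ord_E(D)\in\Z$ for any Weil divisor $D$, so the condition $\ord_E(\Gamma)=\ord_E(D)+1$ already forces $\ord_E(\Gamma)\in\Z$; the set of components subtracted in $\Theta^W_D(\Gamma)$ is therefore contained in the one subtracted in $\Theta^W(\Gamma)$, which yields $\Theta^W(\Gamma)\le\Theta^W_D(\Gamma)$. For (2), the function $\ord_E(\cdot)$ is additive and $A$ has integer coefficients along each prime divisor, so $\ord_E(\Gamma+A)\in\Z\Leftrightarrow\ord_E(\Gamma)\in\Z$ and $\ord_E(\Gamma+A)=\ord_E(D+A)+1\Leftrightarrow\ord_E(\Gamma)=\ord_E(D)+1$; the two selection rules pick out exactly the same set of components, and the identities follow. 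For (3), passing from $W$ to $W'\ge W$ only enlarges the pool of candidates, so possibly more components are subtracted, giving $\Theta^{W'}\le\Theta^W$ in both variants. (5) is immediate because restriction to an open subscheme preserves both coefficients and components. For (6), the hypothesis that $W$ and $W''$ share no components ensures that $\Theta^{W''}$ (respectively $\Theta^{W''}_D$) leaves the coefficient along each component of $W$ unchanged; the outer operator then removes exactly those components of $W$ that would be removed directly by $\Theta^{W+W''}$ (respectively $\Theta^{W+W''}_D$), and the components of $W''$ are handled by the inner operator.

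\textbf{Main obstacle: (4).} The floor function is not additive and the subtraction rule defining $\Theta^W$ depends discontinuously on $\Gamma$, so monotonicity of $\lfloor \Theta^W(\cdot) \rfloor$ is not quite automatic. Fix a prime divisor $F$. If $F$ is not a component of $W$, both coefficients coincide with $\ord_F(\Gamma)$ and $\ord_F(\Gamma')$, and the inequality is just monotonicity of the floor. If $F$ is a component of $W$, set $a=\ord_F(\Gamma)$ and $a'=\ord_F(\Gamma')$ and split into four cases according to the integrality of $a$ and $a'$. The cases in which both are integer or both are non-integer are immediate. The two mixed cases carry the real content: when $a\in\Z$ and $a'\notin\Z$, the needed inequality $a-1\le\lfloor a'\rfloor$ follows from $a\le a'$ together with $a\in\Z$, which gives $a\le\lfloor a'\rfloor$; when $a\notin\Z$ and $a'\in\Z$, the inequality $a\le a'$ is forced to be strict, so $\lfloor a\rfloor$ is an integer strictly below the integer $a'$, hence $\le a'-1$. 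The inequality for $\Theta^W_D$ is handled by the same case analysis after replacing the test ``$\ord_F(\Gamma)\in\Z$'' by ``$\ord_F(\Gamma)=\ord_F(D)+1$'', using that $\ord_F(D)$ is an integer.
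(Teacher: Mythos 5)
The paper dismisses this lemma with ``The proof is straightforward,'' so there is no argument to compare against; your coefficient-by-coefficient verification is the expected one and it is correct. Items (1)--(3), (5), (6) are indeed bookkeeping once one notes that $\ord_E(D)\in\Z$, that the two selection rules are monotone in $W$, and that $\Theta^{W''}$ does not alter coefficients along components of $W$ when $W$, $W''$ are disjoint. Your case analysis for (4) is the one place requiring care, and you handle the two mixed cases correctly: when $\ord_F(\Gamma)\in\Z$ and $\ord_F(\Gamma')\notin\Z$ the integrality of $\ord_F(\Gamma)$ gives $\ord_F(\Gamma)\le\lfloor\ord_F(\Gamma')\rfloor$, and when $\ord_F(\Gamma)\notin\Z$ and $\ord_F(\Gamma')\in\Z$ the inequality is forced strict so $\lfloor\ord_F(\Gamma)\rfloor\le\ord_F(\Gamma')-1$; the $\Theta^W_D$ variant follows by the same split with the test ``$=\ord_F(D)+1$'' in place of ``$\in\Z$,'' exactly as you say. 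No gaps.
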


\begin{proof}
The proof is straightforward.
\end{proof}

\begin{lem}\label{Theta birat}
Suppose that $X$ is a regular integral excellent scheme with dualizing complex and the union of the supports of $\Gamma$, $D$ and $W$ is a simple normal crossing divisor, which is denoted by $B$. 
Let $f: Y \to X$ be a log resolution of $(X, B)$ with exceptional locus $\mathrm{Exc}(f)=\bigcup_i E_i$, and set $\Gamma_Y : = f^*(K_X+\Gamma)-K_Y$ and $W_Y : = f^{-1}_*W + \sum_i E_i$. 
Then 
\begin{align*}
f_*\sO_Y(- \lfloor \Theta^{W_Y}(\Gamma_Y) \rfloor)&=\sO_X(- \lfloor \Theta^{W}(\Gamma) \rfloor),\\
f_*\sO_Y(- \lfloor \Theta^{W_Y}_{f^* D}(\Gamma_Y) \rfloor)&=\sO_X(- \lfloor \Theta^{W}_D(\Gamma) \rfloor). 
\end{align*}
\end{lem}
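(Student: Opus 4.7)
My plan is to reduce each identity to showing that an explicit auxiliary divisor on $Y$ is effective and $f$-exceptional, after which the projection formula will finish the argument. Since $X$ is regular, every Weil divisor on $X$ is Cartier, so $\lfloor \Theta^W(\Gamma)\rfloor$ and $\lfloor \Theta^W_D(\Gamma)\rfloor$ pull back to genuine divisors on $Y$ and the associated sheaves are invertible.

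First I would record the elementary identities $\lfloor \Theta^W(\Gamma)\rfloor = \lfloor \Gamma\rfloor - B$ and $\lfloor \Theta^W_D(\Gamma)\rfloor = \lfloor \Gamma\rfloor - B'$, where $B \le W$ (resp.\ $B' \le W$) is the reduced divisor whose components are those prime $F_j \subset W$ with $\ord_{F_j}(\Gamma) \in \Z$ (resp.\ with $\ord_{F_j}(\Gamma) = \ord_{F_j}(D)+1$). A crucial point is that every component of $B$ or $B'$ has integer coefficient in $\Gamma$, so $B$ and $\{\Gamma\}$, as well as $B'$ and $\{\Gamma\}$, have disjoint supports. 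Consequently $\{\Gamma\} + B$ and $\{\Gamma\} + B'$ are effective divisors with simple normal crossing support and coefficients in $[0,1]$, so $(X, \{\Gamma\}+B)$ and $(X, \{\Gamma\}+B')$ are log canonical simple normal crossing pairs on the regular scheme $X$.

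The main step is to verify that
\[
F := f^*\lfloor \Theta^W(\Gamma)\rfloor - \lfloor \Theta^{W_Y}(\Gamma_Y)\rfloor
\]
is an effective $f$-exceptional divisor (and likewise for the $D$-variant). Exceptionality is immediate because the coefficients on each strict transform $f^{-1}_*F_j$ coincide on the two sides. For effectivity at an exceptional prime $E_i$, a direct computation based on $\Gamma_Y = f^*\Gamma - (K_Y - f^*K_X)$ gives
\[
\ord_{E_i}(F) = a_i + 1 - \lceil \ord_{E_i}(f^*\{\Gamma\}) \rceil - \ord_{E_i}(f^*B),
\]
where $a_i := \ord_{E_i}(K_Y - f^*K_X) \in \Z_{\ge 0}$. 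This quantity is an integer. Log canonicity of $(X, \{\Gamma\}+B)$ gives the bound $\ord_{E_i}(f^*(\{\Gamma\}+B)) \le a_i + 1$, and since $\lceil x\rceil < x+1$ for every real $x$, these two estimates combine to force $\ord_{E_i}(F) > -1$, hence $\ord_{E_i}(F) \ge 0$.

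Once $F$ is known to be effective and $f$-exceptional, the projection formula yields $\sO_Y(-\lfloor \Theta^{W_Y}(\Gamma_Y)\rfloor) \cong f^*\sO_X(-\lfloor \Theta^W(\Gamma)\rfloor) \otimes \sO_Y(F)$, and pushing forward gives the first identity via the standard fact that $f_*\sO_Y(F) = \sO_X$ for effective $f$-exceptional divisors, which uses only the normality of $X$. The $D$-variant is proved in exactly the same way with $B$ replaced by $B'$; the fact that components of $B'$ still have integer coefficient in $\Gamma$ keeps $\{\Gamma\}+B'$ inside the simple normal crossing log canonical regime, so the same exceptional estimate applies. I expect the main technical obstacle to be the bookkeeping in the exceptional coefficient computation: one must carefully track three separate floor/ceiling interactions, coming from $\lfloor \Gamma_Y\rfloor$, from the definition of $\Theta$, and from $f^*\lfloor \Theta^W(\Gamma)\rfloor$, and confirm that they combine precisely to produce the ceiling together with the extra $+1$ that makes the log canonical bound exploitable.
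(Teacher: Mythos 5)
Your argument for the first identity is correct, and the formula
\[
\ord_{E_i}(F) = a_i + 1 - \lceil \ord_{E_i}(f^*\{\Gamma\}) \rceil - \ord_{E_i}(f^*B)
\]
does hold there, because for an exceptional prime $E_i$ one has $E_i \subseteq W_Y$, and $E_i$ gets subtracted in $\Theta^{W_Y}(\Gamma_Y)$ precisely when $\ord_{E_i}(\Gamma_Y)\in\Z$, which (since $a_i\in\Z$) is equivalent to $\ord_{E_i}(f^*\{\Gamma\})\in\Z$. This is exactly the indicator that converts $-\lfloor\cdot\rfloor$ into $1-\lceil\cdot\rceil$.

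However, the claim that ``the $D$-variant is proved in exactly the same way'' is where the gap lies. Repeating the computation for $F' := f^*\lfloor\Theta^W_D(\Gamma)\rfloor - \lfloor\Theta^{W_Y}_{f^*D}(\Gamma_Y)\rfloor$ gives
\[
\ord_{E_i}(F') = a_i - \lfloor \ord_{E_i}(f^*\{\Gamma\}) \rfloor - \ord_{E_i}(f^*B') + \delta, \qquad \delta := \bigl[\ord_{E_i}(\Gamma_Y) = \ord_{E_i}(f^*D)+1\bigr],
\]
and $\delta$ is \emph{not} the same indicator as $[\ord_{E_i}(f^*\{\Gamma\})\in\Z]$: the implication $\delta=1\Rightarrow\ord_{E_i}(f^*\{\Gamma\})\in\Z$ holds, but not the converse. (For instance on $\mathbb{A}^2$ with $\Gamma=0$, $W=F_1+F_2$, $D=0$, and $E$ the exceptional of blowing up the origin, one has $\ord_E(f^*\{\Gamma\})=0\in\Z$, but $\ord_E(\Gamma_Y)=-1\ne 1=\ord_E(f^*D)+1$, so your formula predicts $\ord_E(F')=2$ while the true value is $1$.) In the problematic regime $\ord_{E_i}(f^*\{\Gamma\})\in\Z$ and $\delta=0$, your formula overestimates $\ord_{E_i}(F')$ by $1$, and the log canonical bound together with $\lceil x\rceil<x+1$ only yields $\ord_{E_i}(F')\ge -1$, which is not enough. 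Concretely, the bad case is when $a_{E_i}(X,\{\Gamma\}+B')=-1$ and $\ord_{E_i}(f^*\{\Gamma\})\in\Z$ simultaneously; then the naive estimate is exactly $-1$ and you must show $\delta=1$.

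To close the gap one needs the structure of lc places of the snc pair $(X,\{\Gamma\}+B')$: if $a_{E_i}(X,\{\Gamma\}+B')=-1$, then the center of $E_i$ on $X$ is an lc center, i.e.\ a stratum of $B'$, and hence $\ord_{E_i}(f^*F_j)=0$ for every component $F_j$ of the snc divisor not contained in $B'$. Since the Weil divisor $M:=\lfloor\Gamma\rfloor - B' - D$ is supported on such $F_j$ (on each component of $B'$ its coefficient vanishes by the defining condition $\ord_{F_j}(\Gamma)=\ord_{F_j}(D)+1$), this gives $\ord_{E_i}(f^*M)=0$, which after unwinding is precisely the condition $\delta=1$. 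With this extra step the effectivity of $F'$ follows, and the rest of your argument (exceptionality on strict transforms, projection formula, $f_*\sO_Y(F')=\sO_X$) goes through as written.
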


\begin{proof}
The proof is similar to that of \cite[Lemma 2.7]{Fuj}.
\end{proof}

We are now ready to define our variants of Fujino's non-lc ideal sheaves. 
\begin{defn}\label{new ideal}
Suppose that $X$ is a normal variety over a field $k$ of characteristic zero and $D$ is a Cartier divisor on $X$. 
Let $\Delta$ be an effective $\Q$-Weil divisor on $X$ such that $K_X+\Delta$ is $\Q$-Cartier, $\ba \subseteq \sO_X$ be a nonzero coherent ideal sheaf and $\lambda > 0$ be a real number.
Let $B$ 
be the union of the supports of $\Delta$, $W$ and $D$, and take a log resolution $f: Y \to X$ of $(X,B, \ba)$ with $\ba\sO_Y=\sO_Y(-F)$ and $\mathrm{Exc}(f)=\bigcup_i E_i$.  
The fractional ideal sheaves $\mathcal{J}^W(X, \Delta, \ba^\lambda)$ and $\mathcal{J}^W_D(X, \Delta, \ba^\lambda)$ are then defined as
\begin{align*}
\mathcal{J}^W(X, \Delta, \ba^\lambda) &: = f_* \sO_Y(-\lfloor \Theta^{W_Y}(\Delta_Y + \lambda F) \rfloor), \\
\mathcal{J}^W_D(X, \Delta, \ba^\lambda) &: = f_* \sO_Y(-\lfloor \Theta^{W_Y}_{f^*D}(\Delta_Y + \lambda F) \rfloor) ,
\end{align*}
where $W_Y: = f^{-1}_*W+ \sum_i E_i$ and $\Delta_Y : =f^*(K_X+\Delta)- K_Y$. 
This definition is independent of the choice of the log resolution $f$ by Lemma \ref{Theta birat}.

When $W$ is the union of the support of $D+\Delta$ and all the codimension one irreducible components of the closed subscheme of $X$ defined by $\ba$, the fractional ideal sheaf $\mathcal{J}^{W}_D(X, \Delta, \ba^\lambda)$ is denoted simply by $\mathcal{J}_D(X, \Delta, \ba^\lambda)$. 
\end{defn}

\begin{rem}
Definition \ref{new ideal} makes sense even if $X$ is disconnected. We also remark that given a nonzero coherent ideal sheaf $\bb \subseteq \sO_X$ and a real number $\lambda'>0$, the fractional ideal sheaves $\mathcal{J}^W(X, \Delta, \ba^\lambda \bb^{\lambda'})$ and  $\mathcal{J}^W_D(X, \Delta, \ba^\lambda \bb^{\lambda'})$ are defined similarly. 
\end{rem}

\begin{rem}
Let $(X,\Delta, \ba^\lambda)$ be as in Definition \ref{new ideal}, and assume in addition that $W$ is the union of the support of $\Delta$ and all the codimension one irreducible components of the closed subscheme of $X$ defined by $\ba$. 
\begin{enumerate}[label=(\roman*)]
\item  $\mathcal{J}^W(X, \Delta, \ba^\lambda)$ coincides with the maximal non-lc ideal sheaf $\mathcal{J}'(X, \Delta, \ba^\lambda)$ defined in \cite{FST}.
\item  $\mathcal{J}^W_0(X, \Delta, \ba^\lambda)$ coincides with the non-lc ideal sheaf $\mathcal{J}_{\mathrm{NLC}}(X,\Delta, \ba^\lambda)$  defined in \cite{Fuj}.
\end{enumerate}
\end{rem}

The following two lemmas state basic properties of $\mathcal{J}^W(X, \Delta, \ba^\lambda)$ and $\mathcal{J}^W_D(X, \Delta, \ba^\lambda)$ that we will use later. 
\begin{lem}\label{J basic}
Let $(X,\Delta, \ba^\lambda)$, $W$ and $D$ be as in Definition \ref{new ideal}.
\begin{enumerate}[label=$(\arabic*)$]
\item $\mathcal{J}^W_D(X, \Delta, \ba^\lambda) \subseteq \mathcal{J}^W(X, \Delta, \ba^\lambda)$.
\item  For a Cartier divisor $A$ on $X$, we have
\begin{align*}
\mathcal{J}^W(X, \Delta+A, \ba^\lambda) &= \mathcal{J}^W(X, \Delta, \ba^\lambda)\otimes_X \sO_X(-A),\\ 
\mathcal{J}^W_{D+A}(X, \Delta+A, \ba^\lambda) &= \mathcal{J}^W_D(X, \Delta, \ba^\lambda) \otimes_X \sO_X(-A).
\end{align*}
\item For a reduced Weil divisor $W'$ on $X$ such that $W \le W'$, we have
\begin{align*}
\mathcal{J}^W(X, \Delta, \ba^\lambda) &\subseteq \mathcal{J}^{W'}(X, \Delta, \ba^\lambda),\\ 
\mathcal{J}^W_D(X, \Delta, \ba^\lambda) &\subseteq \mathcal{J}^{W'}_D(X, \Delta, \ba^\lambda).
\end{align*}
\item Let $\Delta'$ be an effective $\Q$-Weil divisor on $X$ such that $K_X+\Delta'$ is $\Q$-Cartier and $\Delta \le \Delta'$, $\ba'$ be a nonzero coherent ideal sheaf such that $\ba \supseteq \ba'$ and $\lambda'$ be a real number such that $\lambda \le \lambda'$.
Then 
\begin{align*}
\mathcal{J}^W(X, \Delta, \ba^\lambda) &\supseteq \mathcal{J}^W(X', \Delta', {\ba'}^{\lambda'}),\\ 
\mathcal{J}^W_D(X, \Delta, \ba^\lambda) &\supseteq \mathcal{J}^W_D(X', \Delta', {\ba'}^{\lambda'}).
\end{align*}
\item For an open subscheme $U \subseteq X$, we have
\begin{align*}
\mathcal{J}^W(X, \Delta, \ba^\lambda)|_U &= \mathcal{J}^{W|_U}(U, \Delta|_U, \ba|_U^\lambda), \\ 
\mathcal{J}^W_D(X, \Delta, \ba^\lambda)|_U &= \mathcal{J}^{W|_U}_{D|_U}(U, \Delta|_U, \ba|_U^\lambda).
\end{align*}
\end{enumerate}
\end{lem}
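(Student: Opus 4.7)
The plan is to reduce each item to the corresponding item of Lemma \ref{Theta basic} by working on a single common log resolution that simultaneously handles the data on both sides of the asserted inclusion or identity. Since the fractional ideals in Definition \ref{new ideal} are independent of the log resolution by Lemma \ref{Theta birat}, we are free to enlarge the log resolution whenever needed. The general recipe is: pull back to $Y$, compare $\Theta$-divisors coefficient by coefficient via Lemma \ref{Theta basic}, take floors and pass to the associated line bundles $\sO_Y(-\lfloor \cdot \rfloor)$ (which reverses the direction of the inequality), and then apply $f_{*}$ (which preserves inclusions).

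Concretely, for (1) I will apply Lemma \ref{Theta basic}(1) with $\Gamma = \Delta_Y + \lambda F$ on any log resolution of $(X, B, \ba)$ as in Definition \ref{new ideal}. For (2), I will take a log resolution of $(X, B \cup \Supp A, \ba)$; since $A$ is Cartier, $(\Delta + A)_Y = \Delta_Y + f^{*}A$, and for the $D$-version also $f^{*}(D+A) = f^{*}D + f^{*}A$. Applying Lemma \ref{Theta basic}(2) displaces the relevant $\Theta$-divisor by the Cartier divisor $f^{*}A$, and the projection formula then gives the desired twist by $\sO_X(-A)$. For (3), I will take a log resolution of $(X, B \cup \Supp W', \ba)$; setting $W'_Y := f^{-1}_{*}W' + \sum_i E_i$, the hypothesis $W \le W'$ yields $W_Y \le W'_Y$, and Lemma \ref{Theta basic}(3) applies directly. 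For (5), I will base change the log resolution along the open immersion $U \hookrightarrow X$ and apply Lemma \ref{Theta basic}(5).

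The only step requiring an extra observation is (4). I will work on a common log resolution of $(X, B \cup \Supp \Delta', \ba \cdot \ba')$ with $\ba \sO_Y = \sO_Y(-F)$ and $\ba' \sO_Y = \sO_Y(-F')$. The inclusion $\ba \supseteq \ba'$ forces $F \le F'$, and combined with $\lambda \le \lambda'$ this gives $\lambda F \le \lambda' F'$. For the canonical parts, both $K_X + \Delta$ and $K_X + \Delta'$ are $\Q$-Cartier by hypothesis, so $\Delta' - \Delta$ is an effective $\Q$-Cartier $\Q$-divisor whose pullback $f^{*}(\Delta' - \Delta) = \Delta'_Y - \Delta_Y$ is effective. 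Adding the two inequalities yields $\Delta_Y + \lambda F \le \Delta'_Y + \lambda' F'$, and Lemma \ref{Theta basic}(4) concludes.

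I do not foresee any serious obstacle: every assertion of the lemma is a direct reflection of the corresponding item of Lemma \ref{Theta basic} on a suitable common log resolution, and the only minor verifications needed (the effectivity of $f^{*}(\Delta'-\Delta)$ in (4) and the behaviour of pullback of Cartier divisors in (2)) are standard.
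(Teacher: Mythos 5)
Your proposal is correct and takes essentially the same approach as the paper, which simply states that all assertions follow immediately from Lemma \ref{Theta basic}. You have filled in the implicit reduction to a common log resolution (justified by Lemma \ref{Theta birat}) and the coefficient-by-coefficient comparisons, including the small but worthwhile checks in (2) (Cartierness of $A$ ensures $(\Delta+A)_Y=\Delta_Y+f^*A$) and (4) ($\Delta'-\Delta$ is an effective $\Q$-Cartier divisor with effective pullback).
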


\begin{proof}
All the assertions immediately follow from Lemma \ref{Theta basic}.
\end{proof}

\begin{lem}\label{J vs lc}
Let $(X,\Delta, \ba^\lambda)$, $W$ and $D$ be as in Definition \ref{new ideal}.
Let $G$ 
be the cycle of codimension one in $X$ associated to the closed subscheme defined by $\ba$, that is, 
\[
G= \sum_{E} \ord_E(\ba) E,
\]
where $E$ runs through all prime divisors on $X$.
\begin{enumerate}[label=$(\arabic*)$]
\item We have inclusions 
\begin{align*}
\mathcal{J}^W(X, \Delta, \ba^\lambda) &\subseteq \sO_X(- \lfloor \Theta^W(\Delta + \lambda G) \rfloor ), \\
\mathcal{J}^W_D(X, \Delta, \ba^\lambda) &\subseteq \sO_X(- \lfloor \Theta^W_D(\Delta + \lambda G) \rfloor ).
\end{align*}
\item Assume that $W$ is contained in the support of $\Delta+G$.
Then following conditions are equivalent to each other:
\begin{enumerate}[label=\textup{(\alph*)}]
\item $\mathcal{J}^W(X,\Delta, \ba^\lambda) =\sO_X$,
\item $\mathcal{J}^W_0(X, \Delta, \ba^\lambda)=\sO_X$,
\item $(X,\Delta, \ba^\lambda)$ is lc and $\ord_E(\Delta+ \lambda G) < 1 $ for every prime divisor $E$ on $X$ that is not a component of $W$. 
\end{enumerate}
\end{enumerate}
\end{lem}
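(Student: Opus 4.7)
The strategy is to fix a log resolution $f\colon Y \to X$ of $(X, B, \ba)$ as in Definition~\ref{new ideal} and reduce both parts to a coefficient-by-coefficient analysis on $Y$. Writing $\Gamma_Y := \Delta_Y + \lambda F$, $W_Y := f^{-1}_*W + \sum_i E_i$, and $\Gamma := \Delta + \lambda G$, the prime divisors of $Y$ partition into three classes: exceptional divisors of $f$, strict transforms of components of $W$, and strict transforms of prime divisors of $X$ outside $W$; the first two lie in $W_Y$ and the third does not. This trichotomy drives both parts.

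For (1), I would verify the inclusions stalkwise as subsheaves of $\mathcal{K}_X$. Along the strict transform $E' := f^{-1}_*E$ of any prime divisor $E \subset X$ the morphism $f$ is a local isomorphism, so $\ord_{E'}(\Delta_Y) = \ord_E(\Delta)$, $\ord_{E'}(F) = \ord_E(G)$, and $\ord_{E'}(f^*D) = \ord_E(D)$; moreover $E' \in W_Y$ if and only if $E \in W$. Hence $\Theta^{W_Y}$ and $\Theta^{W_Y}_{f^*D}$ subtract exactly the same amount at $E'$ as $\Theta^W$ and $\Theta^W_D$ do at $E$, so the order constraint imposed on $\mathcal{J}^W$ (resp.~$\mathcal{J}^W_D$) along $E'$ coincides with the one imposed on the right-hand sheaf along $E$, yielding both asserted inclusions.

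For (2), I would analyze $\mathcal{J}^W = \sO_X$ and $\mathcal{J}^W_0 = \sO_X$ as two inclusions each. The inclusion $\sO_X \subseteq \mathcal{J}^W$ (resp.~$\sO_X \subseteq \mathcal{J}^W_0$) is equivalent to $\lfloor \Theta^{W_Y}(\Gamma_Y) \rfloor \le 0$ (resp.~$\lfloor \Theta^{W_Y}_0(\Gamma_Y) \rfloor \le 0$) on $Y$, and a direct case check by the trichotomy shows that these two conditions coincide: each amounts to $\ord_{E_Y}(\Gamma_Y) \le 1$ when $E_Y \in W_Y$ and $\ord_{E_Y}(\Gamma_Y) < 1$ when $E_Y \notin W_Y$. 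Exceptional divisors and strict transforms of $W$-components together package as the lc condition, while strict transforms of prime divisors outside $W$ yield the strict inequality in (c) via $\ord_{E'}(\Gamma_Y) = \ord_E(\Gamma)$; conversely (c) feeds back all three cases. So $\sO_X \subseteq \mathcal{J}^W$, $\sO_X \subseteq \mathcal{J}^W_0$, and (c) are mutually equivalent.

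Finally, I would settle the reverse inclusions. For $\mathcal{J}^W_0$, the inclusion $\mathcal{J}^W_0 \subseteq \sO_X$ is automatic because $\Theta^W_0(\Gamma) \ge 0$ holds tautologically (the operator subtracts only components whose coefficient equals $1$), so part~(1) yields $\mathcal{J}^W_0 \subseteq \sO_X(-\lfloor \Theta^W_0(\Gamma)\rfloor) \subseteq \sO_X$ unconditionally, and the equivalence (b) $\Leftrightarrow$ (c) follows. For $\mathcal{J}^W$ the hypothesis $W \subseteq \Supp(\Delta + G)$ enters decisively: it forces $\ord_E(\Gamma) > 0$ for every component $E$ of $W$, and combined with the lc bound $\ord_E(\Gamma) \le 1$ pins $\ord_E(\Gamma)$ in $(0,1]$, so $\Theta^W$ subtracts at $E$ only when $\ord_E(\Gamma) = 1$, yielding $\Theta^W(\Gamma) \ge 0$ and $\mathcal{J}^W \subseteq \sO_X$ via part~(1). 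The main bookkeeping hurdle I expect is keeping the ``$\in \mathbb{Z}$'' rule for $\Theta^W$ separate from the ``$=1$'' rule for $\Theta^W_0$ throughout the case analysis, and pinpointing the precise role of $W \subseteq \Supp(\Delta+G)$ in reconciling the two.
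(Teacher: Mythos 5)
Your proposal is correct and follows essentially the same approach as the paper: both arguments fix a single log resolution, use the fact that $f$ is a local isomorphism over every codimension-one point of $X$ to identify the coefficient of $\Theta^{W_Y}(\Gamma_Y)$ at the strict transform $E' = f^{-1}_*E$ with the coefficient of $\Theta^W(\Gamma)$ at $E$, deduce (1) by pushing forward, and then settle (2) by combining (1) (plus $W \subseteq \Supp(\Delta+G)$, which forces $\Theta^W(\Gamma) \ge 0$) for the inclusion $\mathcal{J}^W \subseteq \sO_X$ with a coefficient-by-coefficient check that each of $\lfloor \Theta^{W_Y}(\Gamma_Y) \rfloor \le 0$ and $\lfloor \Theta^{W_Y}_0(\Gamma_Y) \rfloor \le 0$ amounts to condition (c). Your added remark that $\mathcal{J}^W_0 \subseteq \sO_X$ holds unconditionally is a correct (if minor) sharpening, and your invocation of the lc bound for the inclusion $\mathcal{J}^W \subseteq \sO_X$ is harmless but unnecessary: $W \subseteq \Supp(\Delta+G)$ plus effectivity already give $\Theta^W(\Gamma) \ge 0$, since a $W$-component with integer coefficient and positive coefficient has coefficient $\ge 1$.
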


\begin{proof}
We use the notation established in Theorem \ref{new ideal}. 

(1) Since $f_* W_Y = W$, $f_* \Delta_Y = \Delta$ and $f_* F =G$, one has 
\[
f_* (\Theta^{W_Y}(\Delta_Y + \lambda F )) = \Theta^W(\Delta + G),
\] 
which implies the first inclusion $\mathcal{J}^W(X, \Delta, \ba^\lambda) \subseteq \sO_X(- \lfloor \Theta^W(\Delta + \lambda G) \rfloor )$. 
The second inclusion is shown similarly. 

(2) First note that the fractional ideals $\mathcal{J}^W(X, \Delta, \ba^\lambda)$ and $\mathcal{J}^W_D(X, \Delta, \ba^\lambda)$ are ideals in $\sO_X$ by (1) and the assumption that $W$ is contained in the support of $\Delta+G$.
Therefore, (a) (resp. (b)) holds if and only if $\lfloor \Theta^{W_Y}(\Delta_Y + \lambda F ) \rfloor \le 0$ (resp. $\lfloor \Theta^{W_Y}_0(\Delta_Y + \lambda F) \rfloor \le 0$).
It is easy to see that these inequalities are equivalent to (c). 
\end{proof}

\subsection{An extension of Inversion of adjunction for slc singularities}
In this subsection, we prove an extension of the inversion of adjunction for slc singularities to the non-$\Q$-Gorenstein setting, using our variants of Fujino's non-lc ideal sheaves. 
As a corollary, we show that slc singularities deform to lc singularities if the total space is normal and the nearby fibers are $\Q$-Gorenstein, which is a generalization of a result of S.~Ishii \cite{ish}. 

First we show an analog of \cite[Proposition 4.1]{ST} for our variants of non-lc ideal sheaves. 
\begin{prop}\label{non-lc ideal twists}
Suppose that $(R, \m)$ is a normal local ring essentially of finite type over a field of characteristic zero, $\Delta$ is an effective $\Q$-Weil divisor on $X=\Spec R$, $\ba \subseteq R$ is a nonzero ideal and $\lambda > 0$ is a real number. 
Let $W$ be the reduced Weil divisor on $X$ whose support coincides with the union of the supports of $\Delta$ and the cycle of codimension one in $X$ associated to the closed subscheme defined by $\ba$. 
Let $A$ be an effective Weil divisor on $X$ linearly equivalent to $-K_X$ such that $B:=A-\Delta$ is also effective. 
Fix an integer $m \ge 1$ such that $m \Delta$ is an integral Weil divisor, and let $\bb \subseteq R$ is a nonzero ideal contained in $\sO_X(-mB)$.  
\begin{enumerate}[label=$(\arabic*)$]
\item $\mathcal{J}^W(X, A, \ba^\lambda \bb^{1-1/m})$ is contained in $\sO_X(-m B)$. 
\item If $\mathcal{J}^W(X, A, \ba^\lambda \sO_X(-m B)^{1-1/m}) =\sO_X(-mB)$, then $(X,\Delta, \ba^\lambda)$ is $m$-weakly valuatively lc.
\item Assume that $m (K_X+\Delta)$ is Cartier. 
If $(X, \Delta, \ba^\lambda)$ is lc, then 
$\bb$ is contained in $\mathcal{J}^W_{mB}(X, A, \ba^\lambda \bb^{1-1/m})$. 
\end{enumerate}
\end{prop}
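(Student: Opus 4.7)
The overall approach mimics the proof of Proposition 3.1, with adjoint ideals replaced by the variants of Fujino's non-lc ideals introduced in Subsection 4.1. Fix a log resolution $f: Y \to X$ of $(X, W \cup \Supp A, \ba \bb \sO_X(-mB))$ and write $\ba\sO_Y = \sO_Y(-F)$, $\bb\sO_Y = \sO_Y(-G)$, $\sO_X(-mB)\sO_Y = \sO_Y(-H)$; set $A_Y := f^*(K_X+A) - K_Y$, $W_Y := f^{-1}_* W + \mathrm{Exc}(f)$, and $\Delta^+_{m,Y} := -K_Y - \frac{1}{m}f^{\natural}(-m(K_X+\Delta))$. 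Since $K_X + A \sim 0$, write $K_X + A = \Div(\phi)$ for some $\phi \in K(X)^*$; then $-m(K_X+\Delta) = mB - \Div(\phi^m)$ as Weil divisors, from which one derives the essential numerical identity
\[
A_Y = \Delta^+_{m,Y} + \frac{1}{m} H
\]
on $Y$, in perfect analogy with the identity used in the proof of Proposition 3.1.

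For part (1), both sides of the desired inclusion are reflexive, so it suffices to check it on the open set $U \subseteq X$ where $mB$ is Cartier; on $U$, the decomposition $K_U + \Delta|_U = (K_X + A)|_U - B|_U$ shows that $K_U + \Delta|_U$ is $\Q$-Cartier, so $\mathcal{J}^{W|_U}(U, \Delta|_U, \ba|_U^\lambda)$ is defined. Monotonicity (Lemma 4.6(4)) followed by two applications of the transformation formula (Lemma 4.6(2)) --- first absorbing $\sO_U(-mB|_U)^{1-1/m}$ into the boundary via $A|_U + (m-1)B|_U = \Delta|_U + mB|_U$, then peeling off the Cartier divisor $mB|_U$ --- reduces the inclusion to $\mathcal{J}^{W|_U}(U, \Delta|_U, \ba|_U^\lambda) \subseteq \sO_U$. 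This holds because the specific choice of $W$ (the union of $\Supp \Delta$ and the codimension-one part of the scheme defined by $\ba$) ensures that the floor in the defining expression is nonnegative on every strict transform.

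For part (2), the numerical identity with $G = H$, combined with $\Theta^{W_Y}(\Gamma + H) = \Theta^{W_Y}(\Gamma) + H$ for integer $H$, yields
\[
\mathcal{J}^W(X, A, \ba^\lambda \sO_X(-mB)^{1-1/m}) = f_* \sO_Y\bigl(-H - \lfloor \Theta^{W_Y}(\Delta^+_{m,Y} + \lambda F) \rfloor \bigr).
\]
Equating with $\sO_X(-mB) = f_*\sO_Y(-H)$ and using that $\sO_Y(-H) = \sO_X(-mB)\sO_Y$ is globally generated with respect to $f$, exactly as in the (e)$\Rightarrow$(a) step of the proof of Proposition 3.1(2), one forces $\lfloor \Theta^{W_Y}(\Delta^+_{m,Y} + \lambda F) \rfloor \le 0$ on $Y$. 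A short case analysis on each prime divisor $E$ (whether $E \in W_Y$ and whether the relevant order is an integer) then gives $\ord_E(\Delta^+_{m,Y} + \lambda F) \le 1$, which is $m$-weak valuative log canonicity by Lemma 2.7.

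For part (3), the Cartier assumption on $m(K_X+\Delta)$ collapses $\Delta^+_{m,Y}$ to $\Delta_Y$ and makes $H$ Cartier; combined with $G \geq H$ (from $\bb \subseteq \sO_X(-mB)$), the inclusion $\bb \subseteq \mathcal{J}^W_{mB}$ reduces to the componentwise bound $\lfloor \Theta^{W_Y}_H(A_Y + \lambda F + (1-1/m)G)\rfloor \leq G$. Setting $\alpha_E := \ord_E(A_Y + \lambda F + (1-1/m)G)$ and splitting by whether $\Theta^{W_Y}_H$ subtracts $E$: in the subtracted branch ($E \in W_Y$ and $\alpha_E = \ord_E(H) + 1$), the bound reduces to $\ord_E(H) \leq \ord_E(G)$; in the unsubtracted branch, the log canonicity of $(X, \Delta, \ba^\lambda)$ gives $\ord_E(\Delta_Y + \lambda F) \leq 1$, from which $\alpha_E \leq \ord_E(G) + 1$, with the equality case forcing $\ord_E(\Delta_Y + \lambda F) = 1$ and $\ord_E(G) = \ord_E(H)$, hence $\alpha_E = \ord_E(H) + 1$ and $E \in W_Y$ --- contradicting the unsubtracted assumption. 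The main obstacle is exactly this edge-case bookkeeping in (3): verifying that the extremal case of the lc inequality always lands in the $+1$-shifted subtraction branch of $\Theta^{W_Y}_H$ is where the passage from $\mathcal{J}^W$ to $\mathcal{J}^W_{mB}$ becomes essential, and it crucially relies on the choice of $W$ to ensure that any $E$ with $\ord_E(\Delta_Y + \lambda F) = 1$ is either exceptional or the strict transform of a component of $\Supp W$.
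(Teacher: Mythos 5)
Your argument is correct, and parts (1) and (2) are in the same spirit as the paper, but part (3) takes a genuinely different route. In the paper's proof of (3), the authors first reduce to the untwisted case: since $mB$ is Cartier, they set $\q := \bb \otimes \sO_X(mB)$ and observe via Lemma \ref{J basic}(2) that the desired inclusion $\bb \subseteq \mathcal{J}^W_{mB}(X, A, \ba^\lambda \bb^{1-1/m})$ is equivalent to $\q \subseteq \mathcal{J}^W_0(X, \Delta, \ba^\lambda \q^{1-1/m})$. Working with $\Theta^{W_Y}_0$ rather than $\Theta^{W_Y}_{f^*(mB)}$ eliminates the $H$-dependence in the subtraction criterion (it becomes simply $\ord_E(\cdot)=1$ and $E \in W_Y$), and the bound $\lfloor \Theta^{W_Y}_0(\Delta_Y + \lambda F_a + \frac{m-1}{m} F_{\q}) \rfloor \leq F_{\q}$ falls out with less bookkeeping. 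You instead work directly with $\Theta^{W_Y}_{f^*(mB)}$ and carry both $G$ and $H$ through the estimate, which makes the case analysis somewhat heavier (in particular, the equality case $\alpha_E = \ord_E(G)+1$ requires the simultaneous conditions $\ord_E(\Delta_Y+\lambda F_a)=1$ and $\ord_E(H)=\ord_E(G)$). Your observation that such boundary cases are always absorbed into the subtraction branch because the choice of $W$ forces $E \in W_Y$ is exactly the same mechanism the paper uses, just applied before the reduction. Both arguments are valid; the paper's is cleaner, yours is more explicit about where the $+1$-shift in $\Theta^{W}_{mB}$ is doing the work.

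One small imprecision in your outline of (1): you describe the absorption of $\sO_U(-mB|_U)^{1-1/m}$ into the boundary as an application of Lemma \ref{J basic}(2), but that lemma handles adding a Cartier divisor to the boundary, not converting an ideal exponent into a boundary coefficient. The step is a separate (standard) identity for log resolutions. The paper avoids this by deferring to \cite[Proposition 4.1]{ST}, but the underlying argument matches yours. The direct argument you give for (2), concluding $\lfloor \Theta^{W_Y}(\Delta^+_{m,Y}+\lambda F) \rfloor \leq 0$ from global generation of $\sO_Y(-H)$, rather than arguing by contradiction from a hypothetical divisor with $a^+_{m,E}<-1$, is logically equivalent and equally valid.
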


\begin{proof}
(1) The assertion follows from arguments similar to the proof of \cite[Proposition 4.1]{ST} (1) by replacing \cite[Proposition 9.2.31]{Laz} with Lemma \ref{J basic} (ii). 

(2) Assume to the contrary that there exists a log resolution $f: Y \to X$ of $(X, A+W+\Delta, \ba,\sO_X(-mB))$ and a prime divisor $E$ on $Y$ such that $a_{m,E}^+(X, \Delta, \ba^\lambda)<-1$.
We write $\ba \sO_Y= \sO_Y(-F_a)$ and $\sO_X(-mB) \sO_Y = \sO_Y(-F_b)$. 
Since $K_X+A$ is Cartier, we have
\begin{align*}
\sO_X(m(K_X+\Delta)) \sO_Y &= \sO_X(m(K_X+A)-m B) \sO_Y\\
&=\sO_Y(m(K_Y+A_Y) -F_b), 
\end{align*}
where $A_Y : = f^*(K_X+A)- K_Y$.
Then 
\begin{align*}
a_{m, E}^+ (X, \Delta, \ba^{\lambda}) &= \ord_E ( K_Y + \frac{1}{m}(-m(K_Y+A_Y)+ F_b)  -\lambda F_a)\\
&= \ord_E (  -A_Y  - \lambda F_a + \frac{1}{m}F_b)\\
&= \ord_E(- A_Y  - \lambda F_a - \frac{m-1}{m} F_b) + \ord_E(F_b).
\end{align*}
By the assumption that $a_{m,E}^+(X, \Delta, \ba^{\lambda}) <-1$, one has
\[
\ord_E(\lfloor \Theta^{W_Y}(A_Y+ \lambda F + \frac{m-1}{m} F_b) \rfloor) > \ord_E(F_b),
\]
where $W_Y$ is the reduced divisor on $Y$ whose support is the union of the strict transform $f^{-1}_*W$ of $W$ and the exceptional locus $\mathrm{Exc}(f)$ of $f$. 
Therefore, 
\begin{align*}
\mathcal{J}^{W}(X, A, \ba^{\lambda}  \sO_X(-m B)^{1-1/m}) &= f_* \sO_Y(- \lfloor \Theta^{W_Y}(A_Y+ \lambda F_a + \frac{m-1}{m} F_b) \rfloor)\\
&\subsetneq f_* \sO_Y(-F_b)\\ 
&= \sO_X(-m B), 
\end{align*}
where the second strict containment follows from the fact that $\sO_Y(-F_b)$ is $f$-free. 
This is a contradiction. 

(3) First note that $mB$ is Cartier by assumption, and we set $\q : = \bb \otimes \sO_X(m B)$.
It then follows from Lemma \ref{J basic} (2) that the inclusion in (3) is equivalent to the inclusion 
$\q \subseteq \mathcal{J}^W_0 (X,  \Delta, \ba^{\lambda} \q^{1-1/m})$. 
Take a log resolution $f: Y \to X$ of $(X, W, \ba \q)$ with $\ba \sO_Y=\sO_Y(-F_a)$ and $\q \sO_Y=\sO_Y(-F_q)$. 
Since $(X,  \Delta, \ba^{\lambda})$ is lc, all the coefficients of $ \Delta_Y+ \lambda F_a$ is less than or equal to one, where $\Delta_Y:= f^*(K_X+\Delta)-K_Y$.
Noting that $F_q$ is an effective integral divisor on $Y$, we have 
\[
\lfloor \Theta^{W_Y}_0( \Delta_Y+ \lambda F_a + \frac{m-1}{m} F_q) \rfloor \le F_q, 
\]
where $W_Y$ is the reduced divisor on $Y$ whose support is the union of the strict transform $f^{-1}_*W$ of $W$ and the exceptional locus $\mathrm{Exc}(f)$ of $f$. 
Therefore, 
\[
\q \subseteq f_* \sO_Y(-F_q) \subseteq \mathcal{J}^W_0 (X, \Delta, \ba^{\lambda} \q^{1-1/m}).
\]
\end{proof}

We have the following restriction theorem for our variants of non-lc ideal sheaves. 

\begin{thm}\label{non-lc restriction}
Let $(R, \m)$ be a normal local ring essentially of finite type over an algebraically closed field of characteristic zero and $A$ be an effective $\Q$-Weil divisor on $X:=\Spec R$ such that $K_X+A$ is $\Q$-Cartier. 
Suppose that $h$ is a nonzero element in $R$ such that $S : = R/(h)$ is reduced and any irreducible component of $Z:=\Spec S$ is not contained in the support of $A$.
Let $\lambda, \lambda' >0$ be real numbers, $\ba, \bb \subseteq R$ be ideals that are trivial at any generic point of $Z$ and $W$ be a reduced Weil divisor on $X$ having no common components with $Z$.   
We assume that there exist an ideal $J \subseteq S$, an effective Cartier divisor $D$ on $Z^n$ and an open subset $U \subseteq Z$ satisfying the following three conditions:
\begin{enumerate}[label=\textup{(\roman*)}]
\item $J S^n \subseteq \sO_{Z^n}(-D) \cap \mathcal{J}_D(Z^n, \mathrm{Diff}_{Z^n}(A), (\ba S^n)^\lambda (\bb S^n)^{\lambda'})$,
\item $(J S^n)_x \ne \sO_{Z^n}(-D)_x$ for any point $x \in Z^n$ whose image in $Z$ is not contained in $U$,
\item $J|_U \subseteq \mathcal{J}^{W+Z}(X,A + Z, \ba^\lambda \bb^{\lambda'}) S|_U$. 
\end{enumerate}
Then we have $J \subseteq \mathcal{J}^{W+Z}(X,A +Z, \ba^\lambda \bb^{\lambda'}) S$. 
\end{thm}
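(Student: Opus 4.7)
My plan is to realize $\mathcal{J}^{W+Z}(X,A+Z,\ba^\lambda\bb^{\lambda'})$ as a pushforward from a common log resolution, analyze its restriction to the strict transform of $Z$, push everything down to $Z^n$ via the different, and then chase the resulting diagram using hypotheses (i)--(iii). Take a log resolution $f\colon Y\to X$ of $(X,A+W+Z,\ba\bb)$ with $\ba\sO_Y=\sO_Y(-F_a)$ and $\bb\sO_Y=\sO_Y(-F_b)$, arranged so that the strict transform $\tilde Z:=f^{-1}_*Z$ is smooth and $f|_{\tilde Z}$ factors as $g\colon\tilde Z\to Z^n$ composed with the normalization $\nu\colon Z^n\to Z$, with $g$ itself a log resolution of $(Z^n,\Diff_{Z^n}(A),\ba S^n\cdot\bb S^n\cdot \sO_{Z^n}(-D))$. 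Put $W_Y:=f^{-1}_*W+\Exc(f)$, $\Gamma_Y:=f^*(K_X+A+Z)-K_Y$, and $\Gamma_Y^+:=\Gamma_Y+\lambda F_a+\lambda'F_b$. Because $A$, $\ba$, and $\bb$ are trivial at the generic point of $Z$ while $Z$ is a Cartier divisor sharing no component with $W$, we have $\ord_{\tilde Z}(\Gamma_Y^+)=1$ and $\tilde Z$ is not a component of $W_Y$, so Lemma~\ref{Theta basic}(6) yields $\Theta^{W_Y+\tilde Z}(\Gamma_Y^+)=\Theta^{W_Y}(\Gamma_Y^+)-\tilde Z$. Setting $N:=-\lfloor\Theta^{W_Y+\tilde Z}(\Gamma_Y^+)\rfloor$ therefore gives $\mathcal{J}^{W+Z}(X,A+Z,\ba^\lambda\bb^{\lambda'})=f_*\sO_Y(N)$ and $\mathcal{J}^{W}(X,A+Z,\ba^\lambda\bb^{\lambda'})=f_*\sO_Y(N-\tilde Z)$.

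Next, I analyze the restriction of $\sO_Y(N)$ along $\tilde Z$. Adjunction $(K_Y+\tilde Z)|_{\tilde Z}=K_{\tilde Z}$ together with the defining property of the different (Lemma~\ref{Diff vs Cond2}) identifies $\Gamma_Y^+|_{\tilde Z}-\tilde Z|_{\tilde Z}$ with the log-resolution data on $\tilde Z$ for $(Z^n,\Diff_{Z^n}(A),(\ba S^n)^\lambda(\bb S^n)^{\lambda'})$ via $g$. Tracking the $\Theta$-operation under restriction, a component of $W_Y$ with integer coefficient in $\Gamma_Y^+$ that meets $\tilde Z$ corresponds under $g$ to a component of $W_{\tilde Z}$ whose coefficient in the $Z^n$-side log discrepancy equals $\ord(g^*D)+1$; this is exactly the role of the subscript $D$ in $\mathcal{J}_D$. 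Applying Fujino's vanishing for the maximal non-lc ideal to obtain $R^1f_*\sO_Y(N-\tilde Z)=0$ and pushing the short exact sequence $0\to \sO_Y(N-\tilde Z)\to \sO_Y(N)\to \sO_{\tilde Z}(N|_{\tilde Z})\to 0$ forward, we obtain a restriction exact sequence
\[
0\to \mathcal{J}^W(X,A+Z,\ba^\lambda\bb^{\lambda'})\to \mathcal{J}^{W+Z}(X,A+Z,\ba^\lambda\bb^{\lambda'})\to \nu_*\bigl(\sO_{Z^n}(-D)\cap \mathcal{J}_D(Z^n,\Diff_{Z^n}(A),(\ba S^n)^\lambda(\bb S^n)^{\lambda'})\bigr)\to 0
\]
of coherent $\sO_X$-modules (with the right-hand term regarded as an $\sO_X$-module supported on $Z$).

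To conclude, I reduce modulo $h$: the displayed sequence induces a surjection $\mathcal{J}^{W+Z}\cdot S\twoheadrightarrow \nu_*(\sO_{Z^n}(-D)\cap \mathcal{J}_D(\dots))$, whose kernel is $\mathcal{J}^W/(\mathcal{J}^{W+Z}\cap(h))$. An element $j\in J$ lies in $\mathcal{J}^{W+Z}\cdot S$ iff a lift can be found in $\mathcal{J}^{W+Z}$ that differs from any given lift of $j$ to $R$ by a multiple of $h$. Hypothesis (i) places $JS^n$ inside $\sO_{Z^n}(-D)\cap \mathcal{J}_D$, identifying the image of $J$ under the surjection with an element that already lies in the target; hypothesis (iii) provides the local lift on the open set $U$; and hypothesis (ii) ensures that away from $U$ the image of $j$ sits strictly below $\sO_{Z^n}(-D)$, so the obstruction class to gluing the local lifts over $U$ is forced to lie in the kernel of the surjection, producing the global lift. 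This gives the desired inclusion $J\subseteq \mathcal{J}^{W+Z}(X,A+Z,\ba^\lambda\bb^{\lambda'})\cdot S$. The main obstacle is the identification in step two: matching the restriction of $\Theta^{W_Y+\tilde Z}(\Gamma_Y^+)$ to $\tilde Z$ with $\Theta^{W_{\tilde Z}}_{g^*D}$ applied to the $Z^n$-side data through adjunction and the different requires meticulous codimension-one bookkeeping on strict-transform, exceptional, and $W$-adjacent components. A secondary technical point is the vanishing $R^1f_*\sO_Y(N-\tilde Z)=0$, which should follow from a Fujino-type vanishing for the $\Theta^W$ construction once $N-\tilde Z$ is rewritten as $-\lfloor\Theta^{W_Y}(\Gamma_Y^+)\rfloor$.
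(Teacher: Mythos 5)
There are genuine gaps in your argument, and they are not peripheral: they occur exactly at the points where the paper has to work hardest.

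First, the claimed vanishing $R^1 f_* \sO_Y(N-\tilde Z)=0$ (with $N-\tilde Z = -\lfloor\Theta^{W_Y}(\Gamma_Y^+)\rfloor$) is false in general. For non-lc ideal sheaves of $\Theta^W$-type one does \emph{not} have a local vanishing theorem; what Fujino--Ambro provide is a torsion-freeness (strata) theorem: a nonzero local section of $R^1 f_*$ must have support containing $f(T)$ for some stratum $T$ of the relevant simple normal crossing pair on $Y$. The paper's proof accepts that $R^1$ may be nonzero and instead shows that the obstruction class $\delta(r)\in\mathrm{Coker}\,\beta\subseteq R^1 f_*$ has support inside $V=Z\setminus U$, while the strata theorem forces any nonzero element of $R^1$ to have support not contained in $V$; hence $\delta(r)=0$. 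Your short exact sequence, and the clean identification of its third term with $\nu_*\bigl(\sO_{Z^n}(-D)\cap\mathcal J_D(\cdots)\bigr)$, would indeed follow if the vanishing held, but it does not.

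Second, you never exploit the decomposition $W_Y = W_Y^1 + W_Y^2$ according to whether a component maps into $V$ or not. This decomposition is the mechanism by which hypotheses (ii) and (iii) enter: hypothesis (ii) controls what happens along components of $W_Y^1$ (those over $V$) via the divisor $D$, while hypothesis (iii) handles the open set $U$, and the two are combined precisely because $\Lambda=\Theta^{W_Y^2}(\Gamma+\lambda F_a+\lambda' F_b)$ (note: $W_Y^2$, not all of $W_Y$) is the divisor to which the strata theorem is applied. Your step-two heuristic ``a component with integer coefficient meeting $\tilde Z$ corresponds to a component with coefficient $\ord(g^*D)+1$'' is not correct as stated: $D$ is not produced intrinsically by adjunction or by the structure of the log resolution; it is supplied externally by the hypotheses, and the role of the subscript $D$ in $\mathcal J_D$ is to implement hypothesis (ii) on the $V$-side, via the Claim ``$J\subseteq g_*\sO_{\tilde Z}(-\lfloor\Lambda|_{\tilde Z}\rfloor)$'' in the paper's argument. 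Without this, the identification you need in your middle step does not go through.

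In short: the two things you flag as ``the main obstacle'' and ``a secondary technical point'' are in fact where the proof fails. The bookkeeping does not reduce to matching $\Theta$-operations under restriction, and the secondary vanishing is not available; both are replaced in the actual argument by the $W_Y^1/W_Y^2$ decomposition and the Ambro--Fujino strata theorem.
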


\begin{proof}
Let $V$ be the complement of $U$ in $Z$. 
Take a log resolution $f : Y \to X$ of $(X, \ba, \bb)$ with $\ba \sO_Y= \sO_Y(-F_a)$ and $\bb \sO_Y= \sO_Y(-F_b)$ such that $f^{-1}(V)$ is a closed subset of pure codimension one in $Y$ and that the union of $f^{-1}(\Supp A)$, $f^{-1}(Z)$, $f^{-1}(W)$, $f^{-1}(V)$, the support of the divisor $F_a+F_b$ and the exceptional locus $\mathrm{Exc}(f)$ of $f$ is a simple normal crossing divisor on $Y$. 
Let $g$ and $h$ denote the induced morphisms $g: \widetilde{Z} \to Z$ and $h: \widetilde{Z} \to Z^n$, respectively, where $\widetilde{Z}$ is the strict transform of $Z$ on $Y$. 
Let $W_{Z^n}$ be the union of the support of $D+\mathrm{Diff}_{Z^n}(A)$ and all the codimension one irreducible components of the closed subscheme of $Z^n$ defined by $\ba \bb S^n$. 
After replacing $f$ by its blowing up, we may assume that $h^{-1}(W_{Z^n}) \subseteq \Exc(f)$. 
Then $h: \widetilde{Z} \to Z^n$ is a log resolution of $(Z^n, \mathrm{Diff}_{Z^n}(A)+W_{Z^n}, 
(\ba S^n) (\bb S^n))$ with $(\ba S^n) \sO_{\widetilde{Z}}= \sO_{\widetilde{Z}}(-F_a|_{\widetilde{Z}})$ and $(\bb S^n) \sO_{\widetilde{Z}}= \sO_{\widetilde{Z}}(-F_b|_{\widetilde{Z}})$. 

Let $W_Y$ (resp.~$W_{\widetilde{Z}}$) be the reduced divisor on $Y$ (resp.~$\widetilde{Z}$) whose support is the union of the strict transform $f^{-1}_* W$ (resp.~$h^{-1}_* W_{Z^n}$) and the exceptional locus of $f$ (resp.~$g$).
Since $h^{-1}(W_{Z^n})$ is contained in the exceptional locus of $f$, we have $W_Y|_{\widetilde{Z}} \ge W_{\widetilde{Z}}$.
We decompose $W_Y=W_Y^1 + W_Y^2$ as follows:
\begin{enumerate}
\item[$\textup{(a)}$] $f(W_Y^1) \subset V$, 
\item[$\textup{(b)}$] no irreducible components of $W_Y^2$ are mapped into $V$ by $f$.
\end{enumerate}

Set $\Gamma : = f^*(K_X+A+Z)- K_Y- \widetilde{Z}$. 
Since $\Gamma|_{\widetilde{Z}} = h^*( K_{Z^n}+ \mathrm{Diff}_{Z^n}(A)) - K_{\widetilde{Z}}$ (see \cite[Paragraph 4.7]{Kol}), 
\begin{align*}
\mathcal{J}_D(Z^n, \mathrm{Diff}_{Z^n}(A), (\ba S^n)^\lambda (\bb S^n)^{\lambda'}) 
&=\mathcal{J}^{W_{Z^n}}_D(Z^n, \mathrm{Diff}_{Z^n}(A), (\ba S^n)^\lambda (\bb S^n)^{\lambda'})\\
&= h_* \sO_{\widetilde{Z}} ( - \lfloor \Theta^{W_{\widetilde{Z}}}_{h^*D}(\Gamma|_{\widetilde{Z}} + \lambda F_a|_{\widetilde{Z}} + \lambda' F_b|_{\widetilde{Z}}) \rfloor)\\
& \subseteq h_* \sO_{\widetilde{Z}} ( - \lfloor \Theta^{W_Y|_{\widetilde{Z}}}_{h^*D}(\Gamma|_{\widetilde{Z}} + \lambda F_a|_{\widetilde{Z}} + \lambda' F_b|_{\widetilde{Z}}) \rfloor)\\
& \subseteq h_* \sO_{\widetilde{Z}} ( - \lfloor \Theta^{W_Y^1|_{\widetilde{Z}}}_{h^*D}(\Theta^{W_Y^2|_{\widetilde{Z}}}(\Gamma|_{\widetilde{Z}} + \lambda F_a|_{\widetilde{Z}} + \lambda' F_b|_{\widetilde{Z}})) \rfloor),
\end{align*}
where the third containment follows from Lemma \ref{Theta basic} (3) and the last containment does from Lemma \ref{Theta basic} (1), (4) and (6).
Setting $\Lambda : = \Theta^{W_Y^2}(\Gamma + \lambda F_a + \lambda' F_b)$ and noting that the union of the supports of $\widetilde{Z}$, $W_Y^2$ and $\Gamma+\lambda F_a + \lambda' F_b$ is a simple normal crossing divisor on $Y$, 
one has 
\[
\Lambda|_{\widetilde{Z}} = \Theta^{W_Y^2|_{\widetilde{Z}}}(\Gamma|_{\widetilde{Z}} + \lambda F_a|_{\widetilde{Z}} + \lambda' F_b|_{\widetilde{Z}}),
\]
and therefore,
\[
\mathcal{J}_D(Z^n, \mathrm{Diff}_{Z^n}(A), (\ba S^n)^\lambda (\bb S^n)^{\lambda'}) \subseteq h_* \sO_{\widetilde{Z}} ( - \lfloor \Theta^{W_Y^1|_{\widetilde{Z}}}_{h^*D}(\Lambda|_{\widetilde{Z}}) \rfloor).
\]
\begin{claim}
$J \subseteq g_*\sO_{\widetilde{Z}} (- \lfloor \Lambda|_{\widetilde{Z}} \rfloor )$. 
\end{claim}
\begin{proof}[Proof of Claim]
It is enough to show that  $J S^n \subseteq h_* \sO_{\widetilde{Z}} (- \lfloor \Lambda|_{\widetilde{Z}} \rfloor )$. 
Take a connected component $C$ of $Z^n$, and let $\widetilde{C}$ denote the corresponding component of $\widetilde{Z}$.
By the assumption (i), for any nonzero element $r \in H^0(C, J S^n|_C)$, 
\[
\Div_{\widetilde{C}}(r) \ge \lfloor \Theta^{W_Y^1|_{\widetilde{Z}}}_{h^*D}(\Lambda|_{\widetilde{Z}}) \rfloor|_{\widetilde{C}} \; \; \textup{and} \; \; \Div_{\widetilde{C}}(r)  \ge  h^* D|_{\widetilde{C}}.
\]
Fix any prime divisor $E$ on $\widetilde{C}$.
If $\ord_E( \Lambda|_{\widetilde{Z}}) = \ord_E(h^*D) + 1$ and $E$ is contained in $W_Y^1|_{\widetilde{Z}}$, 
then  $ g( E) \subseteq V$ by the definition of $W_Y^1$, and it therefore follows from the assumption (ii) that 
\[
\ord_E(r) \ge \ord_E( h^*D) + 1 = \ord_E( \Lambda|_{\widetilde{Z}}).
\]
If $\ord_E( \Lambda|_{\widetilde{Z}}) \neq \ord_E(h^*D) + 1$ or $E \not\subseteq W_Y^1|_{\widetilde{Z}}$, then 
\[
\ord_E( \Theta^{W_Y^1|_{\widetilde{Z}}}_{g^*D}(\Lambda|_{\widetilde{Z}}))=\ord_E( \Lambda|_{\widetilde{Z}}).
\]
Thus, we obtain the inequality $\Div_{\widetilde{C}}(r) \ge \lfloor \Lambda|_{\widetilde{Z}} \rfloor|_{\widetilde{C}}$, which implies $J S^n \subseteq h_* \sO_{\widetilde{Z}} (- \lfloor \Lambda|_{\widetilde{Z}} \rfloor )$. 
\end{proof}

By the above claim, we have the following commutative diagram:
\[
\xymatrix{
 f_*\sO_Y(- \lfloor \Theta^{W_Y^1}(\Lambda) \rfloor)   \ar^-{\alpha}[r] & g_* \sO_{\tilde{Z}}(- \lfloor (\Theta^{W_Y^1}(\Lambda)|_{\tilde{Z}} \rfloor)  \\
f_* \sO_Y(- \lfloor \Lambda \rfloor) \ar@{^{(}->}^-{}[u] \ar^-{\beta}[r] & g_* \sO_{\tilde{Z}}(- \lfloor \Lambda|_{\tilde{Z}} \rfloor) \ar@{^{(}->}^-{}[u] \\
& J. \ar@{^{(}->}^-{}[u].
}\]
Noting that $\mathcal{J}^{W+Z}(X,A+ Z, \ba^\lambda \bb^{\lambda'}) = f_* \sO_Y(- \lfloor \Theta^{W_Y^1}(\Lambda) \rfloor)$ by Lemma \ref{Theta basic} (6), we have
\[
\mathrm{Im}\; \beta \subseteq \mathrm{Im}\; \alpha = \mathcal{J}^{W+Z}(X,A+ Z, \ba^\lambda \bb^{\lambda'}) S.
\]
Take any element $r \in J$. 
In order to prove the assertion of this theorem, it suffices to prove that 
the morphism $\delta: J \hookrightarrow g_* \sO_Y(- \lfloor \Lambda|_{\widetilde{Z}} \rfloor) \to \mathrm{Coker}\; \beta$ sends $r$ to zero. 
Since $(f_* W_Y^1)|_U=0$, one has an inclusion $J|_U \subseteq \mathcal{J}^{W+Z}(X,A + Z, \ba^\lambda \bb^{\lambda'}) S|_U=\mathrm{Im}\; \beta|_U$ by the assumption (iii), which implies that 
the support of $\delta(r) \in \mathrm{Coker}\; \beta$ is contained in $V$.

On the other hand, by pushing forward the short exact sequence
\[
0 \to \sO_Y(- \lfloor \Lambda \rfloor -\widetilde{Z}) \to \sO_Y(- \lfloor \Lambda \rfloor) \to \sO_{\widetilde{Z}}(- \lfloor \Lambda|_{\widetilde{Z}} \rfloor) \to 0,
\]
we obtain an inclusion $\mathrm{Coker}\; \beta \subseteq R^1f_*\sO_Y(- \lfloor \Lambda \rfloor -\widetilde{Z})$.
Let $H$ 
be the $f$-semiample $\R$-divisor $- (K_Y +\Gamma + \widetilde{Z} + \lambda F_a + \lambda' F_b) $ on $Y$, $\Delta$ 
be the fractional part of the $\R$-divisor $\Gamma + \lambda F_a + \lambda' F_b$ and $B$ be the reduced divisor on $Y$ whose support is the union of all prime divisors $E$ such that $E \subseteq W_Y^2$ and $E \not\subseteq \Supp \Delta$.
Since $B+\Delta$ has simple normal crossing support and 
\[
-\lfloor \Lambda \rfloor -\widetilde{Z} = (K_Y+ B+ \Delta)+H,
\]
it follows from \cite[Theorem 3.2]{Amb} (see also \cite[Theorem 1.1]{Fuj0}) that if 
\[
\delta(r) \in  R^1f_* \sO_Y(-\lfloor \Lambda \rfloor -\widetilde{Z}) 
\] is an nonzero element, then the support of $\delta(r)$ contains $f(T)$, where $T$ is a stratum of the simple normal crossing pair $(Y, B)$. 
Taking into account that $B$ and $f^{-1}(V)$ have no common components and $B+f^{-1}(V)$ has simple normal crossings, we have $f(T) \not\subseteq V$, which contradicts the fact that  the support of $\delta(r)$ is contained in $V$.
Therefore, we conclude that $\delta(r)=0$ as desired.
\end{proof}

\begin{setting}\label{setting for inversion}
Let $(R,\m)$ be an equidimensional local ring essentially of finite type over an algebraically closed field of characteristic zero, $h \in R$ be a nonzero divisor, $\lambda>0$ be a real number and $\ba \subseteq R$ be an ideal with the following properties:
\begin{enumerate}
\item[(1)] $S : = R/(h)$ is reduced and satisfies $(S_2)$ and $(G_1)$. Therefore, so is $R$. 
\item[(2)] $\ba$ is nonzero at any generic point of $X: = \Spec R$ and is trivial at any generic point of $Z: =\Spec S$. 
\item[(3)] Any generic point of $Z$ is a regular point of $X$.
\end{enumerate}
Moreover, let $\Delta$ be an effective $\Q$-Weil divisor on $X$ contained in $\WDiv^*_\Q(X)$, and let $K_X$ and $K_Z$ be canonical divisors contained in $\WDiv^*(X)$ and $\WDiv^*(Z)$, respectively, which exist by Proposition \ref{canonical divisor exists} and Example \ref{biequidim vs equidim}. 
We further assume that  
\begin{enumerate}
\item[(4)] neither any generic points of $Z$ nor any codimension one singular points of $Z$ are contained in the support of $\Delta$, and
\item[(5)] $K_Z + \Delta|_Z$ is $\Q$-Cartier, where $\Delta|_Z$ denotes the $\Q$-Weil divisor $\mathrm{Diff}_Z(\Delta)$ (see Lemma \ref{restriction of divisor}).
\end{enumerate}
\end{setting}

The main result of this section is an extension of the inversion of adjunction for slc singularities to the non-$\Q$-Gorenstein setting, which is stated as follows.  
\begin{thm}\label{slc cut}
In setting \ref{setting for inversion}, if $(Z, \Delta|_Z, (\ba S)^{\lambda})$ is lc, then $(X,\Delta +Z, \ba^{\lambda})$ is valuatively lc.  
If we further assume the condition 
\begin{enumerate}
\item[$(6)$] the pull back $Z' : = Z \times_X X^n$ of $Z$ to the normalization $X^n = \Spec R^n$ of $X$ satisfies $(S_2)$, 
\end{enumerate}
then the slc case also holds, that is, if $(Z, \Delta|_Z, (\ba S)^{\lambda})$ is slc, then $(X,\Delta +Z, \ba^{\lambda})$ is valuatively slc. 
\end{thm}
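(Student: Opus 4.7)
The plan is to run the Nakayama-style argument of Theorem \ref{vklt cut} with the variant non-lc ideal sheaves of Definition \ref{new ideal} in place of adjoint ideal sheaves, using Proposition \ref{non-lc ideal twists} and Theorem \ref{non-lc restriction} in place of Proposition \ref{adjoint ideal twists} and the classical restriction theorem for multiplier ideal sheaves.

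For the lc case, since $a^+_E = \lim_m a^+_{m,E}$ it suffices to prove that $(X, \Delta + Z, \ba^\lambda)$ is $m$-weakly valuatively lc for each $m \ge 1$ with $m\Delta$ integral and $m(K_Z + \Delta|_Z)$ Cartier. Fix such $m$; exactly as in the klt proof, Setting \ref{setting for inversion} (3) permits the choice of an effective Weil divisor $A \sim -K_X$ with no common component with $Z$ and $B := A - \Delta \ge 0$. Set $\bb := \sO_X(-mB)$, $W_X := \Supp(\Delta) \cup \{\text{codim-1 components of }V(\ba)\}$, and $I := \mathcal{J}^{W_X + Z}(X,\, A + Z,\, \ba^\lambda\,\bb^{1-1/m})$. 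Because $Z = \Div(h)$ is principal one has $A+Z \sim -K_X$ and $(A+Z)-(\Delta+Z) = B$, so Proposition \ref{non-lc ideal twists} (1) gives $I \subseteq \bb$, and by Proposition \ref{non-lc ideal twists} (2) it remains to show $\bb \subseteq I$. Since $B$ has no component in $Z$, $h$ is a nonzerodivisor on $R/\bb$ and $\bb \cap (h) = h\bb$; once we know $\bb S \subseteq IS$, Nakayama's lemma applied to the finitely generated $R$-module $\bb/I$ (with $h \in \m$) forces $\bb = I$. I obtain $\bb S \subseteq IS$ by invoking Theorem \ref{non-lc restriction} with boundary $A$, ideals $\ba$ and $\bb$, exponents $\lambda$ and $\lambda' := 1 - 1/m$, reduced divisor $W_X$, input ideal $J := \bb S$, Cartier divisor $D := mB|_Z$ on $Z^n = Z$, and $U \subseteq Z$ the open subset where $K_X+\Delta+Z$ is $\Q$-Cartier. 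Condition (i) is supplied by Proposition \ref{non-lc ideal twists} (3) applied to the lc triple $(Z, \Delta|_Z, (\ba S)^\lambda)$ (whose Cartier hypothesis matches our choice of $m$) with the ideal $\bb S \subseteq \sO_Z(-mB|_Z)$, and condition (iii) follows on $U$ because classical inversion of adjunction for lc singularities gives that $(X, \Delta+Z, \ba^\lambda)$ is lc near $Z \cap U$, which by Remark \ref{remark on val lc} and Proposition \ref{non-lc ideal twists} (2) forces $I = \bb$ on $U$. The set $U$ is dense in $Z$ since it contains every generic point of $Z$ by Setting \ref{setting for inversion} (3).

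For the slc case one replays the same argument after passing to the normalization $\nu : X^n \to X$. Condition (6) ensures that $Z' := Z \times_X X^n = V(h) \subseteq X^n$ is a Cartier divisor satisfying $(S_2)$ and $(G_1)$, and by Setting \ref{setting for inversion} (3) the map $Z' \to Z$ is birational so $(Z')^n = Z^n$. Lemmas \ref{Diff vs Cond1} and \ref{Diff vs Cond2} identify $\Diff_{Z^n}(\nu^*\Delta + C_X)$ with the slc boundary $\nu_Z^*(\Delta|_Z)+C_Z$ on the normal scheme $Z^n$, which is lc by the slc hypothesis on $Z$. Running the Nakayama-plus-restriction argument on $X^n$ with boundary $\nu^*\Delta+C_X$ cut by $Z'$, and using the normal scheme $Z^n$ in conditions (i) and (ii) of Theorem \ref{non-lc restriction}, then establishes that $(X^n,\,\nu^*\Delta+Z'+C_X,\,(\ba\sO_{X^n})^\lambda)$ is valuatively lc, i.e., $(X,\Delta+Z,\ba^\lambda)$ is valuatively slc.

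The main obstacle I expect is condition (ii) of Theorem \ref{non-lc restriction}: at points of $Z$ where $K_X+\Delta+Z$ fails to be $\Q$-Cartier, the ideal $\bb \otimes_R S$ need not be strictly smaller than $\sO_Z(-mB|_Z)$, so the strict containment required by (ii) must be forced by perturbing $\ba$ to $\ba \cdot \mathcal{I}^\epsilon$ for an ideal $\mathcal{I}$ vanishing on the non-$\Q$-Cartier locus of $K_X+\Delta+Z$, in the spirit of Proposition \ref{weak sense vs strong sense}, and then taking the limit $\epsilon \to 0$ via the infimum characterization of $a^+_E$. A secondary subtlety in the slc case is the careful bookkeeping of the different and the identification $(Z')^n = Z^n$, where condition (6) is critically used.
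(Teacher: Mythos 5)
Your overall architecture—Nakayama on $\bb / \mathcal{J}^{W+Z}(\cdots)$, fed by Theorem \ref{non-lc restriction} to get the containment modulo $(h)$, with Proposition \ref{non-lc ideal twists} supplying conditions (i) and the final passage to $m$-weakly valuative lc—matches the paper's, as does the reduction of the slc case to the normalization $X^n$ via Lemmas \ref{Diff vs Cond1} and \ref{Diff vs Cond2}. Your choice of $U$ (the locus where $K_X+\Delta+Z$ is $\Q$-Cartier) is in fact equivalent to the paper's choice (the locus where $\bb S' = L$), so that part is fine.

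The genuine gap is exactly where you flag it: condition (ii) of Theorem \ref{non-lc restriction}. But your proposed fix is wrong in principle. Perturbing $\ba$ to $\ba \cdot \mathcal{I}^\epsilon$ cannot help, because condition (ii) compares $J = \bb S$ against $L = \sO_Z(-mB|_Z)$, and neither of these depends on $\ba$; so the perturbation changes the $\mathcal{J}$-sheaves in (i) and (iii) but leaves (ii) untouched. Moreover, Proposition \ref{weak sense vs strong sense} is a device for passing from $m$-weakly to honestly valuative \emph{plt} by fixing $\epsilon$ and running over $m$, not a device for letting $\epsilon \to 0$; the non-lc ideals are defined with floor functions and are not continuous in $\epsilon$. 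The correct argument is the paper's Claim~2, which is precisely the missing idea: since $\bb$ is $(S_2)$ (being a reflexive ideal) and $Z'$ is a Cartier divisor, the module $\bb \otimes_{R^n} S'$ has no embedded primes, so the surjection $\bb \otimes_{R^n} S' \to \bb S'$ is an isomorphism. Hence if $(\bb S')_x = L_x$ is invertible, then $\bb_x \otimes \sO_{Z',x}$ is free of rank one, whence $\bb_x$ itself is invertible (Nakayama along $Z'$), so $mB_{X^n}$ is Cartier at $x$ and $x \in V$. The contrapositive gives exactly (ii) outside the Cartier locus, with no perturbation needed. You also gloss over where the $(S_2)$ hypothesis on $Z'$ (condition (6)) enters; the paper uses it in Claim~1, to reduce the inclusion $\bb S' \subseteq L$ to codimension one points of $Z'$ via $L = \bigcap_x L_x$. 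Both $(S_2)$ statements—for $\bb$ and for $Z'$—are load-bearing, and neither appears in your sketch.

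In short: the skeleton is right, but the argument breaks exactly at condition (ii), and the repair requires the torsion-freeness of $\bb \otimes S'$ coming from $(S_2)$ and the Cartier-ness of $Z'$, not an $\epsilon$-perturbation of $\ba$.
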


\begin{proof}
We only consider the slc case, as the lc case follows essentially the same arguments. 

First note that since the morphism $Z' \to Z$ is finite and birational, $Z'$ is reduced and the normalization $Z^n=\Spec S^n$ of $Z$ is isomorphic to that of $Z'$.
We consider the following diagram:
\[
\xymatrix{
 X  & X^n \ar_-{\nu}[l] & \\
Z \ar@{^{(}->}^-{}[u]  & Z' \ar@{^{(}->}^-{}[u] \ar^-{\mu}[l] & Z^n=(Z')^n. \ar^-{\pi}[l] \ar@/^18pt/[ll]^{\rho} \ar_-{f}[ul]
}\]

By prime avoidance, we can take an effective Weil divisor $A \in \WDiv^*(X)$, linearly equivalent to $-K_X$, whose support contains neither any generic points of $Z$ nor any codimension one singular points of $Z$.
We may also assume that $B:= A -\Delta$ is effective.
Fix an integer $m \ge 1$ such that $m \Delta \in \WDiv^*_{\Q}(X)$ is an integral Weil divisor and $m(K_Z+\Delta|_Z) \in \WDiv^*_{\Q}(Z)$ is a Cartier divisor.
It then follows from Lemma \ref{restriction of divisor} (2) that $m B|_Z = m A|_Z - m \Delta|_Z \sim -m(K_Z +\Delta|_Z)$ is Cartier. 

We define the $\Q$-Weil divisors $\Delta_{X^n}$, $A_{X^n}$ and $B_{X^n}$ on $X^n$ as
\[
\Delta_{X^n} : = \nu^* \Delta + C_X, \ \ A_{X^n} : = \nu^* A + C_X \textup{ and } B_{X^n} : = A_{X^n}- \Delta_{X^n},
\] 
where $C_X$ is the conductor divisor of $\nu$ on $X^n$. 
Let $W$ be the reduced Weil divisor on $X^n$ whose support coincides with the union of the support of $\Delta_{X^n}$ and all the codimension one irreducible components of the closed subscheme of $X^n$ defined by $\ba R^n$. 
Since $A_{X^n} \sim -K_{X^n}$, it suffices to show by Proposition \ref{non-lc ideal twists} (2) that 
\begin{align*}
\bb &= \mathcal{J}^{W+Z'}(X^n, A_{X^n}, \sO_{X^n}(-Z')^1 (\ba R^n)^\lambda \bb^{1-1/m}) \\
&= \mathcal{J}^{W+Z'}(X^n, A_{X^n} +Z', (\ba R^n)^\lambda \bb^{1-1/m}) \tag{$\star$}
\end{align*}
where $\bb : = \sO_{X^n}(-mB_{X^n})$.

\begin{cln}
Let $S' = R^n/(h)$ be the structure ring of $Z'$ and $L \subseteq S'$ denote the principal ideal $\sO_{Z'}(-\mu^* (mB|_Z))$.
Then $\bb S' \subseteq L$. 
\end{cln}

\begin{proof}[Proof of Claim $1$]
Noting that $B_{X^n} = \nu^* B$, we see that the ideal $\bb \subseteq R^n$ is the reflexive hull of $\sO_X(-m B) R^n$.
Since $m B$ is Cartier at any codimension one point of $Z$ by an argument analogous to the proof of Lemma \ref{restriction of divisor} (1), the inclusion map $\sO_X(-m B) S' \hookrightarrow \bb S'$ is the identity at any codimension one point of $Z'$.
Composing with the inclusion $\sO_X(-m B) S \subseteq \sO_Z(-m B|_Z)$, we obtain the inclusion $\bb S' \subseteq L$ at any codimension one point of $Z'$.
It follows from the fact that $L$ is invertible and $Z'$ satisfies $(S_2)$ that $L=\bigcap_{x} L_x$ where $x$ runs through all codimension one points of $Z'$, which implies the desired inclusion $\bb S' \subseteq L$.
\end{proof}

As an intermediate step to prove $(\star)$, we show the inclusion 
\[
\bb S' \subseteq \mathcal{J}^{W+Z'}(X^n, A_{X^n} +Z', (\ba R^n)^\lambda \bb^{1-1/m}) S'.
\]
By Proposition \ref{non-lc restriction}, it is enough to show that if we set $\lambda' : = (m-1)/m$, $J : = \bb S'$ and $D : =\rho^* (m B|_Z)$ and if $U \subseteq Z'$ denotes the locus where $J=L$, then the assumptions (i), (ii) and (iii) in Proposition \ref{non-lc restriction} are satisfied.
We define $\Q$-Weil divisors $\Delta_{Z^n}$ and $A_{Z^n}$ on $Z^n$ as 
\[
\Delta_{Z^n} : = \rho^* \Delta|_Z + C_Z \textup{ and } A_{Z^n} : = \rho^* A|_Z + C_Z, 
\]
where $C_Z$ is the conductor divisor of $\rho$ on $Z^n$.
The assumption (i) is an immediate consequence of Claim 1 and  Proposition \ref{non-lc ideal twists} (3), because $\mathrm{Diff}_{Z^n}(A_{X^n}) =A_{Z^n} \sim - K_{Z^n}$ by Lemmas  \ref{Diff vs Cond1}, \ref{Diff vs Cond2} and \ref{restriction of divisor} and $D = m(A_{Z^n} - \Delta_{Z^n})$. 
Since $L$ is a principal ideal, $J_x \subseteq \m_{Z', x} L_x$ for all points $x \in Z' \setminus U$, from which the assumption (ii) follows. 
In order to verify the assumption (iii), we need the following claim. 
\begin{cln}
Let $V \subseteq X^n$ be the locus where $mB_{X^n}$ is Cartier. Then $U \subseteq V \cap Z $.
\end{cln}

\begin{proof}[Proof of Claim $2$]
Since $\bb$ satisfies $(S_2)$, $\bb \otimes_{R^n} S'$ is torsion-free, and therefore, we have an isomorphism $\bb \otimes_{R^n} S' \cong \bb S'=J$.  
If $x \in Z'$ is contained in $U$, then $\bb_x \otimes_{\sO_{X^n, x}} \sO_{Z',x} \cong J_x =L_x$ is an invertible $\sO_{Z', x}$-module, which implies that $\bb_x$ is an invertible $\sO_{X^n, x}$-module, that is, $m B_{X^n}$ is Cartier at $x$.
Thus, we obtain the assertion. 
\end{proof}

Let $\widetilde{U} : = V \cap Z'$ and $\widetilde{U}^n : = \pi^{-1}(\widetilde{U}) \subseteq Z^n$.
By Lemma \ref{J basic}, we have
\begin{align*}
&\mathcal{J}^{W+Z'}(X^n,A_{X^n} +Z', (\ba R^n)^{\lambda} \bb^{\lambda'})|_V \\
=& \mathcal{J}^{W|_V+ \widetilde{U}}(V, A_{X^n}|_V +\widetilde{U}, (\ba R^n)|_V^{\lambda} \bb|_V^{\lambda'}) \\
=& \mathcal{J}^{W|_V+ \widetilde{U}}(V, A_{X^n}|_V+\widetilde{U} + \frac{m-1}{m}(mB_{X^n}|_V) , (\ba R^n)|_V^{\lambda}) \\
=& \mathcal{J}^{W|_V+ \widetilde{U}}(V, \Delta_{X^n}|_V+ \widetilde{U} , (\ba R^n)|_V^{\lambda}) \otimes_{\sO_V} \sO_V(-mB_{X^n}|_V),
\end{align*}
where the second equality follows from the fact that $mB_{X^n}|_V$ is Cartier. 
Since the triple $(Z, \Delta|_Z, (\ba S)^\lambda)$ is slc and 
\[
\Delta_{Z^n}|_{\widetilde{U}^n} = \mathrm{Diff}_{Z^n}(\Delta_{X^n})|_{\widetilde{U}^n}= \mathrm{Diff}_{\widetilde{U}^n}(\Delta_{X^n}|_V)
\] 
by Lemmas \ref{Diff vs Cond1}, \ref{Diff vs Cond2} and \ref{restriction of divisor}, the triple $(\widetilde{U}^n, \mathrm{Diff}_{\widetilde{U}^n}(\Delta_{X^n}|_V), (\ba \sO_{\widetilde{U}^n} )^\lambda)$ is lc.
Noting that $m(K_V + \Delta_{X^n}|_V +\widetilde{U})$ is Cartier, we use inversion of adjunction for lc singularities \cite{Kaw}\footnote{Kawakita proved inversion of adjunction for lc pairs, but his proof works for triples.} 
to deduce that there exists an open subscheme $\widetilde{V} \subseteq V$ containing $\widetilde{U}$ such that $(\widetilde{V}, \Delta_{X^n}|_{\widetilde{V}} +\widetilde{U}, \ba|_{\widetilde{V}}^{\lambda})$ is lc, which is equivalent by Lemma \ref{J vs lc} (2) to saying that 
$\mathcal{J}^{W|_{\widetilde{V}} + \widetilde{U}}(\widetilde{V}, \Delta_{X^n}|_{\widetilde{V}} +\widetilde{U}, \ba|_{\widetilde{V}}^{\lambda}) = \sO_{\widetilde{V}}$.
Therefore, 
\[
\mathcal{J}^{W+ Z'}(X^n,A_{X^n}+Z', (\ba R^n)^{\lambda} \bb^{\lambda'})|_{\widetilde{V}} = \sO_{\widetilde{V}}(-mB_{X^n}|_{\widetilde{V}}),
\]
and it follows from Claim 2 that the assumption (iii) of Theorem \ref{non-lc restriction} is satisfied. 
Thus, we obtain the inclusion
\[
\bb S' \subseteq \mathcal{J}^{W+ Z'}(X^n, A_{X^n}+Z', (\ba R^n)^\lambda \bb^{1-1/m}) S'.
\] 

Finally, combining this inclusion with Proposition \ref{non-lc ideal twists} (1) yields that 
\[
\bb \subseteq \mathcal{J}^{W+ Z'}(X^n, A_{X^n}+Z', (\ba R^n)^\lambda \bb^{1-1/m}) + \bb \cap (h).
\]
Since $B_{X^n}$ has no common component with $Z'$, the ideal $\bb \cap (h)$ is contained in $h \bb$. 
By Nakayama's lemma, one has the desired inclusion $(\star)$, that is, 
\[
\bb=\mathcal{J}^{W+ Z'}(X^n, A_{X^n}+Z', (\ba R^n)^\lambda \bb^{1-1/m}). 
\]
\end{proof}

\begin{cor}\label{slc cut2}
In setting \ref{setting for inversion}, we further assume the condition  
\begin{enumerate}
\item[$(6')$] $R$ is normal. 
\end{enumerate}
If $(Z, \Delta|_Z, (\ba S)^{\lambda})$ is slc, then $(X,\Delta +Z, \ba^{\lambda})$ is valuatively lc.
\end{cor}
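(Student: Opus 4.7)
The plan is to show that Corollary \ref{slc cut2} is essentially a direct application of the slc half of Theorem \ref{slc cut}, once we verify that the extra hypothesis of being normal reduces the situation to a setting where condition (6) is automatic and where ``valuatively slc'' coincides with ``valuatively lc''.

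First I would check that condition (6) of Theorem \ref{slc cut} holds under the assumption (6$'$) that $R$ is normal. Since $R$ is normal, the normalization $\nu:X^n\to X$ is an isomorphism and the conductor divisor $C_X$ on $X^n$ vanishes. Consequently $Z' = Z\times_X X^n$ can be identified with $Z$ itself, and $Z$ satisfies $(S_2)$ by condition (1) of Setting \ref{setting for inversion}. This verifies the hypothesis (6) needed to invoke the slc statement of Theorem \ref{slc cut}.

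With (6) in hand, Theorem \ref{slc cut} applied to the slc triple $(Z,\Delta|_Z,(\ba S)^{\lambda})$ yields that $(X,\Delta+Z,\ba^{\lambda})$ is valuatively slc. To conclude, I would unpack Definition \ref{defn slc}(2) in the present situation: valuative slc of $(X,\Delta+Z,\ba^{\lambda})$ means that $(X^n,\nu^*(\Delta+Z)+C_X,(\ba\sO_{X^n})^{\lambda})$ is valuatively lc. Since $R$ is normal, $\nu=\mathrm{id}_X$, $C_X=0$, and $\nu^*(\Delta+Z)=\Delta+Z$, so the condition reduces precisely to the valuative log canonicity of $(X,\Delta+Z,\ba^{\lambda})$, as required.

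There is no real obstacle here: the corollary is a formal specialization of Theorem \ref{slc cut}. The only two points to make explicit are (a) that normality of $R$ guarantees (6) gratis via $Z'=Z$, and (b) that on a normal base the notions of valuatively slc and valuatively lc agree, which is an immediate consequence of $\nu$ being the identity.
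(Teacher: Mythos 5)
Your proposal is correct and is essentially the paper's own argument: the authors likewise note that since $X^n \cong X$, condition (6) holds automatically (with $Z' = Z$ satisfying $(S_2)$ by Setting \ref{setting for inversion}(1)), apply the slc case of Theorem \ref{slc cut}, and use that valuatively slc for the normal scheme $X$ is by Definition \ref{defn slc}(2) exactly valuatively lc since $\nu = \mathrm{id}$ and $C_X = 0$.
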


\begin{proof}
This is an immediate consequence of Theorem \ref{slc cut}. 
Since $X^n \cong X$, the assumption (6) in Theorem \ref{slc cut} is clearly satisfied. 
\end{proof}

\begin{cor}\label{slc cut3}
In setting \ref{setting for inversion}, we further assume that 
\begin{enumerate}
\item[$(6'')$] there exists an effective $\Q$-Weil divisor $\Theta$ such that $K_{X^n}+\Theta$ is $\Q$-Cartier and $\Theta \le \nu^*\Delta + C_X$, where $\nu:X^n \to X$ is the normalization of $X$ and $C_X$ is the conductor divisor of $\nu$ on $X^n$.
\end{enumerate}
If $(Z, \Delta|_Z, (\ba S)^{\lambda})$ is slc, then $(X,\Delta +Z, \ba^{\lambda})$ is valuatively slc. 
\end{cor}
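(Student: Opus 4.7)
The plan is to adapt the proof of Theorem \ref{slc cut}'s slc case, using the $\Q$-Cartier divisor $K_{X^n}+\Theta$ furnished by $(6'')$ as a substitute for the $(S_2)$-ness of $Z'$ that served the role of assumption $(6)$ there. By Definition \ref{defn slc}, what we must show is that $(X^n,\Delta^n+Z',(\ba\sO_{X^n})^\lambda)$ is valuatively lc, where $\Delta^n:=\nu^*\Delta+C_X$ and $Z':=\nu^{-1}(Z)$.

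Following Theorem \ref{slc cut}, I would first pick an effective Weil divisor $A\in\WDiv^*(X)$ with $A\sim-K_X$ avoiding the generic and codimension-one singular points of $Z$, set $B:=A-\Delta$, and pass to the conductor-corrected pullbacks $A_{X^n},B_{X^n}$ on $X^n$. By Proposition \ref{non-lc ideal twists}(2), the desired valuative lc-ness reduces to the equality $\bb=\mathcal{J}^{W+Z'}(X^n,A_{X^n}+Z',(\ba R^n)^\lambda\bb^{1-1/m})$ for every sufficiently divisible integer $m$, where $\bb:=\sO_{X^n}(-mB_{X^n})$. The inclusion ``$\supseteq$'' is immediate from Proposition \ref{non-lc ideal twists}(1), and for the reverse one invokes Theorem \ref{non-lc restriction} together with Nakayama's lemma, exactly as in Theorem \ref{slc cut}. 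The slc hypothesis on $(Z,\Delta|_Z,(\ba S)^\lambda)$, combined with Lemmas \ref{Diff vs Cond1}, \ref{Diff vs Cond2} and \ref{restriction of divisor}, supplies the lc-ness of the corresponding triple on $Z^n$, which by Proposition \ref{non-lc ideal twists}(3) yields hypothesis (i) of Theorem \ref{non-lc restriction}.

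The step at which $(6'')$ must take over from $(6)$ is the analog of Claim~1 in Theorem \ref{slc cut}, namely the inclusion $\bb S'\subseteq L$ for the invertible sheaf $L:=\sO_{Z'}(-\mu^*(mB|_Z))$ on $Z'$. In Theorem \ref{slc cut} this was deduced codimension-one-wise on $Z'$ and then extended globally via the $(S_2)$-ness of $Z'$. Under $(6'')$, the replacement I propose is to use $\Theta\le\Delta^n$ together with the $\Q$-Cartier-ness of $K_{X^n}+\Theta$ to introduce a $\Q$-Cartier surrogate $mB'_{X^n}$ built from $A$ and $\Theta$ that agrees with $mB_{X^n}$ at all codimension-one points of $Z'$, and to derive the required inclusion from the invertibility of $\sO_{X^n}(-mB'_{X^n})$ and a reflexive-hull computation on $X^n$, which is normal hence $(S_2)$, using Lemma \ref{S2}.

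The hard part will be precisely this replacement of Claim~1: the effective difference $\Delta^n-\Theta$ is only $\Q$-Weil and not $\Q$-Cartier, so I have to track carefully how it interacts with the reflexive closures appearing in the ideal calculations in order to promote a codimension-one inclusion on $Z'$ to the required global inclusion. I expect this bookkeeping to draw additionally on the AC-divisor formalism developed in the appendix, which is the natural framework for handling Weil divisor restrictions to a possibly non-$(S_2)$ divisor such as $Z'$.
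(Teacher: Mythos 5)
Your proposal goes down a much harder road than the paper's, and I don't think the critical step works as you've sketched it. The paper's proof of this corollary is essentially a two-line reduction: it shows that $(6'')$ \emph{implies} $(6)$, so that Theorem~\ref{slc cut} applies verbatim. Concretely: since $\Theta\le\nu^*\Delta+C_X$, monotonicity of the different gives $\Diff_{Z^n}(\Theta)\le\Diff_{Z^n}(\nu^*\Delta+C_X)=\rho^*(\Delta|_Z)+C_Z$ (via Lemmas~\ref{Diff vs Cond1}, \ref{Diff vs Cond2}, \ref{restriction of divisor}), so the slc hypothesis on $(Z,\Delta|_Z)$ forces $(Z^n,\Diff_{Z^n}(\Theta))$ to be lc. Because $K_{X^n}+\Theta+Z'$ is $\Q$-Cartier, Kawakita's inversion of adjunction then shows $(X^n,\Theta+Z')$ is lc near $Z'$, and Alexeev's Theorem~3.4 (the $(S_2)$-ness of boundary divisors in lc pairs) yields that $Z'$ satisfies $(S_2)$. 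This is exactly condition $(6)$, and Theorem~\ref{slc cut} finishes.

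Your approach, by contrast, tries to rerun Theorem~\ref{slc cut}'s proof while bypassing the $(S_2)$-ness of $Z'$, and the weak point is precisely where you flag it: replacing Claim~1. The surrogate $B'_{X^n}:=A_{X^n}-\Theta$ satisfies $B'_{X^n}\ge B_{X^n}$ since $\Delta^n-\Theta\ge 0$, so $\sO_{X^n}(-mB'_{X^n})\subseteq\bb$ --- the inclusion is the wrong way around and $\bb'$ cannot stand in for $\bb$ in the Nakayama argument. More seriously, there is no reason for $B'_{X^n}$ to agree with $B_{X^n}$ at codimension-one points of $Z'$ (condition~(4) in Setting~\ref{setting for inversion} controls $\Delta$, not $\Theta$, near those points), so the claimed codimension-one agreement fails. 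And even if you had the codimension-one inclusion $\bb S'\subseteq L$, the target $L$ is a sheaf on $Z'$, so the $(S_2)$-ness of the normal total space $X^n$ gives you no leverage to globalize it; it is exactly $(S_2)$-ness of $Z'$ itself that Claim~1 needs. The fix is not to work around $(S_2)$-ness but to establish it, which is what Kawakita plus Alexeev accomplish.
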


\begin{proof}
As in the proof of Theorem \ref{slc cut}, we consider the following diagram.
\[
\xymatrix{
 X  & X^n \ar_-{\nu}[l] & \\
Z \ar@{^{(}->}^-{}[u]  & Z' \ar@{^{(}->}^-{}[u] \ar^-{\mu}[l] & Z^n=(Z')^n \ar^-{\pi}[l] \ar@/^18pt/[ll]^{\rho} \ar_-{f}[ul]
}\]
Since $\Diff_{Z^n}(\Theta) \le \Diff_{Z^n}(\nu^* \Delta +C_X) =\rho^*(\Delta|_Z) +C_Z$ by Lemmas \ref{Diff vs Cond1}, \ref{Diff vs Cond2} and \ref{restriction of divisor}, the pair $(Z^n, \Diff_{Z^n}(\Theta))$ is lc. 
We use inversion of adjunction for lc singularities \cite{Kaw} to deduce that $(X^n, \Theta + Z')$ is lc near $Z'$.
It then follows from \cite[Theorem 3.4]{Ale} that $Z'$ satisfies $(S_2)$.
Now we apply Theorem \ref{slc cut} to obtain the result. 
\end{proof}

As a corollary, we obtain results on deformations of slc singularities. 

\begin{cor}\label{slc local}
With notation as in Setting \ref{local setting}, let $x \in X$ be a closed point and $\mathcal{Z} \subseteq \mathcal{X}$ be an irreducible closed subscheme such that $(\mathcal{X}, i, \mathcal{Z}, j)$ is a deformation of the pair $(X, \{x\}_{\mathrm{red}})$ over $T$ with reference point $t$. 
Let $y$ be the generic point of $\mathcal{Z}$, which lies in the generic fiber $\mathcal{X}_\eta$. 
We assume that the following conditions are satisfied: 

\begin{enumerate}[label=\textup{(\arabic*)}]
\item $T$ is a smooth curve, 
\item $K_{X}+\mathcal{D}|_X$ is $\Q$-Cartier at $x$.
\end{enumerate}
If $(X, \mathcal{D}|_X, (\ba \sO_X)^\lambda)$ is lc at $x$, then $(\mathcal{X}_\eta, \mathcal{D}_\eta, \ba_\eta^\lambda)$ is valuatively lc at $y$.
If we further assume the condition 
\begin{enumerate}
\item[$(3)$] the closed fiber $\mathcal{X}^n_t$ of the normalization $\mathcal{X}^n$ of $\mathcal{X}$ satisfies $(S_2)$, 
\end{enumerate}
then the slc case also holds, that is, if $(X, \mathcal{D}|_X, (\ba \sO_X)^\lambda)$ is slc at $x$, then $(\mathcal{X}_\eta, \mathcal{D}_\eta, \ba_\eta^\lambda)$ is valuatively slc at $y$.
\end{cor}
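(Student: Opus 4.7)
The plan is to mirror the proof of Corollary \ref{vklt local}, using Theorem \ref{slc cut} in place of Corollary \ref{vklt cut2}. The role of the hypothesis that $T$ is a smooth curve is to guarantee that, locally at $x$, the closed fiber $X$ is a principal Cartier divisor in $\mathcal{X}$ cut out by a single regular element, which is exactly what is needed to place ourselves in Setting \ref{setting for inversion}.

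I would first reduce to the local setup of Theorem \ref{slc cut}. Set $R := \sO_{\mathcal{X}, x}$ and let $h \in R$ be the pullback of a uniformizer at $t \in T$. Since $\mathcal{X}$ is flat over $T$, the element $h$ is a nonzero divisor, and $S := R/(h) = \sO_{X, x}$. The reduced, $(S_2)$, $(G_1)$ hypotheses on $\mathcal{X}$ and $X$ in Setting \ref{local setting} yield conditions (1)--(3) of Setting \ref{setting for inversion}; the assumption that the support of $\mathcal{D}$ meets neither the generic points nor the codimension one singular points of $X$ supplies condition (4); and since $X$ is Cartier at $x$, Lemma \ref{restriction of divisor}(2) identifies $\mathcal{D}|_X$ with $\mathrm{Diff}_X(\mathcal{D})$, so hypothesis (2) of the corollary provides condition (5).

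For the first assertion I would apply the lc statement of Theorem \ref{slc cut}: since $(X, \mathcal{D}|_X, (\ba \sO_X)^\lambda)$ is lc at $x$, the triple $(\Spec R, \mathcal{D}_x + Z, \ba^\lambda)$ with $Z = \Spec S$ is valuatively lc. By Proposition \ref{singularity at a point} this spreads to valuative lc-ness on an open neighborhood $U \subseteq \mathcal{X}$ of $x$; the generic point $y$ of $\mathcal{Z}$ is a generalization of $x$, hence $y \in U$, and on a smaller open $V \subseteq U$ around $y$ the divisor $X|_V$ is empty. Restricting $(V, \mathcal{D}|_V + X|_V, \ba|_V^\lambda) = (V, \mathcal{D}|_V, \ba|_V^\lambda)$ to $V \cap \mathcal{X}_\eta$ shows that $(\mathcal{X}_\eta, \mathcal{D}_\eta, \ba_\eta^\lambda)$ is valuatively lc at $y$. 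For the slc case, hypothesis (3) is exactly condition (6) of Theorem \ref{slc cut}, because $Z \times_X X^n$ in our setup is $X \times_\mathcal{X} \mathcal{X}^n = \mathcal{X}^n_t$. The slc statement then yields that $(\Spec R, \mathcal{D}_x + Z, \ba^\lambda)$ is valuatively slc, and the same generalization-and-restriction argument -- now applied on the normalization $\mathcal{X}^n$ -- transports valuative lc-ness of $(\mathcal{X}^n, \nu^*(\mathcal{D} + X) + C_\mathcal{X}, (\ba \sO_{\mathcal{X}^n})^\lambda)$ from $\nu^{-1}(x)$ to the generic fiber.

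The main subtlety lies in the slc case, where one must identify $(\mathcal{X}_\eta)^n$ with $(\mathcal{X}^n)_\eta$ and check that the conductor divisor and $\nu^*\mathcal{D}$ restrict compatibly to the generic fiber. Both hold automatically: because every irreducible component of $\mathcal{X}$ dominates the smooth curve $T$, so does every component of $\mathcal{X}^n$; hence $\mathcal{X}^n$ is flat over $T$, localization at $\eta$ commutes with normalization, and the $\mathcal{H}om$-definition of the conductor ideal localizes as well. Beyond this identification, the argument is purely formal; the real content sits inside Theorem \ref{slc cut}.
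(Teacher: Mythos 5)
Your proposal is correct and follows essentially the same route as the paper: localize at $x$, verify that the conditions of Setting \ref{setting for inversion} are met (with $T$ a smooth curve supplying the regular element $h$ and hypothesis (2) of the corollary matching condition (5)), invoke Theorem \ref{slc cut}, and then spread the valuative (s)lc-ness from $x$ to its generalization $y$ via Remark \ref{remark on slc} and Proposition \ref{singularity at a point}. A minor stylistic difference: the paper's proof drops the boundary component $X$ immediately (implicitly using that removing an effective divisor from the boundary preserves valuative (s)lc-ness), whereas you keep $\mathcal{D}+X$ in the boundary until shrinking to an open set around $y$ where $X$ is empty — a cleaner bookkeeping that sidesteps that monotonicity step, but the underlying argument is the same.
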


\begin{proof}
It follows from Theorem \ref{slc cut}, Corollary \ref{slc cut2} and Corollary \ref{slc cut3} that $(\mathcal{X}, \mathcal{D}, \ba^\lambda)$ is valuatively (s)lc at $x$.
Since $y$ is a generalization of $x$, the triple $(\mathcal{X}, \mathcal{D}, \ba^\lambda)$ is valuatively (s)lc at $y$ by Remark \ref{remark on slc}, which completes the proof.
\end{proof}

\begin{rem}
Koll\'ar points out in a draft of his book \cite[Theorem 5.33]{Kol2}, whose method can be traced back to his joint work \cite[Corollary 5.5]{KSB} with Shepherd-Barron, that the slc case of Corollary \ref{slc local} holds without the condition (3). 
However, since his proof heavily depends on the existence of lc modifications, we believe that our proof, which uses only the vanishing theorem for lc singularities, is of independent interest. 
\end{rem}

\begin{cor}\label{slc proper}
With notation as in Setting \ref{local setting}, we further assume that the following conditions are all satisfied:
\begin{enumerate}[label=\textup{(\arabic*)}]
\item $T$ is a smooth curve, 
\item $K_{X}+\mathcal{D}|_X$ is $\Q$-Cartier,
\item $\mathcal{X}$ is proper over $T$.
\end{enumerate} 
If $(X, \mathcal{D}|_X, (\ba \sO_X)^\lambda)$ is lc, then $(\mathcal{X}_{\eta}, \mathcal{D}_{\eta}, \ba_{\eta}^\lambda)$ is valuatively lc. 
If we further assume the condition 
\begin{enumerate}
\item[$(4)$] the closed fiber $\mathcal{X}^n_t$ of the normalization $\mathcal{X}^n$ of $\mathcal{X}$ satisfies $(S_2)$, 
\end{enumerate}
then the slc case also holds, that is, if $(X, \mathcal{D}|_X, (\ba \sO_X)^\lambda)$ is slc, then $(\mathcal{X}_\eta, \mathcal{D}_\eta, \ba_\eta^\lambda)$ is valuatively slc.
\end{cor}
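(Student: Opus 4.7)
My plan is to mimic the proof of Corollary \ref{vklt proper} in the klt setting: reduce to the local version Corollary \ref{slc local} applied at every closed point of the closed fiber $X=\mathcal{X}_t$, and then use properness of $\mathcal{X}\to T$ to propagate the valuative (s)lc property from the closed fiber to the generic fiber.

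For each closed point $x\in X$, I would first check that the hypotheses of Corollary \ref{slc local} hold at $x$. Since $\mathcal{X}\to T$ is proper and surjective, we can pick any generic point of $\mathcal{X}_\eta$ whose closure passes through $x$; this closure is an irreducible closed subscheme $\mathcal{Z}\subseteq\mathcal{X}$ that, after possibly shrinking, provides a deformation of $(X,\{x\}_{\mathrm{red}})$ over $T$ with reference point $t$. Assumption (1) that $T$ is a smooth curve and assumption (2) that $K_X+\mathcal{D}|_X$ is $\Q$-Cartier (globally, hence at $x$) transfer directly to the assumptions of Corollary \ref{slc local}; and in the slc case, the $(S_2)$ assumption (4) on the closed fiber $\mathcal{X}^n_t$ of the normalization is exactly the hypothesis (3) of Corollary \ref{slc local}. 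Inspecting the proof of that corollary, it actually establishes valuative (s)lc of $(\mathcal{X},\mathcal{D},\ba^\lambda)$ at $x$ itself, via Theorem \ref{slc cut}, Corollary \ref{slc cut2}, or Corollary \ref{slc cut3}, and only then specializes to the generic point $y$ of $\mathcal{Z}$. So I would record the stronger conclusion: $(\mathcal{X},\mathcal{D},\ba^\lambda)$ is valuatively (s)lc at every closed point of $X$.

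Next let $\mathcal{U}\subseteq\mathcal{X}$ denote the locus where $(\mathcal{X},\mathcal{D},\ba^\lambda)$ is valuatively (s)lc; this locus is open by Proposition \ref{singularity at a point} together with Remark \ref{remark on slc}. The previous step shows $\mathcal{U}$ contains every closed point of $X$. Since $X$ is of finite type over an algebraically closed field, its closed points are very dense, and openness of $\mathcal{U}$ in $\mathcal{X}$ then forces $X\subseteq\mathcal{U}$. Now properness of $\mathcal{X}\to T$ makes it a closed map, so the image of the closed set $\mathcal{X}\setminus\mathcal{U}$ is closed in $T$ and disjoint from $t$. As $T$ is a smooth curve, this image is a finite set of closed points, in particular it avoids the generic point $\eta$, so $\mathcal{X}_\eta\subseteq\mathcal{U}$. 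That is precisely the asserted valuative (s)lc property of the generic fiber.

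The main (rather mild) obstacle in this outline is making sure the proof of Corollary \ref{slc local} indeed yields valuative (s)lc of the \emph{total space} at $x$ rather than only at the specialization point $y$; as noted, this is visible from how that proof directly invokes Corollaries \ref{slc cut2} and \ref{slc cut3}. Once that is in hand, the propagation from $X$ to $\mathcal{X}_\eta$ is a purely formal consequence of openness of the valuatively (s)lc locus and properness of $\mathcal{X}\to T$, in exact parallel with the klt argument in Corollary \ref{vklt proper}.
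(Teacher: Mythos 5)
Your proposal follows essentially the same route as the paper's (deliberately terse) proof: use the inversion-of-adjunction results (Theorem \ref{slc cut}, Corollaries \ref{slc cut2} and \ref{slc cut3}) to conclude that $(\mathcal{X},\mathcal{D},\ba^{\lambda})$ is valuatively (s)lc at every closed point of the closed fiber $X$, then combine openness of the valuatively (s)lc locus with the fact that $\mathcal{X}\to T$ is a closed map to propagate the property to a neighborhood of $\mathcal{X}_{\eta}$. This is exactly what the paper means by ``an argument similar to the proof of Corollary \ref{slc local}.''

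One remark on your exposition, which is not a gap but an unnecessary detour: the step where you claim to produce an irreducible closed subscheme $\mathcal{Z}\subseteq\mathcal{X}$, flat over $T$, with $\mathcal{Z}\times_{\mathcal{X}}X=\{x\}_{\mathrm{red}}$, is not justified as stated. The closure in $\mathcal{X}$ of a closed point of $\mathcal{X}_{\eta}$ will in general meet $X$ in several points or in a non-reduced point, and shrinking cannot fix that; so the hypotheses of Corollary \ref{slc local} as a black box are not verified. However, as you yourself observe, the conclusion you actually use is that $(\mathcal{X},\mathcal{D},\ba^{\lambda})$ is valuatively (s)lc at $x$, and this flows directly from Theorem \ref{slc cut} and Corollaries \ref{slc cut2}, \ref{slc cut3} applied to the Cartier divisor $X\subset\mathcal{X}$, with no reference to any $\mathcal{Z}$. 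Once that is noted, the remainder of your argument (Jacobson density of closed points of $X$, openness of the valuatively (s)lc locus via Proposition \ref{singularity at a point} and Remark \ref{remark on slc}, and properness of $\mathcal{X}\to T$) is correct and matches the paper.
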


\begin{proof}
Since the structure map $\mathcal{X} \to T$ is a closed map, it follows from an argument similar to the proof of Corollary \ref{slc local} that $(\mathcal{X}, \mathcal{D}, \ba^\lambda)$ is valuatively (s)lc near $\mathcal{X}_{\eta}$, which implies the assertion. 
\end{proof}

\begin{rem}
In Corollary \ref{slc local} (resp.~Corollary \ref{slc proper}), the condition (3) (resp.~(4)) is satisfied for example if one of the following holds: 
\begin{enumerate}[label=\textup{(\alph*)}]
  \item $\mathcal{X}$ is normal, or 
  \item there exists an effective $\Q$-Weil divisor $\Theta$ on $\mathcal{X}^n$ such that $K_{\mathcal{X}^n}+\Theta$ is $\Q$-Cartier and $\Theta \le \nu^*\mathcal{D} + C_{\mathcal{X}}$, where $\nu: \mathcal{X}^n \to \mathcal{X}$ is the normalization of $\mathcal{X}$ and $C_{\mathcal{X}}$ is the conductor divisor of $\nu$ on $\mathcal{X}^n$.
  \end{enumerate}
This follows from arguments similar to the proofs of Corollaries \ref{slc cut2} and \ref{slc cut3}. 
\end{rem}

Finally, we show that slc singularities are invariant under small deformations if the total space is normal and the nearby fibers are $\Q$-Gorenstein. 

\begin{cor}\label{general lc fibers}
Let $T$ be a smooth curve over an algebraically closed field $k$ of characteristic zero and $(\mathcal{X}, \mathcal{D}, \ba^{\lambda}) \to T$ be a proper flat family of triples over $T$, where $\mathcal{D}$ is an effective $\Q$-Weil divisor on a normal variety $\mathcal{X}$ over $k$, $\ba \subseteq \sO_X$ is a nonzero coherent ideal sheaf and $\lambda > 0$ is a real number. 
\begin{enumerate}[label=\textup{(\arabic*)}]
\item 
Suppose that $k$ is an uncountable algebraically closed field. 
If some closed fiber $(\mathcal{X}_{t_0}, \mathcal{D}_{t_0}, (\ba \sO_{\mathcal{X}_{t_0}})^{\lambda})$ is slc and if a general closed fiber $(\mathcal{X}_t, \mathcal{D}_t)$ is log $\Q$-Gorenstein, then $(\mathcal{X}_{t}, \mathcal{D}_{t}, (\ba \sO_{\mathcal{X}_{t}})^{\lambda})$ is lc.
\item If some closed fiber $(\mathcal{X}_{t_0}, \mathcal{D}_{t_0}, (\ba \sO_{\mathcal{X}_{t_0}})^{\lambda})$ is two-dimensional lc, then so is a general closed fiber $(\mathcal{X}_{t}, \mathcal{D}_{t}, (\ba \sO_{\mathcal{X}_{t}})^{\lambda})$. 
\end{enumerate}
\end{cor}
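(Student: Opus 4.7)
\emph{Step 1 (generic fiber is valuatively lc).} My plan is to reduce via Corollary \ref{slc proper} to a statement about the generic fiber. Because $\mathcal{X}$ is normal, the normalization $\mathcal{X}^n\to\mathcal{X}$ is the identity, so the closed fiber of $\mathcal{X}^n$ over $t_0$ is $\mathcal{X}_{t_0}$, which is $(S_2)$ by virtue of being slc. Thus the hypotheses of the slc case of Corollary \ref{slc proper} (including condition $(4)$) hold, and I obtain that $(\mathcal{X}_\eta, \mathcal{D}_\eta, (\ba\sO_{\mathcal{X}_\eta})^\lambda)$ is valuatively slc; since $\mathcal{X}_\eta$ inherits normality from $\mathcal{X}$, this reads as valuatively lc.

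\emph{Step 2 (generic fiber is lc).} For part $(1)$, I plan to deduce that $\mathcal{X}_\eta$ is log $\Q$-Gorenstein from the hypothesis on general closed fibers together with $k$ uncountable, and then appeal to Remark \ref{remark on val lc}. Concretely, for each $m\ge 1$ the locus $W_m\subseteq T$ over which $m(K_{\mathcal{X}/T}+\mathcal{D})$ is Cartier is open, by openness of the invertibility locus of the reflexive sheaf $\sO_{\mathcal{X}}(m(K_{\mathcal{X}/T}+\mathcal{D}))$ together with properness of $\mathcal{X}\to T$. After a standard comparison, the union $\bigcup_m W_m(k)$ contains the dense open set of log $\Q$-Gorenstein closed fibers; uncountability of $k$ then forces some $W_m$ to be nonempty, hence to contain $\eta$. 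Combined with Step 1 and Remark \ref{remark on val lc}, this gives lc on the generic fiber. For part $(2)$, no uncountability argument is needed: $\mathcal{X}_\eta$ is two-dimensional, so Lemma \ref{2-dim val lc} directly promotes valuatively lc to lc.

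\emph{Step 3 (spreading out).} Starting from a log resolution of $(\mathcal{X}_\eta, \mathcal{D}_\eta, \ba_\eta)$, I shrink $T$ to an open neighborhood $U$ of $\eta$ over which the resolution spreads to a morphism $f_U:\mathcal{Y}_U\to\mathcal{X}_U$ whose formation commutes with restriction to every closed fiber, and on which $m(K_{\mathcal{X}_U/U}+\mathcal{D})$ remains Cartier (possible by Step 2). The discrepancies along prime divisors on $\mathcal{Y}_t$ then agree with those on $\mathcal{Y}_\eta$, and Step 2 combined with Lemma \ref{single resolution} make them all $\ge -1$, giving lc on every closed fiber in $U$.

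The main obstacle is the coherent spreading-out of Step 3, where the relative log resolution, the $\Q$-Cartierness of $K_{\mathcal{X}_U/U}+\mathcal{D}$, and the fiberwise matching of discrepancies must all be arranged on one open neighborhood of $\eta$. The uncountability trick in Step 2, though routine, is the reason part $(1)$ needs $k$ uncountable, while part $(2)$ bypasses it via Lemma \ref{2-dim val lc}.
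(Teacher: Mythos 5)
Your proposal follows essentially the same route as the paper: reduce to the generic fiber via Corollary \ref{slc proper}, use Remark \ref{remark on val lc} (respectively Lemma \ref{2-dim val lc}) to upgrade valuatively lc to lc there, and spread out to a general closed fiber. The only differences are that the paper outsources your Step~2 openness/uncountability argument for the log $\Q$-Gorensteinness of the generic fiber to a citation of \cite[Remark 2.15]{ST}, and leaves your Step~3 spreading-out implicit in the opening sentence ``it suffices to show that the generic fiber is lc.''
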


\begin{proof}
In both cases, it suffices to show that the generic fiber $(X_{\eta}, D_{\eta}, (\ba \sO_{X_\eta})^{\lambda})$ is lc. 
In (1), since  $K_{X_{\eta}}+D_{\eta}$ is $\Q$-Cartier by \cite[Remark 2.15]{ST},  it follows from Corollary \ref{slc proper} and Remark \ref{remark on val lc} that $(X_{\eta}, D_{\eta}, (\ba \sO_{X_\eta})^{\lambda})$ is lc. 
In (2), we deduce from Corollary \ref{slc proper} that $(X_{\eta}, D_{\eta}, (\ba \sO_{X_\eta})^{\lambda})$ is two-dimensional valuatively lc, which implies by Lemma \ref{2-dim val lc} that $(X_{\eta}, D_{\eta}, (\ba \sO_{X_\eta})^{\lambda})$ is lc. 
\end{proof}

\begin{rem}
Using plurigenera defined for normal isolated singularities, Ishii \cite{ish} proved the isolated singularities case of 
Corollary \ref{general lc fibers} (1). 
She also showed (the no boundary case of) Corollary \ref{general lc fibers} (2), combining results of \cite{ish} and  \cite{ish2}. 
Thus, Corollary \ref{general lc fibers} gives a generalization and an alternative proof of her results. 
\end{rem}

\appendix

\section{Some background material on AC divisors}\label{appendix}

\subsection{Notation}\label{AC Notation}
Throughout this subsection, we assume that $X$ is an excellent reduced scheme satisfying the $(S_2)$-condition.
Let $\mathcal{K}_X$ denote the sheaf of total quotients of $X$.

First we recall the definition of AC divisors.
The reader is referred to \cite[Subsection 2.1]{MS} and \cite[Section 16]{Kol92} for more details.

\begin{defn}
An \emph{AC divisor} (or \emph{almost Cartier divisor}) is a coherent submodule $\mathcal{F} \subseteq \mathcal{K}_X$ satisfying the following two conditions:
\begin{enumerate}
\item $\mathcal{F}$  satisfies $(S_2)$ and
\item $\mathcal{F}_x$ is an invertible $\sO_{X,x}$-module for each point $x \in X$ of codimension$\le 1$.
\end{enumerate}
\end{defn}

AC divisors form an additive group via tensor product up to $S_2$-ification (\cite[Section 5.10]{EGA IV-2}), which is denoted by $\WSh(X)$.
Let $D$ denote an AC divisor $\mathcal{F} \subseteq \mathcal{K}_X$.
We say that $D$ is \emph{effective} if $\sO_X \subseteq \mathcal{F}$.  
We also say that $D$ is \emph{Cartier} at a point $x \in X$ if $\mathcal{F}$ is invertible at $x$, and that $D$ is Cartier if $D$ is Cartier at all points of $X$. 
Note that the set of all Cartier AC divisors coincides with the image of the injective group homomorphism
\[
\CDiv(X) \hookrightarrow \WSh(X); \ E \mapsto \sO_X(E),
\]
where $\CDiv(X) = H^0(X, \mathcal{K}_X^{*}/\sO_{X}^{*})$ is the set of all Cartier divisors.
We say that two AC divisors $D_1$ and $D_2$ are \emph{linearly equivalent} if $D_1-D_2$ is contained in the image of $\mathrm{Pr}(X) \subseteq \CDiv(X) \to \WSh(X)$, where $\mathrm{Pr}(X)$ denotes the set of all principal divisors.

By a \emph{$\Q$-AC divisor}, we mean an element of 
\[
\WSh_{\Q}(X) : = \WSh(X) \otimes_\Z \Q.
\]
We say that a $\Q$-AC divisor $\Delta \in \WSh_{\Q}(X)$ is \emph{effective} (resp.~\emph{$\Q$-Cartier} at a point $x \in X$, \emph{$\Q$-Cartier}) if $\Delta = D \otimes \lambda$ for some effective (resp.~Cartier at $x$, Cartier) AC divisor $D \in \WSh(X)$ and some nonnegative rational number $\lambda$.

The \emph{support} of an AC divisor $\mathcal{F} \subseteq \mathcal{K}_X$ is the closed subset consisting of all points $x \in X$ such that $\mathcal{F}_x \neq \sO_{X,x}$ as a submodule of $\mathcal{K}_{X,x}$.

\begin{lem}\label{AC support}
The support of an AC divisor is of pure codimension one if it is not empty.
\end{lem}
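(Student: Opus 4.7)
The plan is to localize at a generic point of the support and then invoke the reflexivity criterion of Lemma \ref{S2}. First I would rule out codimension zero: at any generic point $\eta$ of $X$ the local ring $\sO_{X,\eta}$ is a field, coinciding with $\mathcal{K}_{X,\eta}$, so the nonzero invertible submodule $\mathcal{F}_\eta \subseteq \mathcal{K}_{X,\eta}$ must equal $\sO_{X,\eta}$; hence every point of $\mathrm{Supp}(\mathcal{F})$ has codimension at least one. Now let $\eta$ be the generic point of an irreducible component of $\mathrm{Supp}(\mathcal{F})$ and suppose for contradiction that $d := \mathrm{codim}(\eta,X) \ge 2$. Set $R := \sO_{X,\eta}$ with maximal ideal $\m$ and $M := \mathcal{F}_\eta$, viewed as an $R$-submodule of the total quotient ring $\mathcal{K}_R$. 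By hypothesis $M \neq R$; on the other hand every prime $\p \subsetneq \m$ of $R$ corresponds to a proper generalization of $\eta$ in $X$, which by the choice of $\eta$ lies outside the support, so $M_\p = R_\p$ as submodules of $\mathcal{K}_R$ for all such $\p$. Consequently $\widetilde{M}|_U = \widetilde{R}|_U$ as subsheaves of $\widetilde{\mathcal{K}_R}|_U$, where $U := \Spec R \setminus \{\m\}$.

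The module $M$ is $(S_2)$ by the AC-divisor definition and it is invertible (hence reflexive) at every height $\le 1$ prime; the same clearly holds for $R$. Since the complement of $U$ in $\Spec R$ has codimension $d \ge 2$, Lemma \ref{S2} (applied to both $\widetilde{M}$ and $\widetilde{R}$) yields $\widetilde{M} \cong i_*(\widetilde{M}|_U)$ and $\widetilde{R} \cong i_*(\widetilde{R}|_U)$, where $i \colon U \hookrightarrow \Spec R$ is the open immersion. Pushing forward the equality on $U$ gives $M = R$, the required contradiction.

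The only delicate point is the bookkeeping of embeddings: one must interpret the stalk equalities $M_\p = R_\p$ as an identity of subsheaves of a common ambient sheaf, so that the two pushforwards supplied by Lemma \ref{S2} can be directly compared; one should also verify that $\Spec R$ inherits the reduced, Noetherian, $(S_2)$ hypotheses of that lemma from $X$, which is automatic since $X$ is excellent, reduced, and $(S_2)$.
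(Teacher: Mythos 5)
Your proof is correct and uses essentially the same idea as the paper's: assume there is a component of the support of codimension $\ge 2$, remove it (you do this after localizing at its generic point, the paper after shrinking $X$ to an open set where the support equals that component), and then invoke Lemma \ref{S2} to push forward from the complement and conclude $\mathcal{F} = \sO_X$, a contradiction. The only cosmetic difference is that you work at the stalk level and also spell out the codimension-zero case, which the paper leaves implicit.
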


\begin{proof}
Let $D$ denote an AC divisor $\mathcal{F} \subseteq \mathcal{K}_X$ whose support is not empty. 
Assume to the contrary that there exists an irreducible component $Z$ of the support $\Supp D$ of $D$ with codimension$\ge 2$.
After shrinking $X$, we may assume that $\Supp D=Z$.
Let $i : U \hookrightarrow X$ be an open immersion from $U : = X \setminus Z$, and then it follows from Lemma \ref{S2} that $\mathcal{F} = i_* i^* \mathcal{F} = i_* \sO_{U} = \sO_X$. 
This is a contradiction to the assumption that $\Supp D \neq \emptyset$.
\end{proof}

For a Weil divisor $E$ contained in $\WDiv^*(X)$, since the submodule $\sO_X(E) \subseteq \mathcal{K}_X$ is an AC divisor, we obtain the injective group homomorphism
\[
\WDiv^*(X) \to \WSh(X); \ E \mapsto \sO_X(E). 
\]
Its image is the subgroup $\WSh^*(X)$ of $\WSh(X)$ consisting of all AC divisors whose supports contain no codimension one singular points of $X$.
The situation is summarized in the following commutative diagram, which is cartesian.
\[
\xymatrix{
\CDiv(X) \ar@{^{(}->}[rr] & & \WSh(X) \\
\CDiv^*(X) \ar@{^{(}->}[u] \ar@{^{(}->}[r] & \WDiv^*(X) \ar^-{\sim}[r] & \WSh^*(X) \ar@{^{(}->}[u]
}\]
Since $\WSh^*(X)$ is a free $\Z$-module, the natural map 
\[
\WSh^*(X) \to \WSh^*_{\Q}(X) : = \WSh^*(X) \otimes_\Z \Q \subseteq \WSh_\Q (X)
\] 
is injective.
Let $\Delta$ be a $\Q$-AC divisor contained in $\WSh^*_{\Q}(X)$.
Then there exists an integer $m \ge 1$ such that $m\Delta$ is integral, that is, $m \Delta \in \WSh^*(X)$.
We define the support $\Supp \Delta$ of $\Delta$ as $\Supp m \Delta$.
This is independent of the choice of $m$ by the following lemma.

\begin{lem}\label{AC support 2}
The support of a Weil divisor $E \in \WDiv^*(X)$ coincides with that of the AC divisor $\sO_X(E)$.
In particular, for every AC divisor $D \in \WSh^*(X)$ and every integer $n \ge 1$, we have $\Supp(D)=\Supp(nD)$.
\end{lem}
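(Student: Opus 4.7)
The plan is to compare the two supports at codimension-one points and then use a purity argument. First I would note that both supports are closed subsets of $X$: the support of $E$ is pure of codimension one by definition, and Lemma \ref{AC support} guarantees that the support of the AC divisor $\sO_X(E)$ is pure of codimension one (or empty). So it will suffice to verify that the two closed sets contain the same codimension-one points of $X$.

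Next, I would fix a codimension-one point $x \in X$ and compare membership in the two supports. Since $E \in \WDiv^*(X)$, no prime component of $E$ passes through a codimension-one singular point of $X$, so if $x$ is such a singular point then there is an open neighborhood $U$ of $x$ on which $E$ restricts to zero; hence $\sO_X(E)_x = \sO_{X,x}$ and $x$ lies in neither support. Otherwise $x$ is a regular codimension-one point, $\sO_{X,x}$ is a discrete valuation ring, and $\sO_X(E)_x$ is the fractional ideal generated by $t^{-\ord_x(E)}$ for any uniformizer $t$. This equals $\sO_{X,x}$ exactly when $\ord_x(E)=0$, which is precisely the condition that $x\notin\Supp(E)$. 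Combined with the purity step above, this yields $\Supp(E)=\Supp(\sO_X(E))$.

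For the ``in particular'' assertion, I would write $D = \sO_X(E)$ for the unique $E \in \WDiv^*(X)$ corresponding to $D$ under the isomorphism $\WDiv^*(X) \xrightarrow{\sim} \WSh^*(X)$ displayed just before the statement. Because that isomorphism is a group homomorphism, $nD$ corresponds to $nE$, so applying the first part of the lemma to both $E$ and $nE$ gives $\Supp(nD) = \Supp(nE) = \Supp(E) = \Supp(D)$, where the middle equality is immediate from $n \ge 1$ at the level of Weil divisors. I do not anticipate a serious obstacle here; the only point requiring genuine care is the treatment of codimension-one singular points of $X$, which is precisely what the definition of $\WDiv^*(X)$ is designed to sidestep.
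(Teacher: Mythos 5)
Your proof is correct and takes essentially the same route as the paper's: the paper's one-line proof simply cites Lemma \ref{AC support} (purity of the support of an AC divisor), which is exactly the step you invoke before reducing the comparison of the two closed sets to a check at codimension-one points.
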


\begin{proof}
It immediately follows from Lemma \ref{AC support}.
\end{proof}

\begin{rem}
There is an example of $X$ such that the natural map
\[
\WSh(X) \to \WSh_{\Q}(X)
\]
is not injective (see \cite[(16.1.2)]{Kol92}).

We also have an example of an AC divisor $D$ such that $\Supp D \neq \Supp n D$ for an integer $n \ge 1$.  
If we set $X : = \Spec \C[x,y]/(y^2 - x^2 +x^3 )$ and $D:= (y/x) \sO_X$, then the origin $(0, 0) \in X$ is contained in the support of $D$ but not in that of $2D=(x-1) \sO_X$. 
\end{rem}

\subsection{Differents of AC divisors}\label{Subsection AC-Diff}

In this subsection, we recall the definition of the different of a $\Q$-AC divisor and prove some basic results used in subsection \ref{Subsection Diff}.

Throughout this subsection, we fix an excellent scheme $S$ admitting a dualizing complex $\omega_S^{\bullet}$, every scheme is assumed to be separated and of finite type over $S$ and every morphism is assumed to be an $S$-morphism.
Moreover, given a scheme $X$, we always choose $\omega_X^{\bullet} : = \pi_X^{!} \omega_S^{\bullet}$ as a 
a dualizing complex of $X$, where $\pi_X: X \to S$ is the structure morphism, and $\omega_X$ always denotes the canonical sheaf associated to $\omega_X^{\bullet}$.
The trace map of a finite surjective morphism $f: Y \to X$
is denoted by $\mathrm{Tr}_f: f_* \omega_Y \to \omega_X$ (see Subsection \ref{duality} for its definition). 

\begin{lem}\label{Tr-transpose}
Let $f: Y \to X$ be a finite birational morphism of reduced schemes.
Then the following hold.
\begin{enumerate}[label=$(\arabic*)$]
\item For a morphism $\alpha_X: \omega_X \to \mathcal{K}_X$, there exists a unique morphism $\alpha_Y : \omega_Y \to \mathcal{K}_Y$ such that the following diagram commutes.
\begin{equation}\label{Tr-transpose diagram}
\vcenter{
\xymatrix{
f_* \omega_Y \ar^-{\alpha_Y}[r] \ar_-{\mathrm{Tr}_f}[d] & f_* \mathcal{K}_Y \\
\omega_X \ar_-{\alpha_X}[r] & \mathcal{K}_X \ar_-{\rotatebox{90}{$\sim$}}^-{\theta_f}[u]
}},
\end{equation}
where $\theta_f: \mathcal{K}_X \xrightarrow{\sim} f_* \mathcal{K}_Y$ is the canonical isomorphism.
\item For a morphism $\alpha_Y : \omega_Y \to \mathcal{K}_Y$, there exists a unique morphism $\alpha_X : \omega_X \to \mathcal{K}_X$ such that the diagram \eqref{Tr-transpose diagram} commutes.
\end{enumerate}
\end{lem}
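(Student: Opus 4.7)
My strategy is to produce a natural bijection between $\Hom_{\sO_X}(\omega_X, \mathcal{K}_X)$ and $\Hom_{\sO_Y}(\omega_Y, \mathcal{K}_Y)$ under which $\alpha_X$ corresponds to $\alpha_Y$ precisely when the displayed diagram commutes; both (1) and (2) follow at once. The crucial input is that, because $f$ is finite birational with $X$ and $Y$ reduced, the generic stalks of $Y$ map isomorphically onto those of $X$, so $\theta_f\colon \mathcal{K}_X \xrightarrow{\sim} f_*\mathcal{K}_Y$ is an isomorphism, and tensoring $\mathrm{Tr}_f\colon f_*\omega_Y \to \omega_X$ with $\mathcal{K}_X$ over $\sO_X$ produces an isomorphism
\[
f_*\omega_Y \otimes_{\sO_X} \mathcal{K}_X \xrightarrow{\;\sim\;} \omega_X \otimes_{\sO_X} \mathcal{K}_X.
\]

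Reducing to the affine setting, I would write $X = \Spec A$ and $Y = \Spec B$ with $A \to B$ finite birational and common total quotient ring $K$. Since $f$ is finite, $\omega_B = \Hom_A(B, \omega_A)$ and $\mathrm{Tr}_f$ is evaluation at $1 \in B$. As canonical modules over reduced Noetherian rings admitting dualizing complexes, both $\omega_A$ and $\omega_B$ are torsion-free and hence embed into $\omega_A \otimes_A K$ and $\omega_B \otimes_B K$, respectively. By extension of scalars, any $A$-linear $\alpha_A\colon \omega_A \to K$ extends uniquely to a $K$-linear $\tilde\alpha_A\colon \omega_A \otimes_A K \to K$, and likewise over $B$. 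Given $\alpha_A$, I define $\alpha_B$ to be the restriction of $\tilde\alpha_A \circ (\mathrm{Tr}_f \otimes_A K)$ to $\omega_B$; this is $B$-linear as the restriction of a $K$-linear map, and it satisfies $\alpha_B = \alpha_A \circ \mathrm{Tr}_f$ by construction, which is exactly the commutativity of the diagram after identifying $\mathcal{K}_X$ with $f_*\mathcal{K}_Y$ via $\theta_f$. Uniqueness follows from torsion-freeness of $\omega_B$ together with the fact that $\mathrm{Tr}_f \otimes_A K$ is an isomorphism, and (2) is obtained by running the same argument in reverse using the inverse of $\mathrm{Tr}_f \otimes_A K$.

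The main technical point is thus the verification that $\mathrm{Tr}_f$ becomes an isomorphism after tensoring with $\mathcal{K}_X$; this reduces at each generic point to the essentially trivial observation that the trace of a finite birational map of fields is the identity. A secondary bookkeeping step is checking that the local constructions glue, which is immediate from the functoriality of $\mathrm{Tr}_f$ and $\theta_f$. All remaining steps are formal manipulations with the $\Hom$-$\otimes$ adjunction and torsion-freeness of the canonical modules.
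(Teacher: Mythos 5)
Your argument is correct, but it takes a somewhat different technical route than the paper's. The paper reduces to an open subscheme $U\subseteq X$ containing all generic points over which $f$ is an isomorphism: since $i_*\mathcal{K}_U=\mathcal{K}_X$, the adjunction $\Hom_X(\omega_X,i_*\mathcal{K}_U)\cong\Hom_U(\omega_U,\mathcal{K}_U)$ shows the Hom groups on $X$ and on $U$ agree (and likewise over $Y$ and $V=f^{-1}(U)$); combined with the compatibility of the trace map with open base change (diagram~\eqref{Trace open diagram}), this replaces $f$ by an isomorphism, where everything is trivial. Your proof replaces the open set $U$ by the total quotient ring $\mathcal{K}_X$ (its direct limit) and works out the adjoint explicitly in the affine picture $\omega_B=\Hom_A(B,\omega_A)$, using that $\mathrm{Tr}_f\otimes_A K$ is an isomorphism. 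Conceptually the two are the same — both exploit that the trace is generically an isomorphism — but the paper's version piggybacks on the pre-established compatibility diagram and avoids touching the structure of $\omega$ as a canonical module, while yours is more self-contained and concrete.

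Two small points to tighten. First, you assert that $\omega_A$ and $\omega_B$ are torsion-free; this is true (the canonical module of a scheme with dualizing complex satisfies $(S_1)$, so its associated primes are among the minimal primes), but it deserves a citation or a one-line justification. Second, and more to the point, the torsion-freeness of the canonical modules is not actually what your argument needs: the existence step never uses that $\omega_B\to\omega_B\otimes_BK$ is injective (you only need the natural map), and the uniqueness in~(2) uses that $\mathcal{K}_X$ is torsion-free over $\sO_X$ (any $\sO_X$-linear map from a torsion module to $\mathcal{K}_X$ vanishes), which is automatic. Clarifying this would make the role of each hypothesis cleaner.
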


\begin{proof}
Take an open subscheme $i: U \hookrightarrow X$ containing all generic points of $X$ such that $V: = f^{-1}(U) \to U$ is an isomorphism. 
Since $i_* \mathcal{K}_U = \mathcal{K}_X$, we have
\[
\Hom_X(\omega_X, \mathcal{K}_X) \cong \Hom_U(\omega_U , \mathcal{K}_U).
\]
Similarly, 
\[
\Hom_Y(\omega_Y, \mathcal{K}_Y) \cong \Hom_{V}(\omega_{V} , \mathcal{K}_{V}).
\]
Combining these isomorphisms with the commutative diagram \eqref{Trace open diagram} in Subsection \ref{duality}, after replacing $X$ by $U$, we may assume that $f$ is an isomorphism.
In this case, the assertion is obvious because $\mathrm{Tr}_f$ is an isomorphism.
\end{proof}

Let $\omega_X^{\bullet}$ be a dualizing complex of $X$ and $\omega_X$ be the canonical sheaf associated to $\omega_X^{\bullet}$. 
A \emph{canonical AC divisor} on $X$ associated to $\omega_X^{\bullet}$ is an AC divisor $\mathcal{F} \subseteq \mathcal{K}_X$ such that $\mathcal{F} \cong \omega_X$ as $\sO_X$-modules.
The reader is referred to Lemma \ref{canonical AC divisor2} below for sufficient conditions for $X$ to admit a canonical AC divisor.

\begin{setting}\label{setup for AC-Diff}
Let $\mathcal{A}: = (Y, W, W', i, \mu, f, \Gamma)$ be a tuple satisfying the following conditions. 
\begin{enumerate}
\item $Y$ is an excellent reduced $(S_2)$ and $(G_1)$ scheme admitting a canonical AC divisor associated to $\omega_Y^{\bullet} := \pi_Y^! \omega_S^{\bullet}$, where $\pi_Y:Y \to S$ is the structure morphism. 
\item $i: W \hookrightarrow Y$ is the closed immersion from a reduced closed subscheme $W$ whose generic points are codimension one regular points of $Y$. 
\item $\mu: W' \to W$ is a finite birational morphism from a reduced $(S_2)$ and $(G_1)$ scheme $W'$ and $f: = i \circ \mu : W' \to Y$ is the composite of $i$ and $\mu$. 
\[
\xymatrix{
Y & \\
W \ar^-{i}[u] & W' \ar^-{\mu}[l] \ar_-{f}[ul]
}\]
\item $\Gamma \in \WSh^*_{\Q}(Y)$ is a $\Q$-AC divisor on $Y$ such that the support of $\Gamma$ has no common components with $W$ and the $\Q$-AC divisor $K_Y+ \Gamma+W$ is $\Q$-Cartier at every codimension one point $w$ of $W$.
\end{enumerate}
\end{setting}

Suppose that $\mathcal{A} : = (Y,W, W', i, \mu, f, \Gamma)$ is a tuple as in Setting \ref{setup for AC-Diff}.
Since $Y$ admits a canonical AC divisor $K_Y$, we have an inclusion $\alpha_Y: \omega_Y \hookrightarrow \mathcal{K}_Y$ whose image coincides with $K_Y$. 
By Lemma \ref{canonical AC divisor2} (i), $W'$ also admits a canonical AC divisor $K_{W'}$ and let $\alpha_{W'}: \omega_{W'} \hookrightarrow \mathcal{K}_{W'}$ be the corresponding inclusion. 
Take an integer $m \ge 1$ such that $m \Gamma \in \WSh^*(X)$ and $m(K_Y+W)+m \Gamma$ is Cartier at any codimension one points of $W$, and let $\mathcal{F} :=\sO_Y(m(K_Y+W)+m\Gamma) \subseteq \mathcal{K}_{Y}$. 
We will define the morphism
\[
\beta_{\mathcal{A}}(\alpha_Y, \alpha_{W'},m ): f^* \mathcal{F} \to \mathcal{K}_{W'}.
\]

Take an open subscheme $V \subseteq Y$ such that $V$ is regular, $\Gamma|_V=0$, $U : = W \cap V$ is regular and $U$ contains all generic points of $W$.
Let $u$ and $v$ be natural open immersions such that the following diagram commutes:
\[
\xymatrix{
W \ar@{^{(}->}[r]^-{i} & Y \\
U \ar@{^{(}->}[u]^-{u} \ar[r] & V. \ar@{^{(}->}[u]^-{v}
}\]
Then we define the morphism $\beta_{\mathcal{A}}^V(\alpha_Y, \alpha_{W'}, m ) : \mathcal{F}|_U \to \mathcal{K}_U$ as 
\[
\mathcal{F}|_U =\sO_V(m (K_Y|_V +U))|_U \xrightarrow{(\gamma_j^m)^{-1} \circ (\alpha_Y^m)^{-1}} (\omega_V(U)|_U)^{\otimes m} \xrightarrow{\mathrm{Res}_{V/U}^m} \omega_U^m \xrightarrow{\alpha_W^m \circ \gamma_i^m} \mathcal{K}_U,
\]
where $\mathrm{Res}_{V/U}$, $\gamma_i$ and $\gamma_j$ are isomorphisms defined in Subsection \ref{duality} and $\alpha_W : \omega_W \to \mathcal{K}_W$ is the morphism induced by $\alpha_{W'}$ as in Lemma \ref{Tr-transpose}. 
Pulling back this morphism to $U' : = f^{-1}(U) \subseteq W'$ and taking the $(u')^*$-$(u')_*$ adjoint, where $u' : U' \hookrightarrow W'$ is the open immersion, we obtain a morphism $f^* \mathcal{F} \to \mathcal{K}_{W'}$.
Since this morphism is independent of the choice of $V$ by the diagrams \eqref{open composition diagram} and \eqref{Res open} in Subsection \ref{duality}, we write this morphism by 
\[
\beta_{\mathcal{A}}(\alpha_Y, \alpha_{W'}, m): f^* \mathcal{F} \to \mathcal{K}_{W'}.
\]

Let $E =E_{\mathcal{A}}(\alpha_Y, \alpha_{W'}, m) \in \WSh(W')$ denote the AC divisor defined by the reflexive hull of the image of $\beta_{\mathcal{A}}(\alpha_Y, \alpha_{W'}, m)$.
Then the \emph{different} of $\Gamma$ on $W'$ is defined as 
\[
\ACDiff_{W'}(\Gamma) : = (E - m K_{W'}) \otimes_{\Z} \frac{1}{m} \in \WSh_\Q(W'). 
\]

\begin{lem}\label{basic for AC-Diff}
With the above notation, the following holds.
\begin{enumerate}[label=$(\arabic*)$]
\item The $\Q$-AC divisor $\ACDiff_{W'}(\Gamma)$ is independent of the choice of $\alpha_Y$, $\alpha_{W'}$ and $m$.
\item Taking differents is compatible with open immersions, that is, for an open subscheme $Y^{\circ} \subseteq Y$, we have 
\[
\ACDiff_{(W')^{\circ}}(\Gamma|_{Y^{\circ}})=\ACDiff_{W'}(\Gamma)|_{(W')^{\circ}},
\]
where $(W')^{\circ} \subseteq W'$ is the pullback of $Y^{\circ}$ to $W'$.
\item If $D \in \CDiv_{\Q}^*(Y)$ is a $\Q$-Cartier divisor on $Y$ whose support does not contain any generic points of $W$, then
\[
\ACDiff_{W'}(\Gamma+\widetilde{D}) = \ACDiff_{W'}(\Gamma ) + \widetilde{f^*D},
\]
where $\widetilde{D}$ and $\widetilde{f^*D}$ are the $\Q$-AC divisors corresponding to the $\Q$-Cartier divisors $D$ and $f^*D$, respectively.
\end{enumerate}
\end{lem}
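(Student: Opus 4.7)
My plan is to unwind the definition of $\beta_{\mathcal{A}}(\alpha_Y, \alpha_{W'}, m)$ explicitly and verify each assertion by directly tracking how the ingredients transform. For part (1), I would verify invariance under each of the three choices separately. First, if $\alpha_Y$ is replaced by $u\alpha_Y$ for some $u \in H^0(Y, \mathcal{K}_Y^*)$, then the canonical AC divisor $K_Y$ changes by $-\Div(u)$, so the submodule $\mathcal{F} \subseteq \mathcal{K}_Y$ becomes $u^{-m}\mathcal{F}$; at the same time the factor $\alpha_Y^{-m}$ in the definition of $\beta_{\mathcal{A}}$ acquires a compensating scalar $u^m$. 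These effects cancel, so the image of $\beta_{\mathcal{A}}$ in $\mathcal{K}_{W'}$ is unchanged and hence so is $E$. Second, if $\alpha_{W'}$ is replaced by $v\alpha_{W'}$ for some $v \in H^0(W', \mathcal{K}_{W'}^*)$, then $K_{W'}$ changes by $-\Div(v)$, and by Lemma \ref{Tr-transpose} the induced morphism $\alpha_W$ is scaled by $v$ (via $\mathcal{K}_W \cong \mu_* \mathcal{K}_{W'}$). So the image of $\beta_{\mathcal{A}}$ is multiplied by $v^m$, $E$ becomes $E - m\Div(v)$, and the combination $E - m K_{W'}$ is invariant. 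Third, replacing $m$ by $\ell m$ for an integer $\ell \ge 1$ scales everything compatibly: $\beta_{\mathcal{A}}(\alpha_Y, \alpha_{W'}, \ell m)$ is the $\ell$-th tensor power of $\beta_{\mathcal{A}}(\alpha_Y, \alpha_{W'}, m)$, so $E$ is replaced by $\ell E$ and the ratio $(E - m K_{W'}) \otimes (1/m)$ is preserved.

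Part (2) is essentially formal: the construction of $\beta_{\mathcal{A}}$ uses an auxiliary open $V \subseteq Y$, which for a given open $Y^{\circ} \subseteq Y$ can be chosen inside $Y^{\circ}$; all the ingredients (canonical AC divisors, the residue isomorphism $\mathrm{Res}_{V/U}$, trace maps, and the morphisms $\gamma_i$, $\gamma_j$, $\alpha_Y$, $\alpha_{W'}$) commute with restriction, so $\beta_{\mathcal{A}}$ restricts to the analogous map for the restricted tuple, and both $E$ and $K_{W'}$ restrict, proving the claim. For part (3), after enlarging $m$ we may assume $mD$ is an integral Cartier divisor whose support avoids the generic points and codimension-one singular points of $W$. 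Writing $mD = \Div(g)$ locally with $g \in \mathcal{K}_Y^*$, the submodule $\mathcal{F}_{\mathcal{A}'} \subseteq \mathcal{K}_Y$ for the tuple $\mathcal{A}' = (Y, W, W', i, \mu, f, \Gamma + \widetilde{D})$ equals $g^{-1}\mathcal{F}_{\mathcal{A}}$. Because every morphism in the definition of $\beta$ is $\mathcal{O}$-linear and compatible with multiplication by units in $\mathcal{K}^*$, $\beta_{\mathcal{A}'}$ corresponds to $\beta_{\mathcal{A}}$ post-composed with multiplication by $f^*(g^{-1})$. Hence $\mathrm{Im}(\beta_{\mathcal{A}'}) = f^*(g^{-1}) \cdot \mathrm{Im}(\beta_{\mathcal{A}})$ as submodules of $\mathcal{K}_{W'}$, and translating back to AC divisors yields $E_{\mathcal{A}'} = E_{\mathcal{A}} + m\,\widetilde{f^*D}$. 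Dividing by $m$ gives the stated formula.

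The main obstacle I anticipate is the bookkeeping in part (1). One must carefully distinguish the abstract sheaves $\omega_Y$, $\omega_{W'}$ from their images $K_Y \subseteq \mathcal{K}_Y$, $K_{W'} \subseteq \mathcal{K}_{W'}$ under the chosen inclusions, because $\beta_{\mathcal{A}}$ is built from these embeddings rather than from the abstract sheaves. A sign confusion, for instance about whether multiplication by $u \in H^0(Y, \mathcal{K}_Y^*)$ sends $\sO_Y(D)$ to $\sO_Y(D + \Div(u))$ or to $\sO_Y(D - \Div(u))$, would spoil the intended cancellation. Once the conventions are fixed consistently, all three invariance claims follow by a direct computation performed locally around codimension-one regular points of $W$, where all the AC divisors are Cartier.
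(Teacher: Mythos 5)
Your proof is correct and, for part (3) (the only part the paper treats in detail — it dismisses (1) and (2) as ``obvious''), follows essentially the same line as the paper: the paper encodes the generic agreement of $\beta_{\mathcal{A}'}$ and $\beta_{\mathcal{A}}$ in a commutative diagram via the natural inclusion $f^*\mathcal{F}_{\mathcal{A}}\hookrightarrow f^*\mathcal{F}_{\mathcal{A}}\otimes\sO_{W'}(mf^*D)$ after reducing to $D$ effective, while you express the same fact multiplicatively through a local trivialization $mD=\mathrm{div}(g)$, arriving at the same identity $E_{\mathcal{A}'}=E_{\mathcal{A}}+m\,\widetilde{f^*D}$. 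Two small blemishes: the submodule should transform by $u^{m}\mathcal{F}$ rather than $u^{-m}\mathcal{F}$ (though your conclusion that the image, and hence $E$, is unchanged is right), and enlarging $m$ cannot make the support of $mD$ avoid codimension-one singular points of $W$ — but that extra condition is not needed by your argument (nonvanishing of $g$ at the generic points of $W'$ suffices, and that is guaranteed by the hypothesis).
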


\begin{proof}
(1) and (2) are obvious.
For (3), after shrinking $Y$ if necessary, we can write $D=D_1-D_2$, where $D_1, D_2 \in \CDiv^*_{\Q}(Y)$ are effective $\Q$-Cartier divisors whose supports contain no generic points of $W$. 
Therefore, it suffices to show the assertion when $D$ is effective.

Take $m, \alpha_Y, \alpha_{W}$ and $K_{W}$ as in the discussion preceding this lemma. 
We further assume that $m D$ is Cartier.
When we write $\mathcal{A}' : = (Y, W, W', i, \mu, f, \Gamma + \widetilde{D})$, the following diagram 
\[
\xymatrix{
&\hspace*{3.5em} f^* \sO_Y(m (K_Y+ W) +m \Gamma) \ar@{_{(}->}_-{\textup{nat}}[ld]  \ar^-{\beta_{\mathcal{A}}}[dd]   \\
f^* \sO_Y(m (K_Y+ W) +m \Gamma) \otimes \sO_{W'}(m f^*(D)) \ar_-{\textup{\rotatebox{90}{$\sim$}}}[d] &  \\
f^* \sO_Y(m (K_Y+ W) +m (\Gamma+\widetilde{D})) \ar^-{\beta_{\mathcal{A'}}}[r] & \mathcal{K}_{W'} &
}
\]
commutes, because the morphism $\beta_{\mathcal{A'}}: =\beta_{\mathcal{A}'}(\alpha_Y,\alpha_{W'}, m)$ is generically the same as $\beta_{\mathcal{A}}: =\beta_{\mathcal{A}}(\alpha_Y,\alpha_{W'}, m)$.
Thus, 
\[
E_{\mathcal{A'}}(\alpha_Y,\alpha_{W'}, m) =E_{\mathcal{A}}(\alpha_Y,\alpha_{W'}, m) + m \widetilde{f^*D},
\]
which implies the desired result. 
\end{proof}

\begin{lem}\label{restriction of AC-divisor}
Suppose that $Y$ is a scheme satisfying the condition $(1)$ in Setting \ref{setup for AC-Diff} and $i: W \hookrightarrow Y$ be a closed immersion satisfying the condition $(2)$.  
We further assume that $W$ is a Cartier divisor (that is, $W \in \CDiv^*(Y)$) satisfying $(S_2)$ and $(G_1)$. 
Take a $\Q$-Weil divisor $\Delta \in \WDiv^*_{\Q}(Y)$ whose support contains neither any generic points of $W$ nor any singular codimension one points of $W$, and let $\Gamma \in \WSh^*_{\Q}(Y)$ be the corresponding $\Q$-AC divisor. 

\begin{enumerate}[label=$(\arabic*)$]
\item The tuple $(Y, W, W, i, \mathrm{id}_{W},i, \Gamma)$ satisfies all the conditions in Setting \ref{setup for AC-Diff}.
\item Let $\Delta|_W \in \WDiv_\Q^*(W)$ be the restriction of $\Delta$ to the Cartier divisor $W$ (see Lemma \ref{restriction of divisor} for the definition).
Then the $\Q$-Weil divisor $\Delta|_W \in \WDiv_{\Q}^*(W)$ corresponds to the $\Q$-AC divisor $\ACDiff_W(\Delta) \in \WSh_{\Q}(W)$.
\end{enumerate}
\end{lem}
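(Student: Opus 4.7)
The proof will break into two steps, corresponding to the two parts of the statement, and both reduce to local analysis at codimension one points of $W$.

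For \textup{(1)}, conditions (1), (2), (3) of Setting \ref{setup for AC-Diff} are either direct hypotheses or trivially verified with $W' = W$ and $\mu = \mathrm{id}_W$, using the $(S_2)$ and $(G_1)$ assumptions on $W$. The content is condition (4): that $K_Y + \Gamma + W$ is $\Q$-Cartier at every codimension one point $w$ of $W$. At a generic point of $W$, $w$ has codimension one in $Y$, so the $(G_1)$ condition on $Y$ makes $K_Y$ Cartier at $w$, $W$ is Cartier by hypothesis, and $\Gamma$ is trivial there because $\Supp \Delta$ has no common components with $W$. At a codimension one singular point $w$ of $W$, the hypothesis on $\Delta$ yields $\Gamma = 0$ near $w$, so it suffices to check that $K_Y$ is Cartier at $w$. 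Since $W$ is Cartier at $w$, cut out by a non-zerodivisor $h \in \sO_{Y,w}$, the $(S_2)$ condition on $W$ combined with $\depth \sO_{W,w} = \depth \sO_{Y,w} - 1$ forces $\sO_{Y,w}$ to be Cohen--Macaulay; the $(G_1)$ condition makes $\sO_{W,w}$ Gorenstein; and Gorensteinness ascends along the regular element $h$ to give $\sO_{Y,w}$ Gorenstein. Thus $K_Y$, and hence $K_Y + W$, is Cartier at $w$.

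For \textup{(2)}, I plan to unwind the definition of $E = E_{\mathcal{A}}(\alpha_Y, \alpha_W, m)$ directly. Choose $m \ge 1$ so that $m\Delta$ is an integral Weil divisor, and fix inclusions $\alpha_Y : \omega_Y \hookrightarrow \mathcal{K}_Y$ and $\alpha_W : \omega_W \hookrightarrow \mathcal{K}_W$ realizing $K_Y$ and $K_W$. Set $\mathcal{F} = \sO_Y(m(K_Y + W) + m\Gamma) \subseteq \mathcal{K}_Y$. At a codimension one point $w$ of $W$, choose local generators: $\sigma$ of $\sO_Y(m(K_Y + W))$ at $w$, which is invertible there by part (1), and $\tau$ of $\sO_Y(m\Gamma)$ at $w$, invertible because $w \notin \Supp \Gamma$. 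Then $\mathcal{F}_w$ is generated by $\sigma \cdot \tau$. On the auxiliary open $V$ of the construction, where $\Gamma|_V = 0$ and both $V$ and $U := V \cap W$ are regular, the residue map $\mathrm{Res}_{V/U}$ is the classical Cartier adjunction isomorphism $\omega_V(U)|_U \xrightarrow{\sim} \omega_U$, so the $\sO_W$-linear map $\beta_{\mathcal{A}}(\alpha_Y, \alpha_W, m)$ sends $\sigma$ to a generator of $\sO_W(mK_W)$ at $w$ (via $(K_Y + W)|_W = K_W$). Multiplying by $\tau|_W$, which generates the AC-divisor restriction of $\sO_Y(m\Gamma)$ to $W$ at $w$, shows the image at $w$ represents the AC divisor $m K_W + m\,\widetilde{\Delta|_W}$; here the hypotheses that $\Supp \Delta$ avoids both the generic points and the codimension one singular points of $W$ guarantee that this restriction coincides with the Weil-divisor restriction $m\Delta|_W$ used in Lemma \ref{restriction of divisor}. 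Since $E$ is reflexive on the $(S_2)$ scheme $W$ and this identification holds at every codimension one point of $W$, we conclude $E = mK_W + m\,\widetilde{\Delta|_W}$, hence $\ACDiff_W(\Delta) = \tfrac{1}{m}(E - m K_W) = \widetilde{\Delta|_W}$.

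The main obstacle is the local computation in Part (2), specifically matching the abstract Poincar\'e residue $\mathrm{Res}_{V/U}$ from Subsection \ref{duality} with the classical Cartier adjunction isomorphism so that its image generates $\sO_W(mK_W)$, and then correctly tracking how $\sO_W$-linear multiplication by the generator $\tau$ of $\sO_Y(m\Gamma)$ translates into a Weil-divisor twist by $m\Delta|_W$. A secondary technicality is the ascent of Gorensteinness along a regular element used in Part (1); this is standard, but it is what makes the whole setup work and should be stated explicitly.
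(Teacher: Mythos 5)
Your overall plan — localize at codimension-one points of $W$, compute the image of $\beta_{\mathcal{A}}$ via a residue map twisted by a local Cartier equation for $m\Gamma$, and conclude by reflexivity on the $(S_2)$ scheme $W$ — is exactly the paper's strategy. However, there are two gaps, one minor and one substantive.

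In Part (1) you verify condition (4) of Setting \ref{setup for AC-Diff} at generic points of $W$ (which is not required: condition (4) concerns codimension-\emph{one} points of $W$, and the no-common-components clause is a separate hypothesis already given) and at codimension-one \emph{singular} points, but you silently omit the codimension-one \emph{regular} points of $W$. There $\Delta$ need not vanish near $w$, so your Gorenstein-ascent argument does not apply directly; the paper's argument instead notes that $W$ being Cartier and regular at $w$ forces $Y$ to be regular at $w$ (a regular quotient by a nonzerodivisor of a Noetherian local ring has regular source), so any $\Q$-Weil divisor is $\Q$-Cartier there. This is an easy fix but a genuine omission.

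The substantive gap is in Part (2). You assert that $\beta_{\mathcal{A}}$ sends a local generator $\sigma$ of $\sO_Y(m(K_Y+W))$ near $w$ to a generator of $\sO_W(mK_W)$ at $w$, justifying this because $\mathrm{Res}_{V/U}$ is an isomorphism on the regular open $U$. But a codimension-one point $w$ of $W$ typically lies outside $U$: the open $U$ in the construction of $\beta_{\mathcal{A}}$ is only required to contain the generic points of $W$. The morphism $\beta_{\mathcal{A}}$ near $w$ is the $(u')^*$--$(u')_*$ adjoint extension of its restriction to $U$, so knowing that restriction is an isomorphism gives no control whatsoever on the stalk of the image at $w$: the extended section $\beta_{\mathcal{A}}(\sigma\tau)_w$ is some element of $\mathcal{K}_{W,w}$, and you must show it generates the correct submodule. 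This is precisely what Lemma \ref{residue isom} supplies: for $Y$ Cohen--Macaulay and $W$ a reduced effective Cartier divisor, the Poincar\'e residue $\omega_Y(W)|_W \to \omega_W$ is not merely surjective (from the long exact sequence and $\mathcal{E}xt^1_X(\sO_X,\omega_X)=0$) but an isomorphism, since the source is $(S_1)$ and the map is generically an isomorphism. You identify this identification as ``the main obstacle'' in your last paragraph but do not overcome it; the paper's proof rests on exactly this lemma (applied after reducing to $\Delta=0$ via Lemma \ref{basic for AC-Diff} (3)). Until that step is supplied, the local identification of $E_{\mathcal{A}}$ at singular codimension-one points of $W$ is not established.
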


\begin{proof}
(1) It is enough to verify the condition (4) in Setting \ref{setup for AC-Diff}.
Let $w \in W$ be a codimension one point.
If $w$ is a singular point of $W$, then $K_Y+W+\Delta = K_Y+W$ around $w$, which is Cartier since $W$ is Cartier and satisfies $(G_1)$.
If $w$ is a regular point of $W$, then $Y$ is also regular at $w$ and in particular $K_Y+W+\Delta$ is $\Q$-Cartier at $w$.

(2) Take $m, \alpha_Y, \alpha_{W}$ and $K_{W}$ as in the discussion preceding Lemma \ref{basic for AC-Diff}. 
We will show that the AC-divisor $E_{\mathcal{A}}(\alpha_Y, \alpha_{W}, m)-m K_W \in \WSh(W)$ coincides with the Weil divisor $m \Delta|_W \in \WDiv^*(W)$.
By Lemma \ref{S2} (3), it is enough to show the assertion after shrinking $Y$ around an arbitrary codimension one point $w$ of $W$.

First, we consider the case where $w$ is a singular point of $W$.
After shrinking $Y$, we may assume that $\Delta=0$ and $Y$, $W$ are Gorenstein.
Since the Poincar\'{e} residue map $\mathrm{Res}_{Y/W}: \omega_Y(W)|_W \to \omega_W$ is isomorphic (Lemma \ref{residue isom} below), it induces the isomorphism
\[
\phi: (\sO_Y(m(K_Y+W))|_W) \cong (\omega_Y(W)|_W)^{m} \xrightarrow{(\mathrm{Res}_{Y/W})^m } (\omega_W)^m \cong \sO_W(m K_W) \subseteq \mathcal{K}_W.
\]
Since the Poincar\'{e} residue maps is compatible with open immersions (see the commutative diagram \eqref{Res open}), we have $\phi=\beta_{\mathcal{A}}(\alpha_Y, \alpha_W, m)$.
Therefore,  $E_{\mathcal{A}}(\alpha_Y,\alpha_W, m) =m K_W$. 

Next, we consider the case where $w$ is a regular point of $W$.
After shrinking $Y$, we may assume that $Y$ and $W$ are regular. 
Since $\Delta$ is $\Q$-Cartier, we can reduce to the case where $\Delta=0$ by applying Lemma \ref{basic for AC-Diff} (3).  
Then we obtain the equality $E_{\mathcal{A}}(\alpha_Y,\alpha_W, m) =m K_W$ as in the first case. 
\end{proof}

\begin{lem-defn}\label{Definition of Diff}
Let $(Y, W, W', i, \mu, f, \Delta)$ be as in Setting \ref{setup for Diff} and $\Gamma \in \WSh^*_{\Q}(Y)$ be the $\Q$-AC divisor corresponding to the $\Q$-Weil divisor $\Delta \in \WDiv^*_\Q(Y)$.
Then the following hold.
\begin{enumerate}
\item $\mathcal{A} : = (Y, W, W', i, \mu, f, \Gamma)$ satisfies all the conditions in Setting \ref{setup for AC-Diff}.
\item The $\Q$-AC divisor $\ACDiff_{W'}(\Gamma) \in \WSh_\Q(W')$ is contained in $\WSh^*_{\Q}(W')$.
\end{enumerate}
We define the \emph{different} $\Diff_{W'}(\Delta) \in \WDiv_{\Q}^*(W')$ of $\Delta$ on $W'$ as the $\Q$-Weil divisor corresponding to the $\Q$-AC divisor $\ACDiff_{W'}(\Gamma) \in \WSh^*_{\Q}(W')$.
\end{lem-defn}

\begin{proof}
(1) is obvious.
For (2), take $m, \alpha_Y, \alpha_{W'}$ and $K_{W'}$ as in the discussion preceding Lemma \ref{basic for AC-Diff}.  
It is enough to show that the AC-divisor $E_{\mathcal{A}}(\alpha_Y, \alpha_{W'}, m)-m K_{W'}$ is contained in $\WSh^*(W')$.
Take a codimension one singular point $w'$ of $W'$.
After shrinking $Y$ around $f(w')$, we may assume that $\Delta=0$ and $W$ is Cartier.
Then the equality $E_{\mathcal{A}}(\alpha_Y, \alpha_{W'}, m)-m K_{W'} =0$ can be shown in a way similar to the proof of Lemma \ref{restriction of AC-divisor}. 
\end{proof}

\begin{lem}\label{AC-Diff vs Cond1}
Let $ \mathcal{A}: =(Y, W, W', i, \mu, f, \Gamma)$ be as in Setting \ref{setup for AC-Diff} and $\pi: W^n = (W')^n \to W'$ be the normalization of $W'$.
Then 
\[
\ACDiff_{W^n}(\Gamma) = \pi^* \ACDiff_{W'}(\Gamma) + C_{W'},
\]
where $C_{W'}$ denotes the conductor divisor of $\pi$ on $W^n=(W')^n$. 
\end{lem}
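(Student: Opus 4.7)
The plan is to compute both sides of the claimed identity by choosing canonical AC divisors on $W'$ and $W^n$ that are compatible via the trace map of the normalization $\pi$, and then to reduce the comparison of the differents to a compatibility between the Poincar\'e residue on $V \subseteq Y$ and this trace map.

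First I would fix inclusions $\alpha_Y \colon \omega_Y \hookrightarrow \mathcal{K}_Y$ and $\alpha_{W'} \colon \omega_{W'} \hookrightarrow \mathcal{K}_{W'}$ representing the canonical AC divisors $K_Y$ and $K_{W'}$ that were used to define $\ACDiff_{W'}(\Gamma)$. Applying Lemma \ref{Tr-transpose}(1) to the finite birational morphism $\pi \colon W^n \to W'$ produces a unique morphism $\alpha_{W^n} \colon \omega_{W^n} \to \mathcal{K}_{W^n}$ such that $\theta_\pi \circ \pi_*(\alpha_{W^n}) = \alpha_{W'} \circ \mathrm{Tr}_\pi$. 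Since $W^n$ is normal, $\alpha_{W^n}$ is an inclusion and defines a canonical AC divisor $K_{W^n}$ on $W^n$ associated to $\pi^{!}\omega_{W'}^\bullet$. By Koll\'ar's description of the canonical divisor under normalization (recalled in the paragraph preceding Definition \ref{defn slc}), this choice satisfies $\pi^{*}K_{W'} = K_{W^n} + C_{W'}$.

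Next, I would observe that the tuple $\mathcal{A}^n := (Y, W, W^n, i, \mu \circ \pi, f \circ \pi, \Gamma)$ satisfies the hypotheses of Setting \ref{setup for AC-Diff}, since $W^n$ is normal (so $(S_2)$ and $(G_1)$) and $\mu \circ \pi$ is finite birational; conditions (1), (2), (4) are inherited from $\mathcal{A}$. Choose a common integer $m \ge 1$ such that $m\Gamma \in \WSh^{*}(Y)$ and $m(K_Y + W) + m\Gamma$ is Cartier at every codimension one point of $W$, so that both differents can be computed from the construction preceding Lemma \ref{basic for AC-Diff} applied to $\mathcal{A}$ and to $\mathcal{A}^n$. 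The goal then reduces to proving the identity
\[
E_{\mathcal{A}^n}(\alpha_Y, \alpha_{W^n}, m) = \pi^{*}E_{\mathcal{A}}(\alpha_Y, \alpha_{W'}, m),
\]
since combining it with the canonical divisor identity $\pi^{*}K_{W'} = K_{W^n} + C_{W'}$ immediately yields $\ACDiff_{W^n}(\Gamma) = \pi^{*}\ACDiff_{W'}(\Gamma) + C_{W'}$.

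For the displayed identity, I would compare the two defining maps $\beta_{\mathcal{A}}(\alpha_Y, \alpha_{W'}, m)$ and $\beta_{\mathcal{A}^n}(\alpha_Y, \alpha_{W^n}, m)$ over a common open $V \subseteq Y$ on which $Y$ is regular and $\Gamma|_V = 0$, with $U := W \cap V$ chosen small enough that both $\mu^{-1}(U) \to U$ and $\pi^{-1}(\mu^{-1}(U)) \to \mu^{-1}(U)$ are isomorphisms (possible since $U$ is regular and these morphisms are birational). Over $U$, the Poincar\'e residue computation on $V$ is identical in both constructions; the only difference lies in the chosen inclusions $\alpha_W \hookrightarrow \mathcal{K}_W$ obtained from $\alpha_{W'}$ (respectively $\alpha_{W^n}$) by the transpose-trace procedure of Lemma \ref{Tr-transpose}. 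By functoriality of the trace, $\mathrm{Tr}_{\mu \circ \pi} = \mathrm{Tr}_\mu \circ \mu_{*}\mathrm{Tr}_\pi$, so our choice of $\alpha_{W^n}$ from the first step is precisely the one that produces the same $\alpha_W$ on $U$ as $\alpha_{W'}$ does. Uniqueness in Lemma \ref{Tr-transpose} then forces $\beta_{\mathcal{A}^n}$ to agree with the pullback of $\beta_{\mathcal{A}}$ under $\pi$ on the common open subset containing all codimension one regular points of $W^n$, and the reflexive-hull formation on the $(S_2)$ scheme $W^n$ (via Lemma \ref{S2}(4)) propagates this to the desired equality of AC divisors.

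The main obstacle will be the third step: verifying the compatibility of the Poincar\'e residue with the trace map $\mathrm{Tr}_\pi$ in a way that respects the chosen embeddings $\alpha_{W'}$ and $\alpha_{W^n}$ into the sheaves of total quotients. This is a diagram chase using the functoriality of $\mathrm{Tr}$ under composition of finite morphisms together with the compatibility of $\mathrm{Res}$ under open immersions referenced in the definition of $\beta_{\mathcal{A}}$; once it is set up correctly, the canonical divisor identity $\pi^{*}K_{W'} = K_{W^n} + C_{W'}$ does the remaining bookkeeping to bring in the conductor.
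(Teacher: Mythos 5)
Your proposal follows essentially the same route as the paper's proof: fix $\alpha_Y$ and $\alpha_{W'}$, build $\alpha_{W^n}$ via Lemma \ref{Tr-transpose} so that $K_{W^n}=\pi^*K_{W'}-C_{W'}$, then use the trace composition identity $\mathrm{Tr}_{\mu\circ\pi}=\mathrm{Tr}_\mu\circ\mu_*\mathrm{Tr}_\pi$ to see that $\beta^V_{\mathcal{A}}(\alpha_Y,\alpha_{W'},m)$ and $\beta^V_{\mathcal{A}^n}(\alpha_Y,\alpha_{W^n},m)$ coincide on the generic open and hence $E_{\mathcal{A}^n}=\pi^*E_{\mathcal{A}}$. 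One minor remark: your final paragraph anticipates a ``diagram chase'' involving the Poincar\'e residue and $\mathrm{Tr}_\pi$, but in the paper this step collapses to the observation that both $\alpha_{W'}$ and $\alpha_{W^n}$ induce the \emph{same} $\alpha_W$ on $U$, so the two $\beta^V$-maps are literally equal and no residue-trace compatibility is required.
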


\begin{proof}
We first note that the tuple $\mathcal{A}' : =(Y, W, W^n, i, \rho : = \mu \circ \pi, g : = f \circ \pi, \Delta)$ satisfies the conditions in Setting \ref{setup for AC-Diff}.
\[
\xymatrix{
 Y  & & \\
W \ar@{^{(}->}^-{i}[u]  & W' \ar^-{f}[ul] \ar^-{\mu}[l] & W^n \ar^-{\pi}[l] \ar@/^18pt/[ll]^{\rho} \ar_-{g}[ull]
}\]
After shrinking $Y$, we may assume that $K_Y+W+\Delta$ is $\Q$-Cartier.
Take an integer $m \ge 1$ such that $m \Gamma \in \WSh^{*}(Y)$ and $m(K_Y+W)+m\Gamma$ is Cartier. 
Since $Y$ admits a canonical AC divisor $K_Y$, we have the corresponding inclusion $\alpha_Y:\omega_Y \hookrightarrow \mathcal{K}_Y$. 
Let $\alpha_{W'} : \omega_{W'} \hookrightarrow \mathcal{K}_{W'}$ and $\alpha_{W^n} : \omega_{W^n} \hookrightarrow \mathcal{K}_{W^n}$ be inclusions such that a diagram involving $\alpha_{W'}$ and $\alpha_{W^n}$, similar to \eqref{Tr-transpose diagram}, commutes and 
let $K_{W'}$ and $K_{W^n}$ be the corresponding canonical AC divisors.
By the choice of $\alpha_{W'}$ and $\alpha_{W^n}$, we have $K_{W^n}=\pi^* K_{W'} - C_{W'}$.

Take an open subscheme $V \subseteq Y$ such that $V$ is regular, $\Gamma|_V=0$, $U : = W \cap V$ is regular and $U$ contains all generic points of $W$.
We also take an inclusion $\alpha_W : \omega_W \hookrightarrow \mathcal{K}_W$ such that a diagram involving $\alpha_{W}$ and $\alpha_{W'}$, similar to \eqref{Tr-transpose diagram}, commutes. 
It then follows from the equality $\mathrm{Tr}_{\mu} \circ \mu_* \mathrm{Tr}_{\pi} = \mathrm{Tr}_{\rho}$, which is a consequence of the diagram \eqref{Trace composition diagram},  that a similar diagram involving $\alpha_W$ and $\alpha_{W^n}$ also commutes.
Therefore,  
\[
\beta^V_{\mathcal{A}}(\alpha_Y,\alpha_{W'}, m)= \beta^V_{\mathcal{A'}}(\alpha_Y,\alpha_{W^n}, m), 
\]
which implies that the following diagram
\[
\xymatrix{
& g^* \sO_Y(m (K_Y+ W) +m \Gamma) \ar^-{\pi^* \beta_{\mathcal{A}}}[dl] \ar_-{\beta_{\mathcal{A'}}}[dr]& \\
\pi^* \mathcal{K}_{W'} \ar_-{\theta_{\pi}^{\mathrm{adj}}}[rr] && \mathcal{K}_{W^n} 
}
\]
commutes, where $\beta_{\mathcal{A}} = \beta_{\mathcal{A}}(\alpha_Y,\alpha_{W'}, m)$, $\beta_{\mathcal{A'}} = \beta_{\mathcal{A'}}(\alpha_Y,\alpha_{W^n}, m)$ and $\theta_{\pi}^{\mathrm{adj}}$ is the natural isomorphism.
Taking into account that $\sO_Y(m (K_Y+ W) +m \Gamma)$ is invertible, we have 
\begin{eqnarray*}
\sO_{W^n}(E_{\mathcal{A'}}(\alpha_{Y}, \alpha_{W^n}, m)) &=& \mathrm{Image}(\beta_{\mathcal{A}'}) \\
&=& \theta_{\pi}^{\mathrm{adj}}(\mathrm{Image}(\pi^* \beta_{\mathcal{A}})) \\
&=& \theta_{\pi}^{\mathrm{adj}}(\pi^* \sO_{W'}(E_{\mathcal{A}}(\alpha_Y, \alpha_{W'}, m)) \\
&=& \sO_{W^n}(\pi^*E_{\mathcal{A}}(\alpha_Y, \alpha_{W'}, m))
\end{eqnarray*}
 as submodules of $\mathcal{K}_{W^n}$. 
Thus, 
\begin{eqnarray*}
m(K_{W^n} + \ACDiff_{W^n}(\Gamma)) &=& E_{\mathcal{A'}}(\alpha_{Y}, \alpha_{W^n}, m) \\
&=& \pi^* E_{\mathcal{A}}(\alpha_Y, \alpha_{W'}, m) \\
&=& \pi^*( mK_{W'} + m \ACDiff_{W'}(\Gamma)) \\
&=& m(K_{W^n} +(\pi^* \ACDiff_{W'}(\Gamma) + C_{W'})),
\end{eqnarray*}
which completes the proof.
\end{proof}

\begin{lem}\label{AC-Diff vs Cond2}
Let $\mathcal{A} : = (Y, W, W', i, \mu, f, \Gamma)$ be as in Setting \ref{setup for AC-Diff} such that $f: W' \to Y$ factors through the normalization $\nu: Y^n \to Y$ of $Y$.
Then
\[
\ACDiff_{W'}(\Gamma) = \ACDiff_{W'}(\nu^* \Gamma +C_Y),
\]
where $C_Y$ denotes the conductor divisor of $\nu$ on $Y^n$.
\end{lem}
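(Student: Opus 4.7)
The plan is to form the tuple $\mathcal{A}' = (Y^n, W'', W', j, \pi', g, \nu^{*}\Gamma + C_Y)$ through the normalization $\nu : Y^n \to Y$, exactly as in the proof of Lemma \ref{Diff vs Cond2} (with $g : W' \to Y^n$ induced by $f$, $W'' \subseteq Y^n$ the reduced image of $g$, and $j, \pi'$ the evident maps), verify that $\mathcal{A}'$ satisfies Setting \ref{setup for AC-Diff}, and then identify the AC divisors $E_{\mathcal{A}}(\alpha_Y, \alpha_{W'}, m)$ and $E_{\mathcal{A}'}(\alpha_{Y^n}, \alpha_{W'}, m)$ produced by the two constructions of the different. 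Conditions (1)--(3) of Setting \ref{setup for AC-Diff} for $\mathcal{A}'$ are immediate from the fact that $\nu$ is finite birational, $Y^n$ is normal (hence $(S_2)$ and $(G_1)$), and $W''$ is the scheme-theoretic preimage $\nu^{-1}W$ which agrees with $W$ at codimension one regular points of $Y$. Condition (4) follows because $C_Y$ and the locus where $\nu$ is not an isomorphism are supported in codimension one, and because $\nu^{!}\omega_Y^{\bullet} \simeq \omega_{Y^n}^{\bullet}$ gives $K_{Y^n} \sim \nu^{*}K_Y - C_Y$ at every codimension one point of $W''$ where $K_Y + W + \Gamma$ is $\Q$-Cartier.

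Next, I would fix compatible data. Choose $\alpha_Y : \omega_Y \hookrightarrow \mathcal{K}_Y$ giving $K_Y$, and use Lemma \ref{Tr-transpose} applied to the finite birational $\nu$ to produce the unique $\alpha_{Y^n} : \omega_{Y^n} \hookrightarrow \mathcal{K}_{Y^n}$ fitting into the trace square; the associated canonical AC divisor then equals $K_{Y^n} = \nu^{*}K_Y - C_Y$ as subsheaves of $\mathcal{K}_{Y^n}$. Similarly, choose $\alpha_{W'}$ for $W'$. Pick an integer $m \ge 1$ large enough that $m\Gamma$, $m(\nu^{*}\Gamma + C_Y)$ are integral and both $m(K_Y + W) + m\Gamma$ and $m(K_{Y^n} + W'') + m(\nu^{*}\Gamma + C_Y)$ are Cartier at the relevant codimension one points.

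The key comparison is carried out on a dense open $V \subseteq Y$ chosen regular with $\Gamma|_V = 0$ such that $U := V \cap W$ is regular and contains every generic point of $W$; then $V$ is contained in the normal locus of $Y$, so $\nu$ is an isomorphism over $V$ and $C_Y$ restricts to zero there, while $(\nu^{*}\Gamma + C_Y)|_{\nu^{-1}(V)} = 0$. On this open, the defining compositions $\beta_{\mathcal{A}}^V(\alpha_Y,\alpha_{W'}, m)$ and $\beta_{\mathcal{A}'}^V(\alpha_{Y^n},\alpha_{W'}, m)$ are built from the same Poincaré residue for the smooth pair $(V, U)$, and the compatibility of the residue with the open immersion $\nu^{-1}(V) \hookrightarrow Y^n$ (together with the trace-compatibility of $\alpha_{Y^n}$ with $\alpha_Y$ and the identification $K_{Y^n} = \nu^{*}K_Y - C_Y$) forces them to coincide as maps into $\mathcal{K}_{W'}$. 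Taking reflexive hulls of images gives $E_{\mathcal{A}} = E_{\mathcal{A}'}$ in $\WSh(W')$, from which $\ACDiff_{W'}(\Gamma) = \ACDiff_{W'}(\nu^{*}\Gamma + C_Y)$ follows from the definition of the different and Lemma \ref{basic for AC-Diff} (1).

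The main technical obstacle will be the conductor bookkeeping: the identity $K_{Y^n} = \nu^{*}K_Y - C_Y$ must be realized honestly at the level of the inclusions $\alpha_Y, \alpha_{Y^n}$ into the sheaves of total quotients, not merely up to linear equivalence, so that the extra factor $m C_Y$ appearing in the image of $\beta_{\mathcal{A}'}$ is exactly compensated by the change of canonical divisor from $\nu^{*}K_Y$ to $K_{Y^n}$. Once this compatibility is set up via Lemma \ref{Tr-transpose}, the argument becomes a local computation on the normal-crossing open $V$, and the $(S_2)$-criterion for AC divisors (Lemma \ref{S2}) propagates the equality from codimension one to all of $W'$, mirroring the structure of the proof of Lemma \ref{AC-Diff vs Cond1} but with the normalization applied on the ambient side rather than on the divisorial side.
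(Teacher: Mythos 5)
Your proposal follows the paper's proof very closely: form the auxiliary tuple $\mathcal{A}'$ over $Y^n$, choose $\alpha_{Y^n}$ trace-compatibly with $\alpha_Y$ via Lemma~\ref{Tr-transpose} so that $K_{Y^n}=\nu^*K_Y-C_Y$ at the sheaf level, pass to a regular open $V\subseteq Y$ over which $\nu$ is an isomorphism and $\Gamma|_V=0$, and compare $\beta_{\mathcal{A}}$ with $\beta_{\mathcal{A}'}$ there before taking reflexive hulls. This is exactly the strategy of the paper's proof.

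The one place where your write-up is slightly too compressed is the final coincidence of the two $\beta$-maps. You attribute it to ``compatibility of the residue with the open immersion $\nu^{-1}(V)\hookrightarrow Y^n$'' together with the trace-compatibility of the $\alpha$'s, but what the paper actually invokes is the commutativity of the trace--residue square \eqref{Trace and Residue} (applied to the isomorphisms $\nu'\colon V^n\to V$ and $\rho'\colon U''\to U$), i.e.\ the compatibility of the Poincar\'e residue with trace maps of finite surjective morphisms, not merely with open immersions. One also needs to choose $\alpha_W$, $\alpha_{W'}$ and $\alpha_{W''}$ pairwise trace-compatibly (not just $\alpha_Y$ and $\alpha_{Y^n}$), since $\beta_{\mathcal{A}}^V$ is built from the induced $\alpha_W$ and $\beta_{\mathcal{A}'}^{V^n}$ from the induced $\alpha_{W''}$. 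These are precisely what make the diagram on $U$ commute. So the gap is one of precision rather than substance: the argument works, but you should name \eqref{Trace and Residue} (rather than only \eqref{Res open}) as the lemma that closes the loop.
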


\begin{proof}
We first note that the tuple $\mathcal{A}' : =(Y^n, W'', W', j, \pi, g , \nu^*\Gamma+C_Y)$ satisfies the conditions in Setting \ref{setup for AC-Diff}, where $g: W' \to Y^n$ is the morphism induced by $f$, $W'' \subseteq Y^n$ is the reduced image of $g$, and $j$, $\pi$ and $\rho$ are natural morphisms such the the following diagram commutes: 
\[
\xymatrix{
 Y  & Y^n \ar^-{\nu}[l]& \\
W \ar@{^{(}->}^-{i}[u]  & W'' \ar@{^{(}->}^-{j}[u] \ar^-{\rho}[l] & W'. \ar^-{\pi}[l] \ar@/^18pt/[ll]^{\mu} \ar_-{g}[ul] \ar@/_30pt/[ull]_{f}
}\]
We also remark that $\nu^*W=W''$ by the same argument as the proof of \ref{Diff vs Cond2}. 
After shrinking $Y$, we may assume that $K_Y+W+\Gamma$ is $\Q$-Cartier.
Take an integer $m \ge 1$ such that $m \Gamma \in \WSh^{*}(Y)$ and $m(K_Y+W)+m\Gamma$ is Cartier.
 
Let $\alpha_Y : \omega_Y \hookrightarrow \mathcal{K}_Y$ and $\alpha_{Y^n} : \omega_{Y^n} \hookrightarrow \mathcal{K}_{Y^n}$ be inclusions such that a diagram involving $\alpha_{W}$ and $\alpha_{W'}$, similar to \eqref{Tr-transpose diagram}, commutes and let $K_Y$ and $K_{Y^n}$ be the corresponding canonical AC divisors. Then 
$K_{Y^n} = \nu^* K_Y - C_Y$. 
We also take inclusions $\alpha_{W} : \omega_{W} \hookrightarrow \mathcal{K}_{W}$, $\alpha_{W'} : \omega_{W'} \hookrightarrow \mathcal{K}_{W'}$ and $\alpha_{W''} : \omega_{W''} \hookrightarrow \mathcal{K}_{W''}$ such that each two of them satisfy a similar commutativity. 

Let $\mathcal{F} \subseteq \mathcal{K}_Y$ and $\mathcal{G} \subseteq \mathcal{K}_{Y^n}$ denote the submodules corresponding $m(K_Y+W)+m\Gamma$ and $m(K_{Y^n}+W'' +(\nu^*\Gamma +C_Y))= \nu^*(m(K_Y+W)+m \Gamma)$, respectively.
Since $\mathcal{F}$ is invertible, the canonical isomorphism $\theta_{\nu}^{\mathrm{adj}}: \nu^* \mathcal{K}_Y \xrightarrow{\sim} \mathcal{K}_{Y^n}$ induces the isomorphism $\theta_{\nu}^{\mathrm{adj}}: \nu^* \mathcal{F} \xrightarrow{\sim} \mathcal{G}$.
As in the proof of Lemma \ref{AC-Diff vs Cond1}, it suffices to show that the following diagram
\[
\xymatrix{
f^* \mathcal{F} \ar_-{\beta_{\mathcal{A}}}[dr] \ar^{\sim}_-{g^* \theta_\nu^{\mathrm{adj}}}[rr] && g^* \mathcal{G} \ar^-{\beta_{\mathcal{A}'}}[dl] \\
&\mathcal{K}_{W'}&
}\]
commutes, where $\beta_{\mathcal{A}} : = \beta_{\mathcal{A}}(\alpha_Y, \alpha_{W'},m)$ and $\beta_{\mathcal{A}'} : = \beta_{\mathcal{A}'}(\alpha_{Y^n}, \alpha_{W'},m)$.

Take an open subscheme $V \subseteq Y$ such that $V$ is regular, $\Gamma|_V=0$, $U : = V \cap W$ is regular and $U$ contains all generic points of $W$.
Let $\nu': V^n : =\nu^{-1}(V) \xrightarrow{\sim} V$ and $\rho' : U'' : = \rho^{-1}(U) \xrightarrow{\sim} U$ denote the isomorphisms induced by $\nu$ and $\rho$, respectively.
Then the problem can be reduced to showing that the following diagram
\[
\xymatrix{
(\rho')^* (\mathcal{F}|_U) \ar_-{(\rho')^* \beta_{\mathcal{A}}^V}[dr] \ar^{\sim}_-{(\rho')^* (\theta_\nu^{\mathrm{adj} }|_U)}[rr] && \mathcal{G}|_{U''} \ar^-{\beta^{V^n}_{\mathcal{A}'}}[dl] \\
&\mathcal{K}_{U''}&
}\]
commutes, where $\beta_{\mathcal{A}}^V : = \beta_{\mathcal{A}}^V(\alpha_Y, \alpha_{W'},m)$ and $\beta_{\mathcal{A}'}^{V^n} : = \beta_{\mathcal{A}'}^{V^n}(\alpha_{Y^n}, \alpha_{W'},m)$.
By taking $(\rho')^*$-$(\rho')_*$-adjoint, this follows from the commutativity of the following diagram (c.f.~ \eqref{Trace and Residue}).
\[
\xymatrix{
\omega_V(U)|_U \ar_-{\mathrm{Res}_{V/U}}[d] && (\rho')_* (\omega_{V^n}(U'')|_{U''}) \ar_-{\mathrm{Tr}_{\nu'}|_U}[ll] \ar^-{\rho'_* \mathrm{Res}_{V^n/U''}}[d] \\
\omega_U && (\rho')_* \omega_{U''} \ar^-{\mathrm{Tr}_{\rho'}}[ll]
}\]

\end{proof}

\subsection{Existence of a canonical divisor}

In this subsection, we give a sufficient condition for a scheme to admit a canonical AC divisor.

\begin{lem}\label{S1 vs AC}
Let $X$ be an excellent reduced scheme and $\mathcal{F}$ be an $S_1$ coherent sheaf such that $\mathcal{F}_{\eta}$ is an invertible $\sO_{X,\eta}$-module for every generic point $\eta \in X$.
Then there exists an inclusion $\mathcal{F} \hookrightarrow \mathcal{K}_X$.
\end{lem}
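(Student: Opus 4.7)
The plan is to factor the desired injection as
\[
\mathcal{F} \hookrightarrow \mathcal{F} \otimes_{\sO_X} \mathcal{K}_X \xrightarrow{\ \sim\ } \mathcal{K}_X,
\]
using two structural facts about a reduced Noetherian scheme: the total quotient sheaf decomposes as a product over generic points, and the local rings at generic points are fields. Concretely, if $\eta_1,\dots,\eta_r$ are the generic points of $X$ and $\iota_i:\overline{\{\eta_i\}} \hookrightarrow X$ is the inclusion of the $i$-th irreducible component, then $\mathcal{K}_X \cong \prod_{i=1}^r \iota_{i,*} K(\eta_i)$ where $K(\eta_i)=\sO_{X,\eta_i}$ is a field (the standard structure theorem for the total quotient ring of a reduced Noetherian ring, sheafified). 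Tensoring with any quasi-coherent $\mathcal{F}$ gives $\mathcal{F} \otimes_{\sO_X} \mathcal{K}_X \cong \prod_i \iota_{i,*}(\mathcal{F}_{\eta_i})$.

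For the injectivity of $\mathcal{F} \to \mathcal{F} \otimes_{\sO_X} \mathcal{K}_X$, I would argue locally on an affine open $\Spec A$: writing $\mathcal{F}|_{\Spec A}=\widetilde{M}$ and letting $S$ denote the set of non-zero-divisors of $A$, the kernel is the $S$-torsion submodule $T \subseteq M$. Any $\mathfrak{p} \in \operatorname{Ass}(T)$ lies in $\operatorname{Ass}(M)$ and must contain an element of $S$ (namely an element annihilating a witness for $\mathfrak{p}$), so $\mathfrak{p}$ contains a non-zero-divisor; for $A$ reduced this means $\mathfrak{p}$ is not a minimal prime. On the other hand, because each $\mathcal{F}_{\eta_i}$ is nonzero we have $\Supp\mathcal{F}=X$, so the $S_1$ hypothesis forces $\operatorname{Ass}(\mathcal{F})$ to consist only of minimal primes. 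Hence $\operatorname{Ass}(T)=\emptyset$, i.e.\ $T=0$.

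For the isomorphism $\mathcal{F} \otimes_{\sO_X} \mathcal{K}_X \xrightarrow{\sim} \mathcal{K}_X$, I would use that $\mathcal{F}_{\eta_i}$ is an invertible module over the field $\sO_{X,\eta_i}$, and is therefore a one-dimensional $K(\eta_i)$-vector space. Choosing a basis element of $\mathcal{F}_{\eta_i}$ for each $i$ produces an isomorphism of $\sO_X$-modules
\[
\mathcal{F} \otimes_{\sO_X} \mathcal{K}_X \;\cong\; \prod_i \iota_{i,*}(\mathcal{F}_{\eta_i}) \;\xrightarrow{\ \sim\ }\; \prod_i \iota_{i,*} K(\eta_i) \;\cong\; \mathcal{K}_X,
\]
and composing with the injection of the previous step yields the required inclusion $\mathcal{F} \hookrightarrow \mathcal{K}_X$.

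The only even mildly delicate point is the injectivity step, which is the standard translation of the $S_1$ condition into a statement about associated primes; everything else is bookkeeping with the product decomposition of $\mathcal{K}_X$. Note that excellence plays no role beyond ensuring $X$ is Noetherian.
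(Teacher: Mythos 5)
Your proposal is correct and takes essentially the same approach as the paper: the paper constructs the map $\mathcal{F}\to\mathcal{K}_X$ as the adjoint of a chosen isomorphism $i^*\mathcal{F}\xrightarrow{\sim}\widetilde{Q}$ along $i:\Spec\big(\prod_\eta\kappa(\eta)\big)\to X$, which under the projection formula is exactly your composite $\mathcal{F}\to\mathcal{F}\otimes_{\sO_X}\mathcal{K}_X\xrightarrow{\sim}\mathcal{K}_X$; the paper then compresses your associated-primes analysis of the torsion kernel into the one-line observation that a map out of an $(S_1)$ sheaf injective at all generic points is injective. Your write-up supplies exactly the details the paper leaves implicit.
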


\begin{proof}
Let $Q : = \prod_{\eta} \kappa(\eta)$ denote the product of the residue fields $\kappa(\eta)$ of all generic points $\eta \in X$.
Since $X$ is reduced, $\mathcal{K}_X$ is isomorphic to $i_* \widetilde{Q}$, where $i : \Spec Q \to X$ is the natural morphism.

Since $\mathcal{F}$ is invertible at all generic points of $X$, there exists an isomorphism $i^* \mathcal{F} \to \widetilde{Q}$, which induces the adjoint morphism $\alpha: \mathcal{F} \to \mathcal{K}_X$.
Since $\alpha$ is injective at every generic point of $X$ and $\mathcal{F}$ satisfies $(S_1)$, we conclude that $\alpha$ is injective.
\end{proof}

\begin{lem}\label{canonical AC divisor1}
Let $X$ be an excellent reduced $(S_2)$ and $(G_1)$ scheme with a dualizing complex $\omega_X^{\bullet}$.
Let $\delta:X \to \Z$ be the dimension function associated to $\omega_X^{\bullet}$ $($see \cite[Lemma 0AWF]{Sta} for definition$)$.
Then the following conditions are equivalent to each other. 
\begin{enumerate}[label=$(\arabic*)$]
\item $X$ admits a canonical AC divisor associated to $\omega_X^{\bullet}$.
\item The support of the canonical sheaf $\omega_X$ coincides with $X$.
\item $\delta(\eta)$ is constant for all generic points $\eta$ of $X$.
\end{enumerate}
\end{lem}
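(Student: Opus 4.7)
The plan is to establish $(1) \Leftrightarrow (2)$ and $(2) \Leftrightarrow (3)$ separately, exploiting the interaction between the dualizing complex $\omega_X^{\bullet}$, its associated dimension function $\delta$, and the canonical sheaf $\omega_X$.

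For $(2) \Leftrightarrow (3)$, I will use the local description of $\omega_X^{\bullet}$ at generic points. Since $X$ is reduced, the local ring $\sO_{X,\eta}$ at any generic point $\eta$ is the field $\kappa(\eta)$, so $(\omega_X^{\bullet})_\eta$ is a dualizing complex over a field and is concentrated in a single cohomological degree, namely $-\delta(\eta)$ by the normalization used in the definition of $\delta$. Since $\omega_X = H^{n_0}(\omega_X^{\bullet})$ for the smallest $n_0$ with $H^{n_0}(\omega_X^{\bullet}) \neq 0$, the stalk $\omega_{X,\eta}$ is nonzero exactly when $-\delta(\eta) = n_0$, i.e.\ when $\delta(\eta)$ attains the maximum value over generic points of $X$. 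Because the support of a coherent sheaf is closed, $\Supp \omega_X = X$ is equivalent to $\omega_{X,\eta} \neq 0$ for every generic $\eta$, which in turn is equivalent to $\delta$ being constant on generic points.

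For $(1) \Rightarrow (2)$: if $\mathcal{F} \subseteq \mathcal{K}_X$ is a canonical AC divisor with $\mathcal{F} \cong \omega_X$, then $\mathcal{F}_\eta$ is invertible inside $\mathcal{K}_{X,\eta} = \kappa(\eta)$, hence nonzero, for every generic $\eta$; transporting through the isomorphism gives $\omega_{X,\eta} \neq 0$ for all generic $\eta$, so $\Supp \omega_X = X$. For $(2) \Rightarrow (1)$, I will apply Lemma \ref{S1 vs AC} to $\omega_X$. The required hypotheses are that $\omega_X$ satisfies $(S_1)$ and is invertible at each generic point; the latter follows from (2) combined with the fact just noted that the nonzero stalk of the dualizing complex at a generic point is one-dimensional over the residue field, while the former is a standard consequence of the $(S_2)$ hypothesis on $X$ (on an excellent $(S_2)$ scheme with dualizing complex, the canonical sheaf is itself $(S_2)$). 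The resulting embedding $\omega_X \hookrightarrow \mathcal{K}_X$ has image $\mathcal{F}$ inheriting the isomorphism $\mathcal{F} \cong \omega_X$ and the $(S_2)$ property. To finish, we must check that $\mathcal{F}$ is invertible at every codimension $\leq 1$ point: generic points were handled above, and at a codimension one point the $(G_1)$ hypothesis makes $\sO_{X,x}$ a one-dimensional Gorenstein local ring, over which the dualizing module is free of rank one.

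The main technical obstacle will be pinning down precisely the relationship between the dimension function $\delta(\eta)$ at a generic point and the cohomological degree in which $(\omega_X^{\bullet})_\eta$ is concentrated, which requires carefully matching conventions with the reference cited for the definition of $\delta$. The remaining steps are straightforward applications of the $(S_2)$ and $(G_1)$ hypotheses via Lemma \ref{S1 vs AC}.
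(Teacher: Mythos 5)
Your proof is correct and follows the same strategy as the paper's, which is to combine the $(S_2)$-ness of $\omega_X$, the Stacks Project characterization of $\Supp\omega_X$ as the union of irreducible components of maximal $\delta$-value, and Lemma \ref{S1 vs AC}. The paper cites \cite[Lemma 0AWK]{Sta} where you prove $(2)\Leftrightarrow(3)$ directly from the local description at generic points; both are fine. You also spell out something the paper leaves implicit: that the image of the embedding $\omega_X\hookrightarrow\mathcal{K}_X$ is invertible at codimension $\le 1$ points. For that step it is worth being slightly more explicit that at a codimension one point $x$, the nonvanishing $(\omega_X)_x\neq 0$ (forced by (2)) together with $(\omega_X^{\bullet})_x\simeq\sO_{X,x}[n]$ (Gorenstein, by $(G_1)$) identifies $(\omega_X)_x$ with the free rank one dualizing module of $\sO_{X,x}$; as written, the sentence slides past the need for $(\omega_X)_x\neq 0$, but the ingredient is already in hand from (2), so the argument is sound.
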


\begin{proof}
Since $\omega_X$ satisfies $(S_2)$ and the support of $\omega_X$ is the union of the irreducible components of maximal dimension with respect to $\delta$ (see \cite[Lemma 0AWK]{Sta}), the assertion follows from Lemma \ref{S1 vs AC}.
\end{proof}

\begin{defn}\label{biequidim}
A topological space $X$ of finite Krull dimension is \emph{biequidimensional} if all maximal chains of irreducible closed subsets of $X$ have the same length.
\end{defn}

\begin{eg}[\textup{\cite[Lemma 2.4]{Hei}}]\label{biequidim vs equidim}
If $X=\Spec R$ is the spectrum of a Noetherian local ring $R$, then $X$ is biequidimensional if and only if $X$ is equidimensional and catenary.
\end{eg}

\begin{lem}\label{canonical AC divisor2}
Let $X$ be an excellent reduced $(S_2)$ and $(G_1)$ scheme with a dualizing complex $\omega_X^{\bullet}$.
Then $X$ admits a canonical AC divisor associated to $\omega_X^{\bullet}$ if one of the following conditions hold.
\begin{enumerate}[label=\textup{(\roman*)}]
\item There exists a finite morphism $f: X \to Y$ to an excellent reduced $(S_2)$ and $(G_1)$ scheme $Y$ with the following conditions:
\begin{enumerate}[label=\textup{(\alph*)}]
\item $Y$ admits a dualizing complex $\omega_Y^{\bullet}$ such that $f^! \omega_Y^{\bullet} \cong \omega_X^{\bullet}$, 
\item $Y$ admits a canonical AC divisor associated to $\omega_Y^{\bullet}$, and 
\item the codimension of the point $f(\eta) \in Y$ is constant for all generic points $\eta$ of $X$.
\end{enumerate}
\item $X$ is irreducible.
\item $X$ is connected and biequidimensional.
\end{enumerate}
\end{lem}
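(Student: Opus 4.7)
The overall plan is to reduce all three cases to Lemma \ref{canonical AC divisor1}, which tells us that $X$ admits a canonical AC divisor associated to $\omega_X^{\bullet}$ if and only if the associated dimension function $\delta_X : X \to \Z$ is constant on the set of generic points of $X$. So in each of (i), (ii), (iii) I will verify this single condition, without ever writing down an AC divisor explicitly.

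Case (ii) is immediate since an irreducible scheme has a unique generic point and $\delta_X$ is trivially constant there. For case (iii), I will use that $X$ is excellent and hence universally catenary; combined with biequidimensionality this forces $x \mapsto \dim \overline{\{x\}}$ to be a dimension function on $X$ (a standard fact about universally catenary biequidimensional schemes), taking the common value $\dim X$ at every generic point. Because $X$ is connected, any two dimension functions differ by a global integer shift (cf.\ \cite[Tag 0AWG]{Sta}), so $\delta_X$ is also constant on the set of generic points, which is exactly what we need.

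For case (i), the key input is that for a finite morphism $f: X \to Y$ with $f^! \omega_Y^{\bullet} \cong \omega_X^{\bullet}$, the induced dimension functions are related by $\delta_X(x) = \delta_Y(f(x))$; this follows from the standard comparison for $f^!$ along finite morphisms, the crucial point being that $\dim \sO_{X,x} = \dim \sO_{Y, f(x)}$ for finite $f$. By assumption (b) and Lemma \ref{canonical AC divisor1}, $\delta_Y$ takes some constant value $d$ on the generic points of $Y$. Since $Y$ is universally catenary with this property, every point $y \in Y$ satisfies the uniform formula $\delta_Y(y) = d - \dim \sO_{Y,y}$, because any chain from a generic point down to $y$ in $Y$ has length equal to $\dim \sO_{Y,y}$ independent of which generic point is chosen. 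Combined with assumption (c), this yields $\delta_X(\eta) = d - \dim \sO_{Y, f(\eta)}$, independent of the generic point $\eta$ of $X$, completing the verification.

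The main obstacle is the bookkeeping in case (iii): justifying that $x \mapsto \dim \overline{\{x\}}$ really is a dimension function under universally catenary plus biequidimensional, and the related uniform identity $\delta_Y(y) = d - \dim \sO_{Y,y}$ used in case (i). Both are genuinely combinatorial consequences of the altitude formula in universally catenary schemes and nothing more delicate than that; once they are in hand, Lemma \ref{canonical AC divisor1} closes the proof.
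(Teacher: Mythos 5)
Your approach is essentially the same as the paper's: reduce all three cases to Lemma \ref{canonical AC divisor1} by showing the dimension function $\delta$ associated to $\omega_X^{\bullet}$ is constant on the set of generic points, handle (ii) trivially, handle (iii) by comparing $\delta$ with the codimension-based dimension function via the locally-constant-difference fact, and handle (i) by the comparison $\delta_X = \delta_Y \circ f$ for finite $f$ (the paper cites \cite[Lemma 0AX1]{Sta} for exactly this) together with assumption (c). Your re-derivation in case (i) of the identity $\delta_Y(y) = d - \dim\sO_{Y,y}$ is a correct restatement of what the paper phrases as ``$\delta'(y)=\delta'(y')$ for any points $y, y'$ of the same codimension,'' so the content matches.

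One inaccuracy to flag: you assert that the equality $\delta_X(x) = \delta_Y(f(x))$ for finite $f$ rests on the claim that $\dim \sO_{X,x} = \dim \sO_{Y,f(x)}$. That equality of local dimensions is false for finite morphisms in general (e.g.~for the closed immersion $\Spec k \hookrightarrow \Spec k[t]$ one gets $0 \neq 1$); the actual reason $\delta_X = \delta_Y \circ f$ holds is that the general formula $\delta_X(x) = \delta_Y(f(x)) - \operatorname{trdeg}_{\kappa(f(x))}\kappa(x)$ for dimension functions under $f^!$ specializes, for a finite morphism, to the stated equality because the residue field extensions are algebraic. Since you do not actually use the wrong local-dimension claim anywhere downstream --- you only use $\delta_X = \delta_Y \circ f$, which is correct --- the argument is not invalidated, but the stated ``crucial point'' should be replaced by the transcendence-degree observation.
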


\begin{proof}
By Lemma \ref{canonical AC divisor1}, it is enough to show that $\delta(\eta)$ is constant for all generic points $\eta$ of $X$.
In the case (ii), this is obvious.
In the case (iii), it follows from \cite[Lemma 02IA]{Sta}.
In the case (i), let $\delta': Y \to \Z$ denote the dimension function associated to $\omega_Y^{\bullet}$.
Then it follows from \cite[Lemma 0AX1]{Sta} that $\delta = \delta' \circ f$.
Since $Y$ admits a canonical AC divisor, it follows from Lemma \ref{canonical AC divisor1} that $\delta'(y)=\delta'(y')$ for any points $y, y' \in Y$ with same codimension.
Thus the assertion follows again from Lemma \ref{canonical AC divisor1}.
\end{proof}

We next give a sufficient condition for the map
\[
\WSh^*(X) \to \WSh(X) \to \WSh(X)/\sim
\]
to be surjective, where $\sim$ denotes the linear equivalence of AC divisors.

\begin{lem}\label{Moving}
Let $(\Lambda, \m, k)$ be a Noetherian local ring with $k$ infinite, $A$ be a Noetherian $\Lambda$-algebra and $X$ be a quasi-projective $A$-scheme.
Suppose that $\Sigma \subseteq X$ is a finite subset and $D$ is an AC divisor which is Cartier at any points of $\Sigma$.
Then there exists an AC divisor $D'$ linearly equivalent to $D$ such that $\Sigma \cap \Supp D'=\emptyset$.
In particular, the map 
\[
\WSh^*(X) \to \WSh(X) \to \WSh(X)/\sim
\]
is surjective.
\end{lem}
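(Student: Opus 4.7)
The plan is to produce a rational function $f \in H^0(X, \mathcal{K}_X^*)$ satisfying $f \cdot \sO_{X,p} = \mathcal{F}_p$ as submodules of $\mathcal{K}_{X,p}$ for every $p \in \Sigma$, where $\mathcal{F} \subseteq \mathcal{K}_X$ denotes the coherent subsheaf corresponding to $D$. Granting such $f$, the AC divisor $D' := D - \Div(f)$ is linearly equivalent to $D$ and satisfies $\Sigma \cap \Supp D' = \emptyset$. To guarantee that $f$ is automatically invertible in $\mathcal{K}_X$, I first enlarge $\Sigma$ to the finite set $\Sigma' := \Sigma \cup \{\text{generic points of } X\}$; note that $\mathcal{F}$ is invertible at every point of $\Sigma'$ by the definition of an AC divisor.

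Since $X$ is quasi-projective over the Noetherian ring $A$, fix an ample invertible sheaf $\sO_X(1)$ and extend $\mathcal{F}$ to a coherent sheaf $\overline{\mathcal{F}}$ on a projective closure $\overline{X}$ of $X$ over $A$. Serre's theorem gives $n \gg 0$ for which $\overline{\mathcal{F}} \otimes \sO_{\overline{X}}(n)$ and $\sO_{\overline{X}}(n)$ are globally generated; in particular $\mathcal{F}(n) := \mathcal{F} \otimes \sO_X(n)$ and $\sO_X(n)$ are generated by their global sections at every point of $\Sigma'$. The key step is a Bertini-style argument. Choose a finitely generated $\Lambda$-submodule $V \subseteq H^0(X, \mathcal{F}(n))$ whose image in $\mathcal{F}(n)_p$ generates this $\sO_{X,p}$-module for each $p \in \Sigma'$; the reduction $\overline V := V/\m V$ is a finite-dimensional $k$-vector space. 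For every $p \in \Sigma'$, the evaluation $\overline V \to \mathcal{F}(n) \otimes k(p)$ is a surjective $k$-linear map onto a nonzero target (nonzero since $\mathcal{F}(n)$ is invertible at $p$; the factorization through $\overline V$ uses that $\m$ annihilates the target), hence its kernel $\overline K_p$ is a proper $k$-subspace of $\overline V$. Because $k$ is infinite, the finite union $\bigcup_{p \in \Sigma'} \overline K_p$ cannot exhaust $\overline V$, and any element of the complement lifts to the desired section $s \in H^0(X, \mathcal{F}(n))$. The same procedure applied to $\sO_X$ produces $t \in H^0(X, \sO_X(n))$ nonvanishing at every point of $\Sigma'$.

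On the open set $U := X \setminus Z(t)$, which contains all generic points of $X$ by our choice of $t$, the section $t$ trivializes $\sO_X(n)$, and $s|_U$ has the unique form $t \cdot f$ with $f \in H^0(U, \mathcal{F}|_U)$. Via the inclusion $\mathcal{F} \subseteq \mathcal{K}_X$ and the fact that $U$ contains every generic point of $X$, this $f$ defines an element of $H^0(X, \mathcal{K}_X^*)$; the nonvanishing of $s$ and $t$ at each $p \in \Sigma$ forces $f \cdot \sO_{X,p} = \mathcal{F}_p$, completing the main statement. The ``in particular'' clause follows by applying this to the (necessarily finite) set of codimension-one singular points of $X$, at which every AC divisor is Cartier by definition; the resulting $D'$ lies in $\WSh^*(X)$. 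The main obstacle I anticipate is the Bertini-style step: one has to verify that, after reducing modulo $\m$, the induced $k$-linear evaluations on $\overline V$ remain surjective onto nonzero spaces so that their kernels are proper subspaces of a finite-dimensional $k$-vector space, which is precisely the point where the hypothesis that $k$ is infinite is needed.
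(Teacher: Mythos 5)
Your overall strategy matches the paper's: twist by an ample line bundle to get global generation, use the infinite residue field to find a section avoiding finitely many kernels, and perturb $D$ by the resulting principal divisor. Your write-up is more careful than the paper's in two useful ways: you actually produce a bona fide element of $H^0(X,\mathcal{K}_X^*)$ by dividing two sections $s,t$ of the twisted sheaves (the paper's one-line "after twisting ... we may assume $\mathcal{F}$ is globally generated" glosses over the fact that $\mathcal{F}(n)$ is no longer a subsheaf of $\mathcal{K}_X$), and you work with a finitely generated $\Lambda$-submodule $V$ so that the passage to $\bar V = V/\m V$ has a chance to make sense.

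There are, however, two genuine errors. First, a sign error at the very start: given $f\cdot\sO_{X,p}=\mathcal{F}_p$, the divisor to take is $D'=D+\Div(f)$, not $D-\Div(f)$, since $\sO_X(\Div(f))=f^{-1}\sO_X$ gives $\sO_X(D+\Div(f))_p=f^{-1}\mathcal{F}_p=\sO_{X,p}$, whereas $\sO_X(D-\Div(f))_p=f\mathcal{F}_p=f^2\sO_{X,p}$, which is $\sO_{X,p}$ only when $D$ is already trivial at $p$. (Compare the paper's $D':=D+\Div_X(r)$.) Second, and more seriously, the claimed factorization of the evaluation $V\to\mathcal{F}(n)\otimes\kappa(p)$ through $\bar V$ fails in general: $\m$ annihilates $\kappa(p)$ if and only if $p$ lies over the closed point $\m$ of $\Spec\Lambda$, which need not hold for the points of $\Sigma'$ -- in particular the generic points of $X$ typically lie over a non-maximal prime of $\Lambda$ (e.g.\ already when $\Lambda$ is a discrete valuation ring and $X$ lives over $\mathrm{Frac}(\Lambda)$). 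The conclusion you want can still be reached, but by a different argument: with $K_p:=\ker(V\to\mathcal{F}(n)\otimes\kappa(p))$ a proper $\Lambda$-submodule and $V$ finitely generated over $\Lambda$, Nakayama applied to the nonzero module $V/K_p$ gives $K_p+\m V\subsetneq V$, so the image of $K_p$ in $\bar V$ is a proper $k$-subspace; then the ``finite union of proper subspaces over an infinite field'' step proceeds as you intended. You explicitly flagged this step as the anticipated obstacle; the verification you proposed is the one that fails, and the Nakayama argument is what fills the gap.
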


\begin{proof}
Let $\mathcal{F} \subseteq \mathcal{K}_X$ be a submodule corresponding to $D$.
After twisting $\mathcal{F}$ by an ample line bundle, we may assume that $\mathcal{F}$ is globally generated.
We set $M : = H^0(X, \mathcal{F})$ and $N_x : = \mathrm{Ker} (M \to \mathcal{F}_x \otimes \kappa(x)) \subseteq M$ for every point $x \in \Sigma$.
The global generation of $\mathcal{F}$ yields that $N_x \neq M$.
Taking into account that $k$ is infinite, we have 
\[
\bigcup_{x \in \Sigma} N_x \neq M.
\]
Take an element $r \in M \setminus \bigcup_{x\in \Sigma}N_x$. 
Since $\mathcal{F}$ is Cartier at any $x \in \Sigma$, the support of the AC divisor $D' : = D + \Div_X(r)$ does not contain $x$, as desired.
\end{proof}

\subsection{Grotherndieck-Serre duality}\label{duality}

In this subsection, we use the same convention as in Subsection \ref{Subsection AC-Diff}. 
First we recall the notation of Grotherndieck-Serre duality. 
The reader is referred to \cite[Subsection 2.3]{BST}, \cite[VII Corollary 3.4]{RD} and \cite[Section 3]{Con} for details.

Given two morphisms $f: Y \to X$ and $g: Z \to Y$, there exists a canonical isomorphism of functors
\[
c_{g,f}: (f g)^{!} \xrightarrow{\sim} g^! f^! ,
\]
which is compatible with triple composition (see \cite[VII Corollary 3.4 (VAR1)]{RD}).
When $f$ is the structure morphism $\pi_Y : Y \to S$ of the $S$-scheme $Y$, we write 
\[
c_g : = c_{g, \pi_Y }(\omega_S^{\bullet}): \omega_Z^{\bullet} \xrightarrow{\sim} f^{!}\omega_Y^{\bullet}.
\]
For an open immersion $i: U \hookrightarrow X$, we have an isomorphism $e_i: i^! \xrightarrow{\sim} i^*$ and write
\[
\gamma_i:= e_i \circ c_i : \omega_U^{\bullet} \xrightarrow{\sim} \omega_X^{\bullet}|_U.
\]
Since $e_i$ is compatible with composition (see \cite[VII Corollary 3.4 (VAR3)]{RD}), so is $\gamma_i$, that is, if $j : V \hookrightarrow U$ is an open immersion, then the following diagram commutes.
\begin{equation}\label{open composition diagram}
\vcenter{
\xymatrix{
\omega_V^{\bullet} \ar^-{\gamma_j}[rr] \ar_-{\gamma_{i \circ j}}[dr] & & \omega_U^{\bullet}|_V \ar^-{\gamma_i|_V}[dl]\\
& \omega_X^{\bullet}|_V&
}}
\end{equation}
For a proper morphism $f: Y \to X$, the trace map $Rf_* f^! \to \mathrm{id}$ (see \cite[Subsection 3.4]{Con}) induces the morphism
\[
\xymatrix{
\mathrm{Tr}_f^{\bullet}: Rf_* \omega_{Y}^{\bullet} \ar[r]^-{\sim}_-{Rf_*c_f}& Rf_* f^! \omega_X^{\bullet} \ar[r] & \omega_X^{\bullet}, 
}\]
which we also refer to as the trace map of $f$. 
Then the following properties hold (see \cite[VII Corollary 3.4 (TRA1))]{RD}, \cite[VII Corollary 3.4 (TRA4))]{RD} and \cite[VII Corollary 3.4 (c)]{RD}). 
\begin{enumerate}
\item Trace maps are compatible with composition, that is, if $f: Y \to X$ and $g: Z \to Y$ are proper morphisms, then the following diagram commutes.
\begin{equation}\label{Trace composition diagram}
\vcenter{
\xymatrix{
R (fg)_* \omega_Z^{\bullet} \ar^-{\mathrm{Tr}_{f \circ g}^{\bullet}}[rr] \ar_-{\rotatebox{90}{$\sim$}}[d] && \omega_X^{\bullet} \\
Rf_* R g_* \omega_Z^{\bullet} \ar_-{Rf_* \mathrm{Tr}_g^{\bullet}}[rr] && Rf_* \omega_Y^{\bullet} \ar_-{\mathrm{Tr}_f^{\bullet}}[u]
}}
\end{equation}

\item Trace maps are compatible with the base change by an open immersion, that is, if 
\[
\xymatrix{
Y \ar^-{f}[r] & X \\
V \ar@{^{(}->}[u]^-{j} \ar_-{g}[r] & U \ar@{^{(}->}[u]_-{i}
}\]
is a cartesian diagram with $f$ proper and $i$ an open immersion, then the diagram
\begin{equation}\label{Trace open diagram}
\vcenter{
\xymatrix{
i^* Rf_* \omega_Y^{\bullet} \ar_-{\rotatebox{90}{$\sim$}}[d] \ar^-{i^* \mathrm{Tr}_f^{\bullet}}[rrr] &&& i^* \omega_X^{\bullet} \\
Rg_* j^*\omega_Y^{\bullet}   \ar^{\sim}_-{(Rg_*\gamma_j)^{-1}}[rr] && Rg_* \omega_{V}^{\bullet} \ar_-{\mathrm{Tr}_g^{\bullet}}[r] & \omega_U^{\bullet} \ar_-{\rotatebox{90}{$\sim$}}^-{\gamma_i}[u]
}}
\end{equation}
commutes.

\item Given a proper morphism $f: Y \to X$ and a coherent sheaf $\mathcal{F}$ on $Y$, the morphism
\[
Rf_*R\mathcal{H}om_Y( \mathcal{F}, \omega_Y^{\bullet}) \to  R\mathcal{H}om_X(Rf_*\mathcal{F}, Rf_* \omega_Y^{\bullet}) \xrightarrow{\mathrm{Tr}_f^{\bullet}} R\mathcal{H}om_X(Rf_*\mathcal{F},  \omega_X^{\bullet})
\]
induced by the trace map $\mathrm{Tr}_f^{\bullet}$ of $f$ is an isomorphism.
\end{enumerate}

Let $f: Y \to X$ be a finite surjective morphism.
Since we have the isomorphisms 
\[
\omega_Y^{\bullet} \cong f^! \omega_X^{\bullet} \cong R\mathcal{H}om_X(f_* \sO_Y, \omega_X^{\bullet}),
\]
if the canonical sheaf $\omega_X$ associated to $\omega_X^{\bullet}$ is the $-d$-th cohomology $h^{-d}(\omega_X^{\bullet})$, then the canonical sheaf $\omega_Y$ associated to $\omega_Y^{\bullet}$ is also the $-d$-th cohomology $h^{-d}(\omega_Y^{\bullet})$. 
Therefore, the trace map $\mathrm{Tr}_f^{\bullet}$ induces the morphism of sheaves 
\[
\mathrm{Tr}_f : f_* \omega_Y \to \omega_X, 
\]
which we also refer to as the \emph{trace map} of $f$.

Let $X$ be a Cohen-Macaulay scheme with $\Supp \omega_X= X$ and $Z \subseteq X$ be an effective Cartier divisor, that is, a closed subscheme whose ideal sheaf is invertible.
Then the exact sequence
\[
0 \to \sO_X(-Z) \to \sO_X \to i_* \sO_Z \to 0
\]
induces a morphism
\[
\omega_X^{\bullet}(Z) \cong R\mathcal{H}om_X(\sO_X(-Z),  \omega_X^{\bullet}) \to
R\mathcal{H}om_X(i_* \sO_Z, \omega_X^{\bullet})[1] \cong 
i_* \omega_Z^{\bullet}[1],
\]
where $i: Z \hookrightarrow X$ denotes the closed immersion.
Noting that $\omega_X^{\bullet} \cong \omega_X [d]$ and $\omega_Z^{\bullet} \cong \omega_Z [d-1]$ for some integer $d$, we have a morphism of sheaves  
\[
\omega_X(Z) \to i_* \omega_Z.
\]
Let $\mathrm{Res}_{X/Z} : \omega_X(Z)|_Z \to \omega_Z$ be the adjoint morphism and we call it the \emph{Poincar\'{e} residue map}.
Making use of the diagrams \eqref{open composition diagram} and \eqref{Trace composition diagram}, we have the following properties of Poincar\'{e} residue maps. 
\begin{enumerate}
\item Poincar\'{e} residue maps are compatible with open immersions, that is, if 
\[\xymatrix{
X^{\circ} \ar@{^{(}->}^-{u}[r] & X \\
Z^{\circ} \ar@{^{(}->}^-{v}[r] \ar@{^{(}->}^-{j}[u] & Z \ar@{^{(}->}^-{i}[u]
}\]
is a commutative diagram where  $u$ and $v$ are open immersions and $j$ is a closed immersion, then the following diagram commutes.
\begin{equation}\label{Res open}
\vcenter{
\xymatrix{
v^*(\omega_X(Z)|_Z) \ar^-{\sim}[r] \ar^-{v^* \mathrm{Res}_{X/Z}}[d] & (u^*\omega_X(Z))|_{Z^\circ} \ar^-{\gamma_u^{-1}(Z|_{X^{\circ}})|_{Z^{\circ}}}[rr] & & \omega_{X^{\circ}}(Z^{\circ})|_{Z^{\circ}} \ar^-{\mathrm{Res}_{X^{\circ}/Z^{\circ}}}[d] \\
v^* \omega_Z \ar^-{\gamma_v^{-1}}[rrr] &  & & \omega_{Z^{\circ}}
}}
\end{equation}

\item Poincar\'{e} residue maps are compatible with trace maps, that is, if 
\[
\xymatrix{
Y \ar^-{f}[r]  & X \\
W \ar^-{g}[r] \ar@{^{(}->}^-{j}[u] & Z \ar@{^{(}->}_-{i}[u]
}
\]
is a commutative diagram where $Y$ is another Cohen-Macaulay scheme, $W \subseteq Y$ is an effective Cartier divisor, and $f: Y \to X$ and $g: W \to Z$ are finite surjective morphisms, 
then the following diagram commutes. 
\begin{equation}\label{Trace and Residue}
\vcenter{
\xymatrix{
(f_* \omega_{Y}(W))|_{Z} \ar^-{\mathrm{Tr}_{f}|_Z}[d] \ar^-{\mathrm{nat}}[r]  &  g_* (\omega_{Y}(W)|_{W}) \ar^-{g_* \mathrm{Res}_{Y/W}}[rr]  & & g_* \omega_{W} \ar^-{\mathrm{Tr}_{g}}[d] \\
\omega_X(Z)|_Z  \ar^-{\mathrm{Res}_{X/Z}}[rrr] &&& \omega_Z 
}}\end{equation}
\end{enumerate}

\begin{lem}\label{residue isom}
Let $X$ be a Cohen-Macaulay scheme with $\Supp \omega_X=X$ and $Z \subseteq X$ be an effective Cartier divisor.
Then the Poincar\'{e} residue map 
\[
\mathrm{Res}_{X/Z} : \omega_X(Z)|_Z \to \omega_Z
\]
is surjective.
Moreover, if $Z$ is reduced, then it is an isomorphism.
\end{lem}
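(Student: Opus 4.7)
The plan is to extract the Poincar\'e residue from the distinguished triangle
\[
i_*\omega_Z^{\bullet} \to \omega_X^{\bullet} \to \omega_X^{\bullet}(Z) \to i_*\omega_Z^{\bullet}[1]
\]
produced, as indicated in the paragraphs preceding the lemma, by applying $R\mathcal{H}om_{\sO_X}(-,\omega_X^{\bullet})$ to the short exact sequence $0 \to \sO_X(-Z) \to \sO_X \to i_*\sO_Z \to 0$. The first step is to check that, under the Cohen--Macaulay hypothesis together with $\Supp(\omega_X)=X$, the complex $\omega_X^{\bullet}$ is locally concentrated in a single cohomological degree, say $\omega_X^{\bullet}\simeq\omega_X[d]$. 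Since $Z$ is an effective Cartier divisor in $X$, the closed subscheme $Z$ is itself Cohen--Macaulay, and the standard isomorphism $i^{!}\sO_X \simeq \sO_X(Z)|_Z[-1]$ (Grothendieck duality for the regular immersion $i$) shows that $\omega_Z^{\bullet}$ is locally concentrated in degree $-(d-1)$; in particular $\Supp(\omega_Z)=Z$. The distinguished triangle therefore collapses to a short exact sequence of coherent sheaves
\[
0 \to \omega_X \to \omega_X(Z) \to i_*\omega_Z \to 0.
\]

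For surjectivity, I would tensor this sequence with $\sO_Z$ over $\sO_X$; since the tensor product is right exact, the result is an exact sequence
\[
\omega_X|_Z \to \omega_X(Z)|_Z \to \omega_Z \to 0,
\]
and by the very construction of $\mathrm{Res}_{X/Z}$ as the $i^*$-$i_*$ adjoint of the quotient $\omega_X(Z)\twoheadrightarrow i_*\omega_Z$, the second arrow is precisely the Poincar\'e residue, giving the claimed surjectivity.

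For the isomorphism assertion when $Z$ is reduced, it suffices to show that the first arrow in the displayed sequence is zero. The question is local on $X$: picking a local equation $t\in\sO_X$ for $Z$, the inclusion $\sO_X\hookrightarrow\sO_X(Z)=t^{-1}\sO_X$ is identified with multiplication by $t$ once we use the generators $1$ and $t^{-1}$ on the two sides. Tensoring with $\sO_Z=\sO_X/(t)$ turns this into multiplication by $t=0$, hence the zero map, and tensoring further with $\omega_X$ keeps it zero. Therefore $\omega_X|_Z\to\omega_X(Z)|_Z$ vanishes and $\mathrm{Res}_{X/Z}$ is injective.

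The main obstacle is the first step, the collapse of the distinguished triangle to a genuine short exact sequence of sheaves; this is where one must carefully use that both $\omega_X^{\bullet}$ and $\omega_Z^{\bullet}$ are locally concentrated in a single cohomological degree, a fact that rests on the CM hypothesis and the transfer of $\Supp(\omega_X)=X$ to $\Supp(\omega_Z)=Z$ via $\omega_Z\cong\omega_X(Z)|_Z$. I note incidentally that the local computation used for injectivity does not in fact require $Z$ to be reduced, so the argument above yields the stronger statement that $\mathrm{Res}_{X/Z}$ is an isomorphism under the stated Cohen--Macaulay hypotheses with no reducedness assumption on $Z$.
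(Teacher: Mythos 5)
Your proof is correct, and it departs from the paper's argument in a meaningful way on the second assertion. For surjectivity the two arguments are essentially the same: both use the distinguished triangle obtained by applying $R\mathcal{H}om_{\sO_X}(-,\omega_X^{\bullet})$ to $0\to\sO_X(-Z)\to\sO_X\to i_*\sO_Z\to 0$ and the Cohen--Macaulay hypothesis to kill the next cohomology term; you package this as a short exact sequence $0\to\omega_X\to\omega_X(Z)\to i_*\omega_Z\to 0$ and then invoke right-exactness of $-\otimes_{\sO_X}\sO_Z$, which yields the same conclusion.

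The genuine difference is in the isomorphism assertion. The paper's proof uses that $\omega_X(Z)|_Z$ is $(S_1)$ to reduce injectivity to the generic points of $Z$, then invokes reducedness of $Z$ to argue that, locally at such a point, both $X$ and $Z$ are regular, so a surjection of invertible sheaves is an isomorphism. Your argument instead observes that the first map $\omega_X\otimes_{\sO_X}\sO_Z\to\omega_X(Z)\otimes_{\sO_X}\sO_Z$ in the right-exact sequence is identically zero, because $\sO_X\hookrightarrow\sO_X(Z)$ is locally multiplication by a defining equation $t$ of $Z$ (after trivializing $\sO_X(Z)$ by the generator $t^{-1}$), hence vanishes after reduction mod $t$. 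This gives injectivity directly, and, as you note, it uses neither the $(S_1)$ reduction nor the reducedness of $Z$. Your observation that the conclusion therefore holds for an arbitrary effective Cartier divisor $Z$ in a Cohen--Macaulay $X$ with $\Supp(\omega_X)=X$ is correct; this is just the usual adjunction isomorphism $\omega_Z\cong\omega_X(Z)|_Z$ for a regular closed immersion, which never requires reducedness. The paper's extra hypothesis is harmless for the applications (it is only used when $Z$ is a reduced divisor) but is indeed not needed, and your proof is the cleaner one.

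One small stylistic note: when you trivialize $\sO_X(Z)$ using $t^{-1}$ and identify the inclusion $\sO_X\hookrightarrow\sO_X(Z)$ with multiplication by $t$, it is worth saying explicitly that the resulting description is independent of the choice of local equation (any two differ by a unit), so that the vanishing mod $t$ is intrinsic; this makes the sheaf-theoretic step airtight.
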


\begin{proof}
The distinguished triangle 
\[
R\mathcal{H}om_X(i_* \sO_Z, \omega_X^{\bullet}) \to R\mathcal{H}om_X(\sO_X, \omega_X^{\bullet}) \to R\mathcal{H}om_X(\sO_X(-Z), \omega_X^{\bullet}) \xrightarrow{+1}
\]
yields the exact sequence
\[
\omega_X(Z) \to i_* \omega_Z \to \mathcal{E}xt_X^1(\sO_X, \omega_X)=0.
\]
Therefore, the morphism $\omega_X(Z) \to i_*\omega_Z$ is surjective, which proves the first assertion.

For the second assertion, it suffices to show that the residue map $\mathrm{Res}_{X/Z}$ is injective at any generic points of $Z$, because $\omega_X(Z)|_Z$ satisfies $(S_1)$. 
Since $Z$ is reduced, after shrinking $X$ and $Z$, we may assume that $X$ and $Z$ are both regular.
Then $\omega_X(Z)$ and $\omega_Z$ are invertible sheaves and every surjective morphism of invertible sheaves is an isomorphism.  
\end{proof}


\end{document}